\newtheorem{theorem}{Theorem}
\newtheorem{corollary}[theorem]{Corollary}
\newtheorem{definition}[theorem]{Definition}
\newtheorem{proposition}[theorem]{Proposition}
\newtheorem{lemma}[theorem]{Lemma}
\newtheorem{remark}{Remark}
\newcommand{\norm}[1]{\left\Vert#1\right\Vert}
\newcommand{\inpd}[2]{\left\langle #1, #2 \right\rangle}
\newcommand{\abs}[1]{\left\vert#1\right\vert}
\newcommand{\set}[1]{\left\{#1\right\}}
\newcommand{\wt}[1]{\widetilde{#1}}
\newcommand{\tM}{{\wt{M}}}
\newcommand{\tJ}{\wt{J}}
\newcommand{\tA}{\wt{A}}
\newcommand{\Real}{\mathbb {R}}
\newcommand{\tr}{\textsf{T}}
\newcommand{\eps}{\varepsilon}
\renewcommand{\d}{\ensuremath{\mathrm{d}}}
\newcommand{\dt}{ \ensuremath{\mathrm{d} t } }
\newcommand{\ds}{ \ensuremath{\mathrm{d} s } }
\newcommand{\dx}{ \ensuremath{\mathrm{d} x} }
\newcommand{\tomega}{\wt{\Omega}}
\newcommand{\Hmt}{\mathbb{H}}
\newcommand{\T}{\mathcal{T}}
\newcommand{\A}{\mathcal{A}}
\def\af {action functional }
\newcommand{\FW} {Freidlin-Wentzell }
\newcommand{\ldp} {large deviation principle}
\newcommand{\qp}{quasi-potential }
\newcommand{\yu}[1]{{\color{purple}{#1}}}
\begin{document}
\begin{center}
{\bf \Large Quasi-potential Calculation and
Minimum Action Method for  Limit Cycle }

~
\par
~
\par
Ling Lin\footnote{email: linling27@mail.sysu.edu.cn.}
\par
School of Mathematics
\par
Sun Yat-sen University
\par
Guangzhou,   510275,  China

\par
\
\par
 Haijun Yu \footnote{email: hyu@lsec.cc.ac.cn.
 The research of H. Yu was supported by
 NNSFC Grant 11771439, 91530322 and Science Challenge Project No.~TZ2018001.}
 \par
School of Mathematical Sciences,
University of Chinese Academy of Sciences\\
NCMIS \& LSEC, Institute of
Computational Mathematics and Scientific/Engineering
Computing, Academy of Mathematics and Systems
Science, Beijing 100190, China

\par
\
\par

Xiang Zhou\footnote{email: xiang.zhou@city.edu.hk.
The research of XZ  was supported by the grants from the Research Grants Council of the Hong Kong Special Administrative Region, China 
(Project No. CityU 109113 and 11304715).  }%
\par
Department of Mathematics
\par
City University of Hong Kong\par
Tat Chee Ave, Kowloon \par Hong Kong SAR

\end{center}

 
\section*{Abstract}
  We study  the noise-induced escape from a stable limit
  cycle of a  non-gradient dynamical system driven by a
  small additive noise.
  The  fact that the optimal transition path in this case is infinitely long
  imposes a severe numerical challenge to resolve it in  the minimum action method.
We first consider the landscape of the
  quasi-potential near the limit cycle, which  characterizes
   the minimal cost of the noise to drive the
  system far away form the limit cycle.    We derive and compute the quadratic
  approximation of this \qp near the limit cycle in the form
  of a positive definite solution to a matrix-valued
  periodic Riccati differential equation on the limit cycle.
  We then combine this local approximation in the neighbourhood of the limit cycle with the
   minimum action method applied outside of the neighbourhood.
   The neighbourhood size is selected to be compatible with the   path discretization error.  
   By several numerical examples,
  we show that this  strategy  effectively  improve the minimum action
  method to compute the {spiral} optimal
  escape path from {limit cycles} in
{various systems}.

{{\bf Keywords}: rare event,  non-gradient system, quasi-potential, limit cycle,  minimum action method
}
\

{{\bf  Mathematics Subject Classification (2010)}  Primary 65K05, Secondary  82B05	  }

\section{Introduction}

Many physical and biological systems exhibit sustainable
oscillating dynamics and are altered by random external
perturbations simultaneously.  To understand the long term
impact of noise on the stable oscillations in a
deterministic dynamics is an important question.  We here
consider a continuous-time dynamical system exhibiting a
stable limit cycle, subject to the additive random
perturbation in the small noise limit.  The model is the
following Ito stochastic differential equation in $\Real^d$:
\begin{equation}\label{SDE}
\dx_t = b(x_t)\dt + \sqrt{\eps}\sigma(x_t)\d w_t, ~~\quad \eps \ll1,
\end{equation}
where $b: \Real^d \to \Real^d$ is a smooth drift vector
field, $w_t$ is the standard $\Real^{d}$-valued Wiener
  process, $\sigma: \Real^d \to \Real^d$ is a  matrix-valued function
  and the positive semidefinite matrix
  $a(x):=\sigma(x) \sigma (x)^\tr$ is usually known as the
  diffusion tensor.  We are concerned with the case that
the deterministic dynamical system $\dot{x}=b(x)$ has a
stable limit cycle $\Gamma$.  This model of noisy perturbed
stable oscillations has attracted many interests in the
areas of nonlinear oscillators in biology, synchronization
of neural network dynamics, fluid dynamics and so on\cite{KuramotoBook,McCbook1989,Bressloff2014stocell,WanYE_NS2dMAM_2015,WanY_NS2dtMAM_2017,
PRL2018MAPOsc}.
The central question in concern  is how the stochastic
trajectory of \eqref{SDE} is driven far away from $\Gamma$
due to the long time effect of the noise.  Such
non-equilibrium behaviors correspond to many important rare
events and the stability problems of the stochastic systems.

It is a classic problem on the exit   from a domain
in a non-equilibrium system and there are many analytical and experimental  studies in physics literature on this topic.  
The  asymptotic analysis works   \cite{MatSchuss1982, Day1993,Maier1996PRL,Berglund2004}
have focused on the exit problem where the basin boundary is a closed curve or an unstable limit cycle.
  For the case
considered here on the noise-induced escape from {  stable} limit cycles,
 \cite{Holland1978} and \cite{Kurrer1991} studied the 
the invariant measure near the
stable limit cycle in small noise intensity limit.
With the numerical experiments,    \cite{McC2005} is concerned with
the optimal trajectories in stochastic continuous dynamical systems
and maps.  The approach \cite{McC2005} is to study the activation energy
by solving the underlying Hamiltonian system, which is equivalent in mathematics to
the quasi-potential in our approach here based on the \FW
\ldp.

The large deviation theory provides a useful tool for  the 
noise-induced problems for \eqref{SDE}  in the asymptotic regime of small noise.
  The mathematical theory of \FW
\ldp\cite{FW2012} states that the most probable trajectory
of \eqref{SDE} between two given points $a_1$ and $a_2$ is
the minimizer of the \FW action functional:  the minimizer is called the minimum action path
(MAP) \cite{weinan-MAM2004} and the minimum value of the
action is the so called {\it quasi-potential}.  This theory is also applicable to the
transitions from a  {compact invariant} set $K_1$ to
another  set $K_2$.  The
quasi-potential landscape, denoted as $V(x)$, takes the zero
value at $K_1$ and increases its value away from $K_1$,
intuitively depicting the cost that the noise has to pay to
drive the system to reach the target point. 
So, the quasi-potential is
an important  {quantity}  describing  the landscape
 of the minimal  action for the escape from    $K_1$. 
In the case that $K_1$
is a stable limit cycle, the level set of the quasi-potential  near this limit cycle  is very useful to understand the
effect of noise in driving the periodic system \eqref{SDE}
in the long run time.

It is shown that $V$ satisfies a Hamilton-Jacobi equation \cite{FW2012}.  For planar problems $d=2$,
one can numerically solve this Hamilton-Jacobi equation with
suitable numerical schemes \cite{Cameron:2012physicaD}.  In
general, this problem has to be  solved by the variational approach
(the least action principle) rather than the PDE approach.
The minimum action method (MAM) and its variants
\cite{weinan-MAM2004,aMAM2008,Heymann2006,Heyman2008,Wan2011}
have been developed to directly calculate the minimum action path
(MAP).  In practice, the MAM works on a path with two
 {fixed} endpoints, so it naturally fits the situation
where both $K_1$ and $K_2$ are singletons.  But when $K_1$
is a continuum set, for instance, a limit cycle $\Gamma$,
then the challenge for the transition from $K_1$ to $K_2$ is
that every point in $K_1$ is equally important since the \qp
is zero everywhere inside $K_1$, which usually implies that 
the actual MAP may have an infinite length. 
This key fact can   also be observed from the  underlying Hamiltonian flow. 
The extremal path, together with its  corresponding  momentum
part, satisfies a Hamiltonian flow.
When the momentum term in the Hamiltonian flow vanishes,
one has a flow identical to the original dynamics $\dot{x}=b(x)$.
So the stable set $K_1$ in the original dynamics becomes the $\alpha$-limit set of the extremal {\it escape} path with non-vanishing momentum.
 This   suggests that the
optimal exit path emitting from the limit cycle has an
infinite arc length as $t\to -\infty$.

The numerical challenge in practical computation is that 
it is not possible to resolve an
infinitely long path perfectly.  The previous study
on the Kuramoto-Sivashinsky equation in 
  \cite{KS-WZE2009} is to select an arbitrary point on
the travelling wave and use the transition from this point
to approximate the transition from the travelling wave.
This approach works reasonably well  if the main purpose is
to explore the high dimensional phase space rather than a
pursuit of the precise values of   quasi-potential, since the
accuracy of the path deteriorates  critically  only near the
limit cycle.  In addition, the minimum action obtained in
this way depends crucially on the initial guess in the
optimization: the more loops around the limit cycle in the
initial path, the better accuracy of the numerical results,
but the length of the numerical path becomes longer and
longer.

In this paper, we propose a new computational strategy  
of the MAM
to adaptively compute the MAP from the limit cycle.
Our new method is based on the explicit form of the 
quadratic approximation of 
  the quasi-potential near the limit
cycle. To this end, we construct a small tube around the
limit cycle, selected by a given numerical tolerance compatible with the 
discretization of the path. The MAM is only applied to the
outside of this tube and the true path spiralling outward
with infinite length is truncated to have a finite length
with a new initial point confined  {on the surface
  of} the tube.  To construct the analytic form of the \qp
in the tubular neighborhood of the limit cycle, the \qp is
approximated by a quadratic form with a $(d-1)\times(d-1)$
positive definite matrix $G$ along the limit cycle.  $G$ is
computed  by solving a periodic Riccati differential equation
(PRDE), which is not a challenging numerical problem even
for a large dimension $d$.  The existence-and-uniqueness condition of
the positive definite solution to the PRDE is shown to be
closely connected to the linear stability of the limit cycle
and the non-degeneracy of the diffusion tensor.  If $d=2$,
we have the analytic solution of $G$ explicitly.  The
eigenvalues of $G$ along the limit cycle describe the
varying widths of the tubular level set of the
quasi-potential; the eigenvectors of $G$ lying in the normal
plane of the limit cycle tell us which direction is more
preferred (or less preferred) for the stochastic trajectory
to depart from the limit cycle.  The asymptotic
approximation of the quasi-potential can also provide
the correct initial values if one want to solve
the Hamiltonian system to calculate the quasi-potential
along the Hamiltonian trajectory.

In the following, Section 2 will review the basics of
several theoretic foundations.  Section 3 derives the
approximation form of the \qp near the limit cycle. Section
4 is devoted to the Riccati matrix differential equations.
Our  main numerical method is presented in Section 5,
followed by several numerical examples in Section 6.  The
last section is our conclusive part.

\section{Review}
\subsection{Quasi-potential
	and minimum action method}

Assume $\gamma(\tau)$ is a   periodic solution of the deterministic dynamics
\begin{equation}\label{eqn:ODE}
\dot{x}(\tau)=b(x)
\end{equation}
with a least period $\T>0$.  Then the trajectory
$\Gamma :=\set{\gamma(\tau): \tau\in [0,~\T]}$ in the phase
space is a limit cycle.  $\Gamma$ is assumed to be stable in
the sense which will be specified later.
Then the SDE   \eqref{SDE} is a random perturbation of \eqref{eqn:ODE}.
We are interested in the \qp for the noise perturbed escape from  this stable limit cycle:
\begin{equation}\label{qp}
V(x):=\inf_{T>0}  \inf_{\phi(-T)\in \Gamma,\phi(T)=x} S_T[\phi],
\end{equation}
where the \FW \af $S_T$ associated with an interval
$[-T,~T]$ is defined by
\begin{equation}\label{action}
S_T[\phi]=\frac12 \int_{-T}^T \norm{\dot{\phi}-b(\phi)}^2_{a(\phi(\tau))} \d\tau,
\end{equation}
for an absolute continuous function $\phi$; otherwise,
$S_T[\phi]=+\infty$.  Here
\[
a:=\sigma\sigma^\tr, ~~\mbox{ and } ~~ \norm{v}_a :=\sqrt{ v^\tr {a}^{-1} v},
~~
\inpd{u}{v}_a:=\inpd{u}{a^{-1}v}=u^\tr a^{-1} v.
\]
\begin{remark}
	If $a$ is not invertible, then  the above \af is modified as follows
	\begin{equation}\label{action2}
	S_T[\phi]=\frac12 \inf_{\sigma u =\dot{\phi}-b(\phi) }\int_{-T}^T  \norm{u}^2  \d\tau.
	\end{equation}
\end{remark}

The \qp $V(x)\geq 0$ and the equality holds on the limit cycle $\Gamma$.
It has  been shown  \cite{FW2012} that $V$
satisfies the Hamilton-Jacobi equation in the basin of attraction of $\Gamma$:
\begin{equation}\label{eqn:H}
\Hmt(x,\nabla V(x))=0,
\end{equation}
where the Hamiltonian
\begin{equation}
\label{def:H}
\Hmt(x,p):=\inpd{b(x)}{p}+\frac12\inpd{p}{a(x)p}.
\end{equation}
The extremal path of the
variational problem \eqref{qp} satisfies the canonical equations
of the
Hamiltonian   system:
\begin{equation} \label{eqn:HODE}
\begin{cases}
\dot{\phi}= \Hmt_p(\phi,p)=b(\phi)+a(\phi)p,\\
\dot{p}=-\Hmt_x(\phi,p) = - (\partial_x b(\phi))^\tr p-\frac12\partial_x\inpd{p}{ a(\phi)p}.
\end{cases}
\end{equation}
where $[\partial_x b(x)]_{ij} =
\left [\partial_{ x_j } b_i (x) \right]$ is the Jacobian matrix of the vector field $b$.
Then the quasi-potential
along the extremal path
$\phi$ can be calculated by
\[
V(\phi(t)) = \frac12 \int_0^t
\norm{a(\phi(t'))p(t')}^2_a\dt'
=\frac12 \int_0^t  \inpd{p(t')}{a(\phi(t')) p(t')}\dt'.
\]
If  caustic arises, then multiple extremal
paths may intersect at some points, and the true value of
the \qp $V$ at these focusing points is the minimum
of the multiple values arising from the multiple extremal paths.

For the transition path escaping the limit cycle $\Gamma$,
the initial condition of \eqref{eqn:HODE} should be imposed
at $t\to -\infty$ as follows:
$\underset{t\to-\infty}\lim \mbox{dist}(\phi(t),\Gamma) \to 0
$ and $  \underset{t\to-\infty}\lim p(t)=0.
$
``$\mbox{dist}$'' is the Hausdorff distance between two
sets.  But
$\Gamma \cap \set{\phi(t)}=\emptyset$ for any $t\in \Real$.
In fact, the extremal path winds around the limit cycle and
follows the same rotation direction as $\Gamma$.  This means
that the extremal path has an infinite length.

The geometric \af   \cite{Heymann2006} is based on
the Maupertuis's principle \cite{Landau-Lifshitz-Mechanics}
and written in terms of an arbitrarily parametrized
geometric curve:
\begin{equation}
\hat{S}[\varphi] = \int_{\varphi}  \inpd{p}{\d \varphi}
=\int_{\varphi}  \inpd{(\dot{\phi}-b(\phi))}{\d \varphi}_a
\end{equation}
where the curve $\varphi$ is a geometric description for the
time variable function $\phi(t)$.  The momentum
$p(t)=a(\phi(t))^{-1} \left( \dot{\phi}-b(\phi) \right)$ is
related to the \qp by $ p(t)=\nabla V( \phi(t)).$ Here
$\dot{\phi}(t)$ is the time derivative of $\phi$, which has
to be provided additionally in $\hat{S}$.  If the path
$\varphi$ has a finite length $L$, then it can be
parametrized by its arc-length $\varphi(s), -L\leq s\leq 0$.
The path then has two parametrized forms: $\phi(t)$ and
$\varphi(s)$.  The optimal change-of-variable between time
$t$ and arc-length $s$ is obtained by Maupertuis's
principle. Equivalently, the result corresponds to the
zero-Hamiltonian property, $\Hmt(\varphi,p)=0$, which is
further equivalent to the important
identity $$\|{\dot{\phi}\| }_a=\norm{b(\phi)}_a.$$ So,
$\ds /\dt =\frac{\norm{b}_a}{\norm{\varphi'}_a}$, where
$\varphi'(s)$ is the derivative of $\varphi$ parametrized by
the arc-length parameter $s$.  The geometric minimum action
method \cite{Heymann2006} (gMAM)  is based on this new
variational problem of minimizing $\hat{S}$.

The \qp defined in \eqref{qp} then is equivalent to
\begin{equation}\label{gqp}
V(x) = \inf_{ L>0}\inf_{{\varphi\in AC[-L,~0]}\atop{\varphi(-L)\in \Gamma, \varphi(0)=x}} \hat{S}[\varphi],
\end{equation}
where $AC[-L,~0]$ denotes the space of absolutely continuous
functions on $[-L, ~0]$.  As noted above, since the set
$\Gamma$ is a limit cycle, the arc length $L$ of the optimal
path is infinite.  In numerical computation based on
\eqref{qp} or \eqref{gqp}, the path has to be truncated by a
finite value of either time $T$ or arc length $L$.  Thus,
the gMAM inevitably produces truncation errors as any other
version of MAM .  The tMAM
\cite{WantMAM2015,WanY_NS2dtMAM_2017,WanYZ_tMAMconv_2017}
strives to adaptively match the truncation errors with the
numerical optimization errors by
 determining  a larger and larger $T$ as the
resolution of the path is finer and finer.  This method
works quite remarkably in practice and we think the same
idea can also be applied to the gMAM for the case of
infinite arc length $L$.  But the numerical stiffness
increases with the interval length $T$ (or $L$).  Our idea
here to avoid this stiffness due to infinite long interval
length is to find an approximation to the \qp within a tube
$\set{x: \mbox{dist}(x,\Gamma) \leq \delta}$ and the
numerical computation is only applied to the outside of this
tube.

\subsection{Quadratic approximation of the quasi-potential at stationary points.}
The asymptotic idea has been implemented to analyze the
Hamiltonian flow \eqref{eqn:HODE} around stationary points.
We briefly review this result \cite{BouchetPert2016} on the
approximation of the \qp at a stable stationary point.   
Assume that the linearized dynamics at a stationary point
$x_*$ (not necessarily stable) of $b$ is $\dot{x}= J x$
where $J$ is the Jacobian matrix $\partial_x b$ at $x_*$.
$J$ is assumed to be non-degenerated.  Assume that
$V(x)=\frac12 x^\tr A x$, where $A$ is a symmetric matrix to
be determined.  Then the zero-Hamiltonian condition
$\Hmt(x, \nabla V)=0$ and \eqref{eqn:HODE} lead to the
matrix equation $AJ+J^\tr A+ {A}{aA}=0$.  The unique
positive definite matrix solution $A$ satisfies
$A^{-1}= \int_0^\infty e^{tJ} a e^{tJ^\tr} \dt$ (where $a$
is valued at $x_*$).  The tangent flow of the Hamiltonian
system \eqref{eqn:HODE} in $\Real^{2d}$ is then in the
following two subspaces: $ (\dot{x},\dot{p}) = (Jx, 0) $ and
$(\dot{x},\dot{p} ) = (-(J+A)x, -J^\tr Ax) $.  If $J$
corresponds to a stable fixed point (all eigenvalues have
negative real parts), then $A$ is positive definite.

The   generalization of  this asymptotic  analysis
to the setting for a stable limit cycle
is more complicated than this fixed point case.
We first need  set up some local  curvilinear coordinates
around the limit cycle
by using a moving affine frame along the limit cycle.

\subsection{Linear stability of limit cycle}
\label{ssec:LSLL}
We review some classic concepts for asymptotic
stability of periodic ordinary differential equations in the
Floquet theory \cite{Coddington1955}. Some of them will be used later.
\begin{definition}\label{def:Phi}
  Let $M(\cdot)\in\Real^{n\times n}$ be a $\T$-periodic
  matrix-valued continuous function.  Fix an initial time
  $\tau_0$.  $\Phi_M(\tau,\tau_0)$ is called the state
  transition matrix associated with ${M}(\cdot)$ if it
  solves the periodic matrix differential equation
  \begin{equation} \label{eqn:Phi}
	\begin{cases}
      \dfrac{\partial }{\partial \tau}\Phi_{M}(\tau,\tau_0)={M}(\tau)\Phi_{M}(\tau,\tau_0),\\
      \Phi_{M}(\tau_0,\tau_0)=I,
	\end{cases}
  \end{equation}
  where $I$ is the identity matrix.  The monodromy matrix at
  time $\tau$ is defined as
  \begin{equation} \label{eqn:monodromy}
    \bar{\Phi}_M(\tau):=\Phi_M(\tau+\T,\tau).
  \end{equation}
  The eigenvalues of the monodromy matrix
  $\bar{\Phi}_M(\tau)$ are independent of $\tau$ because
  $\bar{\Phi}_M(\tau)=\Phi_M(\tau,0)\bar{\Phi}_M(0)\Phi_M^{-1}(\tau,0)$.
  The eigenvalues of the monodromy matrix
  $\bar{\Phi}_M(\tau)$ are called the characteristic
  multipliers for the linear $\T$-periodic system
  $\dot{y}(\tau)=M(\tau)y(\tau)$, or simply the
  characteristic multipliers of $M(\cdot)$.
\end{definition}

Recall that $\gamma(\tau)$ is a $\T$-periodic solution of
\eqref{eqn:ODE}, i.e.,
\begin{equation}\label{eqn:limit cycle}
  \dot{\gamma}(\tau)=b(\gamma(\tau)).
\end{equation}
Linearization of \eqref{eqn:ODE} around $\gamma(\tau)$ leads
to the following periodic linear system
\begin{equation}\label{eqn:first var}
  \dot{y}=\partial_x b(\gamma(\tau))y.
\end{equation}
By differentiating \eqref{eqn:limit cycle}, we see that
$\dot{\gamma}(\tau)$ is a $\T$-periodic solution of the
equation \eqref{eqn:first var}. It follows that the linear
$\T$-periodic system \eqref{eqn:first var} always has a
characteristic multiplier equal to $1$.  The stability of
the limit cycle is characterized by the other $(d-1)$
characteristic multipliers.

\begin{definition}\label{def:stability of limit cycle}
  We say that the limit cycle $\Gamma$, which is the orbit
  of a $\T$-periodic solution $\gamma(\tau)$ of
  \eqref{eqn:ODE}, is asymptotically stable if, except for
  the trivial characteristic multiplier $1$, the other $d-1$
  characteristic multipliers of \eqref{eqn:first var} lie
  inside the open unit disk in the complex plane.
\end{definition}

The asymptotic stability of the limit cycle defined in
Definition \ref{def:stability of limit cycle} has the
property of asymptotic orbital stability in the sense that
any solution of \eqref{eqn:ODE} which comes near the limit
cycle will tend to the limit cycle as $\tau\to+\infty$.
Refer to Theorem 2.2 of Chapter 13 in the textbook \cite{Coddington1955}.

\subsection{Curvilinear coordinates}
\label{ssec:gradc}
To set up the curvilinear coordinates around the limit cycle
$\Gamma$ in $\Real^d$, we need a moving affine frame along
the limit cycle $\Gamma$ in $\Real^d$, i.e., a collection of
$d$ differentiable $\T$-periodic mappings
$e_i:[0,~\T]\longrightarrow \Real^d$, $0\leq i\leq d-1$,
such that for all $\tau\in[0,~\T]$, the (column) vector set
$\{e_i(\tau):0\leq i\leq d-1\}$ forms a basis of $\Real^d$.
Each $e_i(\tau)$ may be viewed as a vector field along the
limit cycle $\Gamma$.  In addition, we set $e_0(\tau)$ to be
a tangent \yu{unit} vector field along the limit cycle
$\Gamma$, i.e.,
\begin{equation}\label{eqn:e0}
  e_0(\tau)=\lambda(\tau)\dot{\gamma}(\tau),
\end{equation}
where a superior dot denotes differentiation with respect to
$\tau$, and $\lambda(\tau)$ is a $\T$-periodic nonzero
scalar function.  To be well-defined, we need assume that
$\dot{\gamma}(\tau)$ never vanishes for any $\tau$, which
amounts to saying that the vector field $b(x)$ never
vanishes on the limit cycle $\Gamma$.  Given a moving affine
frame $e_i(\tau)$, $0\leq i\leq d-1$, the following
equations for the derivatives hold:
\begin{equation}\label{eqn:general Frenet--Serret}
  \dot{e}_{j}(\tau)=\sum_{i=0}^{d-1}\omega_j^i(\tau)e_{i}(\tau),  \quad \text{for } 0 \leq   j \leq  d-1,
\end{equation}
where
\begin{equation}\label{def:omega}
  \omega_j^i(\tau)=\inpd{e^i(\tau)}{\dot{e}_{j}(\tau)}, \quad \text{for } 0 \leq  i, j \leq  d-1,
\end{equation}
and the (column) vector set $\{e^i(\tau): 0\leq i\leq d-1\}$
is the reciprocal basis for the basis
$\{e_i(\tau): 0\leq i\leq d-1\}$, that is
\begin{equation}\label{eqn:ee}
  \inpd{e^i(\tau)}{ e_j(\tau)}=\delta^i_j:=\begin{cases}
    1,  \quad & \text{if } i=j,\\
    0, & \text{otherwise}.
  \end{cases}
\end{equation}
So, the normal plane of the limit cycle $\Gamma$ is
$P(\tau):=\mbox{span}\set{e^1(\tau),\ldots, e^{d-1}(\tau)} =
(e_0(\tau))^\perp = (\dot{\gamma}(\tau))^\perp$.
Actually this normal plane is the $d-1$ dimensional direct
sum of the generalized left eigenspaces for the nontrivial
eigenvalues (excluding 1) of the monodromy matrix
$\bar{\Phi}_{\partial_x b(\gamma(\cdot))}(\tau)$ associated
with the Jacobian $\partial_x b(\gamma(\tau))$.

Let
$E(\tau)=\begin{bmatrix}e_0(\tau), & \ldots ,&
  e_{d-1}(\tau)\end{bmatrix}$
denote the $d\times d$ matrix whose columns are the vectors
$e_i(\tau)$, $0\leq i\leq d-1$. Then its inverse matrix is
$ E(\tau)^{-1}=\begin{bmatrix} e^0(\tau) & \cdots &
  e^{d-1}(\tau)
\end{bmatrix}^\tr, $
and \eqref{eqn:general Frenet--Serret} can be written in the
matrix form
\begin{equation}\label{eqn:general FS matrix form}
  \dot{E}(\tau)=E(\tau)\Omega(\tau),
\end{equation}
or
\begin{equation}\label{eqn:Omega}
  \Omega(\tau)=E(\tau)^{-1} \dot{E}(\tau).
\end{equation}
where the element in the $(i+1)$-th row and $(j+1)$-th
column of $\Omega(\tau)$ is $\omega^i_j(\tau)$.

\begin{remark} \label{rem:b} If one assumes that the basis
  $\{e_i(\tau): 0\leq i\leq d-1\}$ is an orthonormal basis,
  then $e^i(\tau)=e_i(\tau)^\tr$ and $\abs{e_i}=1$,
  $0\leq i\leq d-1$. It follows that
  $\omega^i_j=\inpd{e_i}{ \dot{e}_j}$ satisfies
	\[
	\omega_i^j(\tau)=-\omega_j^i(\tau), \quad \text{for } 0
    \leq i, j \leq d-1, 0\leq \tau\leq \T,
	\]
	i.e., $\Omega(\tau)$ is antisymmetric.
	
	Assume $\{\gamma(\tau): 0\leq \tau\leq \T\}$ is a curve
    of order $d$, i.e., for all $\tau$, the $k$-th
    derivative $\gamma^{(k)}(\tau)$, $1\leq k\leq d$, are
    linearly independent, then we can construct the moving
    frame $e_i(\tau)$, $0\leq i\leq d-1$ from the
    derivatives of $\gamma(\tau)$ by using the Gram-Schmidt
    orthogonalization process. Consequently, the resulting
    frame $e_i(\tau)$, $0\leq i\leq d-1$ satisfies that for
    every $\tau$, the vector set
    $\{e_i(\tau):0\leq i\leq d-1\}$ forms an orthonormal
    basis in $\Real^d$, and in addition, for all
    $1\leq k\leq d$, the $k$-th derivative
    $\gamma^{(k)}(\tau)$ of $\gamma(\tau)$ lies in the span
    of the first $k$ vectors $e_i(\tau)$, $0\leq i\leq k-1$.
    It then follows that $\Omega(\tau)$ is antisymmetric and
    tridiagonal:
	\[
	\Omega(\tau)= \begin{bmatrix}
	0 &  \omega_1(\tau) & &0 \\
	-\omega_1(\tau) &  \ddots &\ddots & \\
	& \ddots&  0 & \omega_{d-1}(\tau) \\
	0 &  & -\omega_{d-1}(\tau)& 0\\
	\end{bmatrix}.
	\]
	The frame  $\set{e_i(\tau), 0\leq i\leq d-1}$
	constructed in this way  is called the Frenet frame,
	and the corresponding  equation \eqref{eqn:general Frenet--Serret} or
	\eqref{eqn:general FS matrix form}
	is known as the Frenet--Serret formula, and the invariant 
	$\kappa_i(\tau):=\omega_i(\tau)/\abs{\dot{\gamma}(\tau)}$ 
    is called the $i$-th curvature of the curve $\gamma(\tau)$ and can be
	determined by $\set{e_i(\tau):0\leq i\leq d-1}$.
	
\end{remark}

\subsection{Gradient form in terms of
	curvilinear coordinates}
  Now, equipped with an affine frame $e_i(\tau)$,
  $0\leq i\leq d-1$ defined on the limit cycle $\Gamma$ as
  stated in \S \ref{ssec:gradc} (without the requirement of
  orthonormality in Remark \ref{rem:b}), we introduce a set
  of local curvilinear coordinates
\begin{equation}\label{eqn:cc}
  (\tau,z)
  \in [0,~\T)\times \Real^{d-1},\quad
  \yu{z=(z^1,\cdots,z^{d-1})^\tr\in\Real^{d-1}}
\end{equation}
in the tubular neighborhood of the limit cycle $\Gamma$ by
writing a point $x$ in the neighborhood as
\begin{equation}\label{eqn:coord}
  x(\tau,z):=\gamma(\tau)+\sum_{j=1}^{d-1} z^{j} e_{j}(\tau)
  =\gamma(\tau)+ \wt{E}(\tau) z.
\end{equation}
Here the ${d\times(d-1)}$ matrix $\wt{E}$ is
$\wt{E}:=\begin{bmatrix}e_1(\tau), & \ldots ,&
  e_{d-1}(\tau)\end{bmatrix}.$
Differentiating \eqref{eqn:coord} with respect to $\tau$ and
$z_i$ yields
\begin{align*}
  \partial_\tau x&
     =\biggl[\lambda(\tau)^{-1}+\sum_{j=1}^{d-1} z^{j}\omega^{0}_j(\tau) \biggr]e_0(\tau)+\sum_{i,j=1}^{d-1} z^{j} \omega^{i}_j(\tau)e_i(\tau)
  \\
                 &=
                   \left[\lambda(\tau)^{-1}+ \inpd{e^0(\tau)}{\dot{\wt{E}}(\tau)z} \right]
                   e_0(\tau)+ \wt{E}(\tau)\wt{\Omega}(\tau)z,
  \\
  \partial_{z^i} x&=e_{i}(\tau),\quad \text{for } 1\leq i\leq d-1.
\end{align*}
$\tomega(\tau)$ denotes the $(d-1)\times (d-1)$ submatrix of
$\Omega(\tau)$ by deleting the first row and the first
column.
The Jacobian matrix
$\begin{bmatrix}\partial_\tau x, & \partial_{z^1} x,&\cdots
  & \partial_{z^{d-1}} x\end{bmatrix}$
at $(\tau,z=0)$ of the mapping defined by \eqref{eqn:coord}
is the matrix whose columns are the vectors
$\lambda(\tau)^{-1}e_0(\tau)$, $e_1(\tau)$, $\ldots$,
$e_{d-1}(\tau)$, therefore it is non-singular for any
$\tau$. So the mapping defined by \eqref{eqn:coord} is a
local diffeomorphism and gives a differentiable
transformation between the Cartesian coordinates $x$ and the
curvilinear coordinates $(\tau,z)$.  We now can express the
gradient operator $\nabla$ in the curvilinear coordinates. Refer to Proposition \eqref{lem:grad}
in the appendix.

\section{Quadratic Approximation of quasi-potential }
In this section, we study the quadratic approximation of the
\qp $V$ near the limit cycle $\Gamma$.  We shall derive a
periodic Riccati differential equation (PRDE) on the limit
cycle and discuss its theoretic properties and numerical
calculations.

\subsection{Asymptotic analysis}
We choose $\tau$ as the physical time in parametrizing the
limit cycle $\Gamma=\set{\gamma(\tau): 0\leq \tau \leq \T}$.
The other choice, such as the arc-length parametrization,
can be easily transformed from the time-parametrization.
The main idea is to write $V(x)$ in terms of the curvilinear
coordinate $(\tau,z)$ near $\Gamma$ and apply the Taylor
expansion of $V$ around $\Gamma$.

Firstly, we have $V(\tau,z)\geq 0$ and particularly on the
limit cycle $V(\tau,z=0)=0$. Therefore
$\partial_{z^i} V(\tau,0)\equiv 0$,
$\forall~ 0\leq i\leq d-1$.  Consequently, the Taylor
expansion of $V(\tau, z)$ in $z$ at $z=0$ reads
\begin{equation}
  \label{eqn:Vapp}
  \begin{split}
    V(\tau,z)
&=\frac12\sum_{i,j=1}^{d-1}z^iz^jG_{ij}(\tau)+O(\abs{z}^3)=\frac12 {z}^\tr {G(\tau)z}+O(\abs{z}^3),
\end{split}
\end{equation}
where the $(d-1)\times (d-1)$ symmetric matrix $G(\tau)$ is
$G_{ij}(\tau)=\partial^2_{z^iz^j}V(\tau,0)$, which is to be
determined.  Since
\[
\partial_\tau V=\frac12\inpd{z}{\dot{G}(\tau)z}+O(\abs{z}^3)
~~~ \mbox{ and } ~~~
\partial_z V=G(\tau)z+O(\abs{z}^2),
\]
then from Proposition \ref{lem:grad} and
$1/(\lambda^{-1}+z)=\lambda+O(z)$, we have
\begin{equation}\label{eqn:dV}
  \begin{split}
    \nabla V=~&\lambda(\tau)\left(\inpd{z}{\left[\frac12\dot{G}(\tau)-\tomega(\tau)^\tr G(\tau)\right]z}+O(\abs{z}^3)\right)e^0(\tau)\\
    &+\sum_{i=
      1}^{d-1}\left[(G(\tau)z)_{i}+O(\abs{z}^2)\right]e^{i}(\tau).
  \end{split}
\end{equation}

Secondly, we expand the coefficients $b(x)$ and $a(x)$ in
the equation \eqref{SDE} in terms of the curvilinear
coordinates $(\tau,z)$.  For any $x$ in a neighborhood of
$\Gamma$, we write the drift vector
\begin{equation}\label{eqn:b}
  b(x)=b\Bigl(\gamma(\tau)+\sum_{i=1}^{d-1} z^{i} e_i(\tau)\Bigr)=\sum_{i=0}^{d-1} B^i(\tau,z)e_i(\tau),
\end{equation}
where by \eqref{eqn:ee} the coefficients are
$$B^i(\tau,z):=\inpd{e^{i}(\tau)}{   b(x)},  ~~0\leq i\leq d-1.$$
On the limit cycle, we have
$ b (\gamma(\tau)
)=\dot{\gamma}(\tau)=\lambda(\tau)^{-1}e_0(\tau).  $
It follows that $ B^0(\tau,0)=\lambda(\tau)^{-1}$ and
$B^i(\tau,0)=0$, $1\leq i\leq d-1$. In addition,
\begin{equation} \label{eqn:defJ}
  \partial_{z^j}B^i(\tau,0)
  =\inpd{e^i(\tau)}{\partial_x b(\gamma(\tau))e_j(\tau)},
  ~\quad~1\leq i,j\leq d-1.
\end{equation}
We denote the right hand side of \eqref{eqn:defJ} as the
$(i,j)$ entry, $1\leq i,j\leq d-1$, of the $(d-1)$ by
$(d-1)$ matrix $\tJ(\tau)$.  Note that
$\partial_xb(\gamma(\tau))$ is the original Jacobian matrix
evaluated on the limit cycle $\Gamma$, and $\tJ(\tau)$ may
be viewed as the Jacobian matrix restricted in the
$z$-space.  Therefore in the neighborhood of $\Gamma$, we
have the expansion
\begin{align}
  \label{eqn:beta}
  B^0(\tau,z)&=\lambda(\tau)^{-1}+O(\abs{z}),\\
  B^i(\tau,z)&=\sum_{j=1}^{d-1}z^j\partial_{z^j}B^i(\tau,0)+O(\abs{z}^2)\nonumber\\
             &=(\tJ(\tau)z)^i+O(\abs{z}^2),\quad 1\leq i\leq d-1,
               \label{eqn:Bi}
\end{align}

For the diffusion tensor $a(x)$, the approximation is simply
\begin{equation} \label{eqn:a}
  a(x)=a\Bigl(\gamma(\tau)+\sum_{i=1}^{d-1} z^{i}
  e_{i}(\tau)\Bigr)=a(\gamma(\tau))+O(\abs{z}).
\end{equation}

Then by plugging \eqref{eqn:b},
\eqref{eqn:beta},\eqref{eqn:Bi} and \eqref{eqn:a} into the
Hamiltonian in \eqref{def:H}, we have that
\[
\begin{split}
  \Hmt(x,\nabla V)=~&\inpd{b(x)}{\nabla V}+\frac12\inpd{\nabla V}{a(x)\nabla V}\\
  =~&\inpd{z}{\left[\frac12\dot{G}(\tau)-\tomega(\tau)^\tr
      G(\tau)\right]z}
  +\sum_{i=1}^{d-1}(G(\tau)z)_i (\tJ(\tau)z)^i\\
  &+\frac12\sum_{i,j=1}^{d-1} (G(\tau)z)_i \inpd{e^i(\tau)}{a\bigl(\gamma(\tau)\bigr)e^j(\tau)}(G(\tau)z)_j+O(\abs{z}^3)\\
  =~&\inpd{z}{\left[\frac12\dot{G}(\tau)-\tomega(\tau)^\tr
      G(\tau)\right]z}
  +\inpd{G(\tau)z}{\tJ(\tau)z}\\
  &+\frac12\inpd{G(\tau)z}{\tA(\tau)G(\tau)z}+O(\abs{z}^3),
\end{split}
\]
where $\tA$ is the positive definite symmetric matrix whose
element in the $i$-th row and $j$-th column is given by
\[
\inpd{e^i(\tau)}{a\bigl(\gamma(\tau)\bigr)e^j(\tau)},
~~1\leq i, j\leq d-1.
\]
$\tA$ is the restriction of the original diffusion matrix
$a$ in the normal plane $P$.  Hence, by \eqref{eqn:H},
equating the terms with the same order $z^iz^j$ yields the
following periodic Riccati differential equation (PRDE)
\[
\dot{G}(\tau)-\tomega(\tau)^\tr
G(\tau)-G(\tau)\tomega(\tau)+G(\tau)\tJ(\tau)+\tJ(\tau)^{\tr}G(\tau)
+G(\tau)\tA(\tau)G(\tau)=0,
\]
i.e.,
\begin{equation}\label{eqn:Riccati}
  \dot{G}(\tau)=
  -\tM(\tau)^{\tr} G(\tau)-G(\tau)\tM(\tau)
  -G(\tau)\tA(\tau)G(\tau),
\end{equation}
where
\begin{equation}\label{eqn:M}
  \tM(\tau):=\tJ(\tau)-\tomega(\tau),
\end{equation}
is the matrix of size $(d-1)\times (d-1)$.  Note that the
coefficients $\tM(\tau)$ and $\tA(\tau)$ are both
$\T$-periodic.  We need to seek a periodic positive definite
solution $G(\tau)$ to \eqref{eqn:Riccati}.

When $G$ is found, the \qp at a point $(\tau,z)$ then can be
approximated locally by the quadratic form
\[
Q(\tau,z):= \frac12 \inpd{z}{G(\tau)z}
\]
due to \eqref{eqn:Vapp}.  Then the momentum $p=\nabla V$ is
approximate by $\nabla Q$ when $\abs{z}\ll 1$ as follows due
to \eqref{eqn:dV} and Proposition \ref{lem:grad}:
\begin{align}
  p(\tau,z)&=\nabla V(x)\approx \nabla Q\nonumber\\
           &=\frac{\abs{e_0(\tau)}}{\abs{\dot{\gamma}(\tau)}}
             \inpd{z}{\left[\frac12\dot{G}(\tau)-\tomega(\tau) ^\tr G(\tau)\right]z}e^0(\tau)
             +\sum_{i=1}^{d-1} \left[(G(\tau)z)_i\right]e^{i}(\tau).\label{eqn:qapp}
\end{align}
This may serve as the initial condition for the Hamiltonian
system.  We can build the MAM by restricting the initial
point on the contour near the limit cycle
$\set{(\tau,z): Q(\tau,z)= \delta}$ with a small positive
$\delta$.  The details are discussed in Section
\ref{sec:num}.

\section{Periodic Differential Riccati Equation}

This section is devoted to the study of PRDE
\eqref{eqn:Riccati} derived from the previous section.  We
investigate the existence and uniqueness of the positive
definite solution and the relation to the linear stability
of the limit cycle as well as the degeneracy of the
diffusion tensor.

\subsection{Solutions of the Riccati  equation} \label{ssec:R}
We first state a theoretical
result concerning the existence of the positive definite
solution for the Cauchy initial value problem of the PRDE
\eqref{eqn:Riccati}.
\begin{proposition}\label{prop:Riccati init pbm}
  Assume that the initial condition $G(0)=G_0$ is symmetric
  and positive semidefinite.  Then the solution of the PRDE
  \eqref{eqn:Riccati} exists and is symmetric and positive
  semidefinite for all $\tau\geq 0$.  Furthermore, if $G_0$
  is positive definite, then so is $G(\tau)$ for all
  $\tau\geq 0$.
\end{proposition}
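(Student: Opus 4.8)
The plan is to treat \eqref{eqn:Riccati} as an initial value problem for an ODE on the space of $(d-1)\times(d-1)$ matrices and to separate the three assertions---existence, symmetry, and (semi)definiteness---into independent steps, postponing \emph{global} existence to the very end. First I would record local existence and uniqueness: the right-hand side of \eqref{eqn:Riccati} is a quadratic polynomial in the entries of $G$ with $\T$-periodic (hence bounded and continuous) coefficients $\tM$ and $\tA$, so it is locally Lipschitz and Picard--Lindel\"of yields a unique maximal solution $G(\tau)$ on some interval $[0,\tau^*)$. Symmetry is then immediate from uniqueness: transposing \eqref{eqn:Riccati} and using $\tA^\tr=\tA$ shows that $G^\tr$ solves the same equation with the same initial datum $G_0^\tr=G_0$, whence $G(\tau)=G(\tau)^\tr$ throughout $[0,\tau^*)$.

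The heart of the argument is the positivity, for which I would introduce the auxiliary fundamental matrix $\Psi(\tau)$ solving the linear system $\dot\Psi=(\tM+\tA G)\Psi$, $\Psi(0)=I$, which is well defined and invertible on $[0,\tau^*)$ since $\tM+\tA G$ is continuous there. A direct differentiation, using the Riccati equation together with the symmetry of $G$ and $\tA$, makes the $\tM$-terms cancel and collapses to
\[
\frac{d}{d\tau}\bigl(\Psi^\tr G\,\Psi\bigr)=\Psi^\tr\,G\tA G\,\Psi=(\tA^{1/2}G\Psi)^\tr(\tA^{1/2}G\Psi)\succeq 0,
\]
so $\Psi^\tr G\Psi$ is nondecreasing in the Loewner order. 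Integrating from $0$ gives $\Psi(\tau)^\tr G(\tau)\Psi(\tau)\succeq G_0\succeq 0$, and since $\Psi(\tau)$ is invertible, applying the congruence by $\Psi(\tau)^{-1}$ preserves the Loewner sign and yields $G(\tau)\succeq 0$ for all $\tau\in[0,\tau^*)$. The strict version $G_0\succ 0\Rightarrow G(\tau)\succ 0$ then follows verbatim, since congruence by an invertible matrix also preserves positive definiteness.

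Finally I would upgrade local to global existence using the positivity just obtained, which is where the genuine obstacle lies: Riccati equations generically blow up in finite time, so one must exploit the sign of the quadratic term. Once $G(\tau)\succeq 0$ is known, I can estimate the scalar $\operatorname{tr} G$. Differentiating, using $\operatorname{tr}(\tM^\tr G)=\operatorname{tr}(G\tM)$ for symmetric $G$, and discarding the nonpositive term $-\operatorname{tr}(G\tA G)\leq 0$, leaves $\tfrac{d}{d\tau}\operatorname{tr} G\leq 2\,\sup_\tau\|\tM(\tau)\|\,\operatorname{tr} G$, so Gr\"onwall's inequality bounds $\operatorname{tr} G(\tau)$---and hence $\|G(\tau)\|$, as $G\succeq 0$---on every finite interval. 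This rules out blow-up, forcing $\tau^*=\infty$ and closing the argument.

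The delicate point, and the only real obstacle, is the logical ordering: the definiteness step must be carried out first on the maximal interval $[0,\tau^*)$, and only afterwards fed into the Gr\"onwall bound to extend the solution. Attempting to establish global existence before positivity would leave the dissipative term $-G\tA G$ uncontrolled, and the auxiliary matrix $\Psi$ (which depends on $G$) would not be available. I would therefore make explicit that $\Psi$ and all positivity estimates are constructed on $[0,\tau^*)$ and that the continuation argument is invoked only at the end.
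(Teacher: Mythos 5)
Your argument is correct and complete, but it cannot be compared line-by-line with the paper because the paper does not prove this proposition at all: it simply defers to Proposition~1.1 of \cite{Dieci1994}. Your self-contained proof is a valid substitute, and each step checks out: local existence/uniqueness from the polynomial (hence locally Lipschitz) right-hand side; symmetry from uniqueness applied to $G^\tr$ (using $\tA^\tr=\tA$); and the congruence trick with $\dot\Psi=(\tM+\tA G)\Psi$ does produce
\[
\frac{\d}{\d\tau}\bigl(\Psi^\tr G\Psi\bigr)=\Psi^\tr G\tA G\,\Psi\succeq 0
\]
(the $\tM$-terms cancel exactly and $G\tA G=(\tA^{1/2}G)^\tr(\tA^{1/2}G)\succeq0$ holds with no prior sign assumption on $G$, since $\tA\succeq0$ as the restriction of the diffusion tensor, so there is no circularity), whence $G(\tau)=\Psi^{-\tr}\bigl(\Psi^\tr G\Psi\bigr)\Psi^{-1}\succeq \Psi^{-\tr}G_0\Psi^{-1}$ and the semidefinite and definite cases both follow. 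Your insistence on doing positivity \emph{before} global existence is exactly right: the trace estimate $\frac{\d}{\d\tau}\operatorname{tr}G\le 2\sup_\tau\norm{\tM(\tau)}\operatorname{tr}G$ discards $-\operatorname{tr}(G\tA G)\le0$ and uses $\norm{G}\le\operatorname{tr}G$, both of which need $G\succeq0$ on the maximal interval. One alternative worth noting, since the paper sets up the machinery anyway: in the strictly positive definite case one can bypass the congruence argument by observing that $H=G^{-1}$ satisfies the \emph{linear} Lyapunov equation \eqref{eqn:inverse}, whose explicit solution \eqref{eqn:Hsol} is manifestly positive definite for all $\tau$ (being a congruence of $H(0)\succ0$ plus the positive semidefinite integral $W$), so $G=H^{-1}$ exists globally and stays positive definite; however, that shortcut does not cover the merely semidefinite case, for which your monotonicity-under-congruence argument is the right tool.
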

For the proof, refer to Proposition 1.1 in \cite{Dieci1994}.
From Proposition \ref{prop:Riccati init pbm}, we immediately
conclude the following result on periodic positive definite
solutions to the PRDE \eqref{eqn:Riccati}.
\begin{corollary}
  Assume $G(\tau)$ is a $\T$-periodic symmetric solution to
  the PRDE \eqref{eqn:Riccati}. Then $G(\tau)$ is positive
  (semi)definite for all $\tau$ if and only if $G(\tau_0)$
  is positive (semi)definite for some $\tau_0 \in [0,~\T)$.
\end{corollary}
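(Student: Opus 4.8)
The plan is to deduce the corollary directly from Proposition \ref{prop:Riccati init pbm} by combining forward-in-time propagation of (semi)definiteness with the $\T$-periodicity of the given solution $G$. The forward implication is immediate: if $G(\tau)$ is positive (semi)definite for every $\tau$, then in particular it is so at any chosen $\tau_0\in[0,\T)$. So all the content lies in the reverse implication, and for that I would proceed in two stages: first propagate definiteness forward from $\tau_0$ using the proposition, then wrap this forward statement around a full period to cover all $\tau$.

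For the reverse direction, suppose $G(\tau_0)$ is positive semidefinite for some $\tau_0\in[0,\T)$. I would first observe that Proposition \ref{prop:Riccati init pbm}, although stated for the initial time $0$, applies equally with initial time $\tau_0$: the substitution $\tau\mapsto\tau-\tau_0$ turns \eqref{eqn:Riccati} into a Riccati equation of exactly the same form with the shifted continuous (indeed $\T$-periodic) coefficients $\tM(\cdot+\tau_0)$ and $\tA(\cdot+\tau_0)$, and the positivity-preservation argument of the proposition relies only on this quadratic structure and the continuity of the coefficients, not on the particular starting time. Applying the proposition in this shifted form to the given solution $G$ with data $G(\tau_0)$ then yields that $G(\tau)$ is positive semidefinite for all $\tau\ge\tau_0$.

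It then remains to upgrade ``for all $\tau\ge\tau_0$'' to ``for all $\tau$'', which is exactly where periodicity enters. Since $G$ is $\T$-periodic, every value of $G$ is already attained on the interval $[\tau_0,\tau_0+\T)\subset[\tau_0,\infty)$: for any $\tau\in\Real$ write $\tau=\tau'+k\T$ with $k\in\mathbb{Z}$ and $\tau'\in[\tau_0,\tau_0+\T)$, so that $G(\tau)=G(\tau')$ is positive semidefinite by the previous step. Hence $G(\tau)$ is positive semidefinite for all $\tau$. The positive definite case is verbatim the same, invoking instead the definiteness conclusion of Proposition \ref{prop:Riccati init pbm}. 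The only genuine point I would treat with care, as opposed to the routine periodicity bookkeeping, is the legitimacy of restarting the proposition at $\tau_0\ne 0$; once that shift invariance is justified, the remainder is just the short forward-propagation-plus-periodicity argument above.
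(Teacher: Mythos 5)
Your proposal is correct and is exactly the argument the paper has in mind: the paper gives no explicit proof, stating only that the corollary follows ``immediately'' from Proposition \ref{prop:Riccati init pbm}, and your forward-propagation-from-$\tau_0$ plus $\T$-periodicity is the intended (and essentially only) way to make that deduction precise. The one point you rightly flag --- restarting the proposition at $\tau_0\ne 0$ --- is indeed legitimate for the reason you give, together with uniqueness of solutions to the initial value problem so that the propagated solution coincides with the given periodic one.
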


Next, we give a sufficient and necessary conditions for the
existence and uniqueness of the periodic positive definite
solution to the PRDE \eqref{eqn:Riccati}.  We start with the
connection between the PRDE and the periodic Lyapunov
differential equation (PLDE).
Assume $G(\tau)$ is nonsingular for all $\tau$ (a sufficient
condition is that $G$ is nonsingular at certain $\tau_0$).
Let $H(\tau):=G^{-1}(\tau)$.  Then by \eqref{eqn:Riccati},
$H(\tau)$ solves the following PLDE
\begin{equation}\label{eqn:inverse}
\dot{H}(\tau)
=\tM(\tau)H(\tau)+H(\tau)\tM(\tau)^{\tr}+\tA(\tau).
\end{equation}
It is easy to verify that the solution of the PLDE
\eqref{eqn:inverse} with the initial condition $H(\tau_0)$
is given by
\begin{equation}\label{eqn:Hsol}
  H(\tau)=\Phi_{\tM}(\tau,\tau_0)H(0)\Phi_{\tM}(\tau,\tau_0)^{\tr}+W(\tau,\tau_0),
\end{equation}
where $\Phi_\tM(\tau,\tau_0)$ refers to the state transition
matrix associated with the $\T$-periodic matrix-valued
function $\wt{M}(\cdot)$ (see Definition \ref{def:Phi}) and
\begin{equation}\label{eqn:W}
  W(\tau,\tau_0):=\int_{\tau_0}^\tau \Phi_\tM(\tau,s)\tA(s)\Phi_\tM(\tau,s)^{\tr}\,\mathrm{d} s.
\end{equation}
%
%
%

Before we state our theorem, we need introduce some
definitions and results from the control theory
\cite{Bittanti1986} of linear periodic ordinary differential
equations.

\begin{definition}\label{def:control}
  A pair $(M(\cdot),N(\cdot))$ of $n\times n$ and
  $n\times m$ real $\T$-periodic matrix-valued functions is
  called controllable if there exists no left eigenvector
  $u$ of the monodromy matrix $\bar{\Phi}_M(0)$ satisfying
  the equation $u\,(\Phi_M(\tau,0))^{-1} N(\tau)=0$ for all
  $\tau\in[0,~\T]$.
\end{definition}
  In a special case that the left
null space of $N$ is zero, then $(M,N)$ is controllable for
any $M$.  Note the fact that the left null space of any real
matrix $N$ is the same as that of $NN^\tr$, then we have the
following useful observation.
\begin{lemma}\label{lem:control}
  A pair $(M(\cdot),N(\cdot))$ of $n\times n$ and
  $n\times m$ real $\T$-periodic matrices is controllable if
  and only if the pair $(M(\cdot),N(\cdot)N(\cdot)^\tr)$ is
  controllable.
\end{lemma}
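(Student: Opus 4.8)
The plan is to reduce everything to a pointwise linear-algebra observation and then quantify over $\tau$. The first thing to note is that the monodromy matrix $\bar{\Phi}_M(0)$, and hence the set of its left eigenvectors, depends only on $M(\cdot)$ and is therefore identical for the two pairs $(M,N)$ and $(M,NN^\tr)$; moreover $N(\tau)N(\tau)^\tr$ is $n\times n$, so Definition \ref{def:control} applies to it. Thus controllability of either pair is governed by exactly the same candidate eigenvectors $u$, and only the defining equation changes, from $u(\Phi_M(\tau,0))^{-1}N(\tau)=0$ to $u(\Phi_M(\tau,0))^{-1}N(\tau)N(\tau)^\tr=0$.

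Next I would establish the elementary fact announced just before the lemma: for any real matrix $N$ and any row vector $v$, one has $vN=0$ if and only if $vNN^\tr=0$. The forward direction is immediate; for the converse, $vNN^\tr=0$ implies $vNN^\tr v^\tr=\norm{vN}^2=0$, whence $vN=0$. In other words, the left null spaces of $N$ and $NN^\tr$ coincide.

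I would then apply this pointwise in $\tau$ with the substitution $v:=u(\Phi_M(\tau,0))^{-1}$: for each fixed $\tau\in[0,\T]$ and each fixed left eigenvector $u$ of $\bar{\Phi}_M(0)$,
\[
u(\Phi_M(\tau,0))^{-1}N(\tau)=0 \quad\Longleftrightarrow\quad u(\Phi_M(\tau,0))^{-1}N(\tau)N(\tau)^\tr=0.
\]
Because this equivalence holds at every $\tau$ with the same witness $u$, the conjunction over all $\tau\in[0,\T]$ is preserved: a given $u$ satisfies the defining equation of $(M,N)$ for all $\tau$ precisely when it satisfies that of $(M,NN^\tr)$ for all $\tau$. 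Consequently an obstructing left eigenvector exists for one pair exactly when it exists for the other, and negating this shows $(M,N)$ is controllable if and only if $(M,NN^\tr)$ is.

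There is no genuine obstacle here: the content is entirely the null-space identity of the second step together with the bookkeeping that the monodromy matrix is unchanged. The only point requiring care is that the two conditions involve a quantifier over all $\tau$, so one must verify that a single eigenvector $u$ serves as a witness for both pairs simultaneously — which is exactly what the pointwise equivalence guarantees.
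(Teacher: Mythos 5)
Your proposal is correct and is exactly the argument the paper intends: the paper justifies the lemma by the one-line observation that the left null space of $N$ coincides with that of $NN^\tr$, applied pointwise in $\tau$ to the vector $u\,(\Phi_M(\tau,0))^{-1}$ in Definition~\ref{def:control}. Your write-up merely makes explicit the $\norm{vN}^2=vNN^\tr v^\tr$ computation and the bookkeeping about the shared monodromy matrix, so there is nothing further to add.
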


Our main result is the following theorem rigorously
connecting the linear stability of the limit cycle to the
positive definite solution of the PRDE \eqref{eqn:Riccati},
under the assumption of certain controllability determined
by the diffusion tensor $a$.  The proof is based on some
classical results in \cite{Bolzern1988} and is left in
Appendix \ref{appendix:A}.
\begin{theorem}\label{thm:main}
	The PRDE \eqref{eqn:Riccati}
	admits a unique $\T$-periodic positive definite solution  if and only if the following two conditions hold:
	\begin{enumerate}
		\item[(i)] the limit cycle $\Gamma$ is asymptotically stable;
		\item[(ii)] at some $\tau' \in [0,~\T)$, the pair
          $(\bar{\Phi}_\tM(\tau'),W(\tau'+\T,\tau'))$ is
          controllable.  where $\bar{\Phi}_{\tM}$ is the
          monodromy matrix of $\tM$ and $W$ is defined in
          \eqref{eqn:W}.
	\end{enumerate}
\end{theorem}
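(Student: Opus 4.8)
The plan is to establish the equivalence by studying the periodic Lyapunov differential equation (PLDE) \eqref{eqn:inverse} for the inverse $H=G^{-1}$, and to characterize when a $\T$-periodic positive definite $H$ exists via the explicit solution \eqref{eqn:Hsol}. The key observation is that a $\T$-periodic solution $H$ of \eqref{eqn:inverse} must be a fixed point of the period map $H(0)\mapsto H(\T)=\bar\Phi_\tM(0)\,H(0)\,\bar\Phi_\tM(0)^\tr+W(\T,0)$. This is a discrete-time Lyapunov (Stein) equation in $H(0)$, whose solvability and the positivity/uniqueness of its solution are governed by the spectral radius of the monodromy matrix $\bar\Phi_\tM(0)$ together with the controllability pair $(\bar\Phi_\tM(0),W(\T,0))$. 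So the proof reduces the periodic continuous-time problem to a discrete algebraic one.

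First I would record that the characteristic multipliers of $\tM$ are exactly the nontrivial ($\neq 1$) characteristic multipliers of the full Jacobian $\partial_x b(\gamma(\cdot))$: this follows because $\tM=\tJ-\tomega$ is the representation of the linearized flow in the normal coordinates $z$, with the tangential multiplier $1$ quotiented out. Hence by Definition \ref{def:stability of limit cycle}, condition (i) is equivalent to $\bar\Phi_\tM(0)$ being a discrete-stable (Schur) matrix, i.e. all its eigenvalues lie inside the open unit disk. Under that hypothesis the Stein equation $H_0=\bar\Phi_\tM(0)H_0\bar\Phi_\tM(0)^\tr+W(\T,0)$ has the unique solution $H_0=\sum_{k=0}^\infty \bar\Phi_\tM(0)^k\,W(\T,0)\,(\bar\Phi_\tM(0)^\tr)^k$, which is automatically symmetric positive semidefinite since each $W$ is (being an integral of $\Phi_\tM\tA\Phi_\tM^\tr$ with $\tA\succ0$). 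Propagating $H_0$ forward by \eqref{eqn:Hsol} then yields a $\T$-periodic symmetric positive semidefinite $H(\tau)$ for all $\tau$, and correspondingly $G=H^{-1}$ solves the PRDE and is periodic and positive definite once we confirm $H_0\succ0$.

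The step where (ii) enters, and the one I expect to be the main obstacle, is upgrading positive \emph{semi}definiteness of the Stein solution $H_0$ to strict positive \emph{definiteness} (equivalently the nonsingularity of $H$, so that $G=H^{-1}$ exists and is a genuine PRDE solution). Here $H_0\succ0$ fails exactly when the accumulated controllability Gramian $\sum_k \bar\Phi_\tM(0)^k W(\T,0)(\bar\Phi_\tM(0)^\tr)^k$ has a nontrivial kernel, and a standard argument shows its kernel is a common left-null subspace detecting an uncontrollable mode of $(\bar\Phi_\tM(0),W(\T,0))$. This is precisely the controllability pair in condition (ii); by Lemma \ref{lem:control} it is equivalent to controllability of $(\bar\Phi_\tM(0),\Phi_\tM(\cdot)\sigma)$-type data coming from the diffusion tensor, so the condition is intrinsic and not merely a technical artifact. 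I would cite the classical discrete Lyapunov/controllability theorem from \cite{Bolzern1988,Bittanti1986} to conclude that $(i)$ together with controllability of the pair in $(ii)$ is necessary and sufficient for the unique Stein solution to be positive definite.

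For the converse direction, I would argue contrapositively. If $\Gamma$ is not asymptotically stable, then $\bar\Phi_\tM(0)$ has an eigenvalue on or outside the unit circle, and the Stein/period-map fixed-point iteration either diverges or fails to produce a positive definite bounded periodic $H$ — more carefully, any periodic $H\succ0$ would force a contradiction with the growth of the transition matrix along the corresponding eigendirection, ruling out existence. If instead (i) holds but (ii) fails, the uncontrollable subspace persists under the flow and yields a direction in which $H$ is degenerate, so $G=H^{-1}$ cannot be a (finite) positive definite periodic solution; hence both conditions are necessary. Assembling these two implications gives the stated if-and-only-if. The delicate part throughout is keeping track of the passage between the continuous periodic picture \eqref{eqn:inverse}–\eqref{eqn:W} and its once-per-period discretization, and ensuring the controllability notion of Definition \ref{def:control} matches the discrete Gramian kernel; that bookkeeping, rather than any single hard estimate, is where the real work lies, which is why I would lean on the established periodic-control results in \cite{Bolzern1988} and relegate the detailed verification to Appendix \ref{appendix:A}.
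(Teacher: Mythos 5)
Your proposal is correct and follows essentially the same route as the paper: pass to the periodic Lyapunov equation for $H=G^{-1}$, identify the characteristic multipliers of $\tM$ with the nontrivial Floquet multipliers of the limit cycle, and invoke the periodic Lyapunov/controllability results of \cite{Bolzern1988} (your Stein-equation argument at the period map is the standard proof of those cited results unpacked, and your Gramian-kernel discussion matches the paper's use of Lemma \ref{lem:control} and Proposition \ref{prop:control}). The one step you compress to a single clause --- that the multipliers of $\tM$ are exactly the nontrivial multipliers of $\partial_x b(\gamma(\cdot))$ with the tangential multiplier ``quotiented out'' --- is where the paper does its real work (Proposition \ref{lem:equivalence}, proved by showing $\Theta(\tau)=E(\tau)^{-1}\Phi_{\partial_x b(\gamma(\cdot))}(\tau,0)E(0)$ is block upper triangular with $\Theta_{11}=\lambda(0)/\lambda(\tau)$ and $\Theta_{22}=\Phi_{\tM}(\tau,0)$), so it deserves a proof rather than an assertion.
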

\begin{remark}
  It is worthy to note that the controllable condition (ii)
  in Theorem \ref{thm:main} as a sufficient condition
  requires only the existence of $\tau' \in [0,~\T)$ such
  that the {\it constant matrix} pair fixed at this $\tau'$
  is controllable.  The conclusion in Theorem \ref{thm:main}
  still holds if condition ($ii$) is replaced by a stronger
  condition { (ii')}:
  $(\bar{\Phi}_\tM(\cdot),W(\cdot+\T,\cdot))$ is
  controllable, or equivalently { (ii'')}:
  ($\tM(\cdot),\tA(\cdot)$) is controllable.  We can obtain
  a weaker (sufficient) condition here in Theorem
  \ref{thm:main} due to Proposition \ref{prop:control} in
  Appendix \ref{appendix:A}.
\end{remark}
In view of \eqref{eqn:W}, we immediately have the following
corollary.
\begin{corollary}
  Assume the limit cycle $\Gamma$ is asymptotically stable.
	\begin{enumerate}
    \item If there exists $\tau' \in [0,~\T)$, such that
      $\tA(\tau')$ is non-singular, then the PRDE
      \eqref{eqn:Riccati} admits a unique $\T$-periodic
      positive definite solution.
		
    \item If for every point $\gamma(\tau)$ on the limit
      cycle, $\tA(\tau)= 0$, then the PRDE
      \eqref{eqn:Riccati} does not admit a unique
      $\T$-periodic positive definite solution.
	\end{enumerate}
	
  \end{corollary}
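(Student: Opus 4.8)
The plan is to read off both statements from Theorem~\ref{thm:main} by tracking how the definiteness of $\tA$ is inherited by the controllability Gramian $W(\tau'+\T,\tau')$ of \eqref{eqn:W}. In both cases condition~(i) of the theorem is granted by assumption, so the only issue is whether the controllability condition~(ii) holds or fails. First I record the criterion in the constant-matrix setting relevant to condition~(ii): by Definition~\ref{def:control}, the pair $(\bar{\Phi}_{\tM}(\tau'),W(\tau'+\T,\tau'))$ is controllable if and only if no nonzero left eigenvector $u$ of $\bar{\Phi}_{\tM}(\tau')$ annihilates $W(\tau'+\T,\tau')$, i.e. $u\,W(\tau'+\T,\tau')\neq 0$ for every such $u$.

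For part~1 the decisive step is to show that non-singularity of $\tA$ at a single instant $\tau'$ already makes $W(\tau'+\T,\tau')$ positive definite. For an arbitrary vector $v$ I write
\[
v^{\tr}W(\tau'+\T,\tau')\,v=\int_{\tau'}^{\tau'+\T} w(s)^{\tr}\tA(s)\,w(s)\,\ds,
\qquad w(s):=\Phi_{\tM}(\tau'+\T,s)^{\tr}v.
\]
Each integrand is nonnegative because $\tA(s)\succeq 0$ for every $s$ (it is a restriction of the diffusion tensor $a=\sigma\sigma^{\tr}$). Since $\tA$ is continuous and $\tA(\tau')$ is non-singular, we have $\tA(s)\succ 0$ on a neighbourhood of $\tau'$; as $\Phi_{\tM}(\tau'+\T,s)$ is invertible, $w(s)\neq 0$ whenever $v\neq 0$. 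Hence the integrand is strictly positive on a subinterval of positive length, so $v^{\tr}W(\tau'+\T,\tau')\,v>0$ for all $v\neq 0$. A positive definite matrix is non-singular, so $u\,W(\tau'+\T,\tau')=0$ forces $u=0$; thus condition~(ii) holds at this $\tau'$, and Theorem~\ref{thm:main} delivers the unique $\T$-periodic positive definite solution.

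For part~2, if $\tA(\tau)=0$ for every $\tau$, then the integrand in \eqref{eqn:W} vanishes identically and $W(\tau'+\T,\tau')=0$ for all $\tau'\in[0,~\T)$. Because $\bar{\Phi}_{\tM}(\tau')$ always has a nonzero left eigenvector $u$ (over $\mathbb{C}$), and such $u$ trivially satisfies $u\,W(\tau'+\T,\tau')=0$, the constant pair is not controllable for any $\tau'$, i.e. condition~(ii) fails everywhere. By the ``only if'' direction of Theorem~\ref{thm:main}, the PRDE \eqref{eqn:Riccati} cannot possess a unique $\T$-periodic positive definite solution.

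The single non-routine point is the strict positivity of the Gramian in part~1: it rests on combining the global sign condition $\tA(s)\succeq 0$ with the strict inequality $\tA(s)\succ 0$ near $\tau'$ supplied by continuity, together with the invertibility of the state transition matrix $\Phi_{\tM}$. The remaining arguments are immediate consequences of Theorem~\ref{thm:main} and the elementary fact that a square non-singular (respectively zero) input matrix renders a constant pair controllable (respectively uncontrollable).
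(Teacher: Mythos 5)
Your proof is correct and follows exactly the route the paper intends: the paper offers no written proof beyond ``In view of \eqref{eqn:W}, we immediately have the following corollary,'' and your argument supplies precisely the omitted details --- positive definiteness (respectively vanishing) of the Gramian $W(\tau'+\T,\tau')$ inherited from $\tA$, translated into condition (ii) of Theorem \ref{thm:main}. The only point worth noting is that your part~1 uses continuity of $\tA$, which the paper leaves implicit but which is genuinely needed for non-singularity at a single instant to force $W\succ 0$.
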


  Since $\tA(\tau)$ is the restriction of the positive
  semidefinite diffusion tensor $a(\gamma(\tau))$ on the
  normal plane
  $P(\tau)=(e_0(\tau))^\perp
  =\mbox{span}\set{e^1(\tau),\ldots,e^{d-1}(\tau)}$,
  we have the following  proposition.
  \begin{proposition}\label{prop:A0}
    Let $\mathcal{N}(a(\gamma(\tau)))$ and
    $\mathcal{R}(a(\gamma(\tau)))$ be the null space and the
    range space of $a(\gamma(\tau))$, respectively.
	\begin{enumerate}
    \item The following conditions are equivalent:
		\begin{enumerate}
        \item The matrix $\tA(\tau)$ is non-singular;
        \item
          $ \mathcal{N}\left(a(\gamma(\tau))\right)\cap
          P(\tau)=\set{0}$;
          \item
          Any nonzero vector $\xi$ in $P(\tau)$ is not in the subspace $\mathcal{N}\left(a(\gamma(\tau))\right)$.
           	\end{enumerate}
		
		\item The following conditions are equivalent:
		\begin{enumerate}
        \item The matrix $\tA(\tau)= 0$;
        \item
          $P(\tau)\subset \mathcal{N}\left( a(\gamma(\tau))
          \right) $.
        \item
          $\mathcal{R}\left(a(\gamma(\tau))\right)\subset
          \mbox{span}\set{\dot{\gamma}(\tau)}$.
		\end{enumerate}
      \end{enumerate}
\end{proposition}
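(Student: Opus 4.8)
The plan is to reduce everything to a single structural fact about the positive semidefinite tensor $a:=a(\gamma(\tau))$: writing $a=\sigma\sigma^\tr$, for any $\xi$ one has $\xi^\tr a\xi=\norm{\sigma^\tr\xi}^2$, so $\xi^\tr a\xi=0\iff\sigma^\tr\xi=0\iff a\xi=0$, i.e. the quadratic form vanishes precisely on $\mathcal{N}(a)$. Alongside this I would record the matrix identity underlying both parts. Collect the reciprocal normal basis into the $d\times(d-1)$ matrix $\tN(\tau):=\begin{bmatrix}e^1(\tau),&\ldots,&e^{d-1}(\tau)\end{bmatrix}$, whose columns span $P(\tau)$. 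By the definition of $\tA$, its entries are $\tA_{ij}=\inpd{e^i(\tau)}{a\,e^j(\tau)}$, so $\tA(\tau)=\tN(\tau)^\tr a\,\tN(\tau)$, a congruence of $a$. Hence $\tA$ is positive semidefinite, and $y\mapsto\tN(\tau)y$ is a bijection from $\Real^{d-1}$ onto $P(\tau)$.

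For part (1), the equivalence of (b) and (c) is a mere restatement (``the intersection is trivial'' versus ``no nonzero vector lies in both''). For (a)$\iff$(b): since $\tA$ is positive semidefinite, non-singularity coincides with positive definiteness, i.e. $y^\tr\tA y>0$ for every $y\neq0$. Setting $\xi=\tN(\tau)y\in P(\tau)$ and using the bijection, this is exactly $\xi^\tr a\xi>0$ for every nonzero $\xi\in P(\tau)$, which by the vanishing fact above is precisely $\mathcal{N}(a)\cap P(\tau)=\set{0}$.

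For part (2), the equivalence of (a) and (b) follows because $\tA=0$ forces each diagonal entry $\inpd{e^i(\tau)}{a\,e^i(\tau)}=0$, hence $e^i(\tau)\in\mathcal{N}(a)$ and so $P(\tau)\subset\mathcal{N}(a)$; conversely $P(\tau)\subset\mathcal{N}(a)$ gives $a\,e^j(\tau)=0$ for all $j$, killing every entry of $\tA$. For (b)$\iff$(c) I would invoke the symmetry of $a$, which yields $\mathcal{R}(a)=\mathcal{N}(a)^\perp$. Recalling from the setup that $P(\tau)=(e_0(\tau))^\perp=(\dot{\gamma}(\tau))^\perp$, so $P(\tau)^\perp=\mbox{span}\set{\dot{\gamma}(\tau)}$, I take orthogonal complements (which reverse inclusions): $P(\tau)\subset\mathcal{N}(a)$ is equivalent to $\mathcal{N}(a)^\perp\subset P(\tau)^\perp$, that is $\mathcal{R}(a)\subset\mbox{span}\set{\dot{\gamma}(\tau)}$.

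The arguments are elementary linear algebra, and the only point requiring genuine care---the one I would flag as the crux---is the systematic use of positive semidefiniteness to pass between $\xi^\tr a\xi=0$ and $\xi\in\mathcal{N}(a)$; without it, neither (a)$\Rightarrow$(b) in part (1) nor (a)$\Rightarrow$(b) in part (2) would go through. The symmetry of $a$ is the second structural input, needed only to characterize the range in (c) of part (2).
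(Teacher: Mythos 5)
Your proof is correct. The paper itself omits the argument entirely (``The proof is trivial and skipped''), and what you have written is exactly the routine verification the authors had in mind: the congruence $\tA(\tau)=\tN(\tau)^\tr a\,\tN(\tau)$ together with the positive-semidefinite fact that $\xi^\tr a\xi=0\iff a\xi=0$ handles part (1) and the equivalence (a)$\iff$(b) of part (2), and the identity $\mathcal{R}(a)=\mathcal{N}(a)^\perp$ for the symmetric matrix $a$ combined with $P(\tau)^\perp=\mbox{span}\set{\dot{\gamma}(\tau)}$ gives (b)$\iff$(c).
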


 The proof is trivial and skipped.   In Proposition
  \ref{prop:A0}, the condition  1.(c) means that
  any perturbation force  in the normal plane  should  not be nullified  by the
 linear transformation  $\xi \to a {\xi}$ in the perturbed system \eqref{SDE}.
 The heuristic argument of
2.(c) is that when $\xi\to \sigma \xi$ (note
    $\mathcal{R}(a)=\mathcal{R}(\sigma)$) transforms any
random force into the tangent direction of the limit cycle, then it is
impossible to escape the limit cycle.

\subsection{Analytic solution for planar limit cycle}
We end this theoretic section with a specific example for
$d=2$ where the solution can be obtained explicitly.

In the case $d=2$, the limit cycle $\Gamma$ is a curve in
the plane. Let $e_0$ be the unit tangent vector
$\dot{\gamma}/\abs{\dot{\gamma}}$ and $e_1$ be the unit
normal vector $\vec{n}$. It follows that
$\wt{\Omega}\equiv 0$.  The matrix PRDE \eqref{eqn:Riccati}
reduces to a scalar PRDE:
$
\dot{G}(\tau)=-2\tM(\tau)G(\tau)
-\tA(\tau)G(\tau)^2,
$
where
$\tA(\tau)=\inpd{e_1}{a e_1}=\inpd{\vec{n}}{a\vec{n}}\geq 0$
is the diffusion coefficient in the normal direction
$\vec{n}$ of $\Gamma$, and
$\tM(\tau)=\inpd{\vec{n}}{(\partial_x b)\vec{n}}$ is the
Jacobian of $b$ along the normal direction $\vec{n}$.  It is
easy to solve the state transition matrix
$ \Phi_\tM(\tau,0)=\exp\left(\int_{0}^\tau \tM(s)\,\d s
\right) $ and the solution of the Lyapunov equation:
\begin{equation}\label{eqn:H2D}
H(\tau)=H(0)\exp\left(2\int_{0}^\tau \tM(s)\,\d s \right)
+\int_{0}^\tau\tA(s)\exp\left(2\int_s^\tau \tM(\sigma)\,\d \sigma \right)\,\d s.
\end{equation}
The $\T$-periodic solution $H(\tau)$ satisfies $H(0)=H(\T)$
and it follows that
\begin{equation}\label{eqn:H0}
{H(0)}=\frac
{\displaystyle\int_{0}^{\T}\tA(s)\exp\left(2\int_s^{\T} \tM(\sigma)\,\d \sigma \right)\,\d s}
{\displaystyle1-\exp\left(2\int_{0}^{\T} \tM(s)\,\d s \right)}.
\end{equation}
The periodic solution of the PRDE is
$G(\tau)=\frac{1}{H(\tau)}$ with $H$ given by
\eqref{eqn:H2D}.

For this planar case, the condition (i) in Theorem
\ref{thm:main} reads
$
\bar{\Phi}_\tM(0)=\exp\left(\int_{0}^\T \tM(s)\,\d s \right)<1,
~\mbox{
	i.e.,}~
\int_{0}^\T \tM(s)\,\d s<0,
$
and the condition (ii) amounts to $\tA(\tau)\not\equiv 0$.
It is easy to see by \eqref{eqn:H0} that these are exactly
the sufficient and necessary conditions for $H(0)>0$, in
which case we have a unique positive definite solution
$G(\cdot)$.

\section{Numerical  methods}
In this section, we develop the related numerical methods
for the computation of the quasi-potential and the optimal
path escaping from the limit cycle.  To find the limit cycle
$\Gamma$, we apply the Newton-Raphson method
\cite{PraticalAlg}.
The Newton-Raphson method can locate the limit cycle with
arbitrary accuracy and a convergence rate much faster than
integrating the ODE.  After the stable limit cycle is found,
the first issue is how to robustly generate the moving frame
on the limit cycle.

\label{sec:num}
\subsection{Construction of the  frame  vectors }
All coefficients of the PRDE \eqref{eqn:Riccati} for $G$ are
based on the matrix $\tomega$ via the moving frame given by
the basis vectors
$\set{e_j(\tau): 0\leq j\leq d-1, 0\leq \tau\leq \T}$.  To
robustly construct this basis with a good quality is not
quite straightforward in high dimension.

The normalization condition $\abs{e_i(\tau)}\equiv 1$ is
always enforced and the first vector $e_0(\tau)$ is set to
be along the tangent direction:
$e_0:=\dot{\gamma}/\abs{\dot{\gamma}}$.  $d=2$ is a trivial
case where $e_1$ is simply obtained by rotating $e_0$ with
the angle $\pi/2$ in the plane.  For a general $d$, one
could construct a set of orthonormal basis $\set{e_j}$ to
have the Frenet frame, by using the time derivatives of
$\gamma$ as Remark \ref{rem:b} has shown.  But this approach
is not practical for high dimension $d$ since it has the
numerically instability in computing the high order
derivatives $\gamma^{(k)}(\tau)$ for $k$  {large}. Furthermore, it usually leads to the
  vectors
$\set{\yu{\gamma^{(j)}}(\tau): 1\leq j\leq d}$ close to
degeneracy, with a very bad condition number  of the
  resulted matrix $E(\tau)$, even after a low-pass
filtering technique applied to the numerical derivatives.
Consequently the calculation of the inverse $E^{-1}$ or the
Gram-Schmidt orthogonalization becomes impractical even for
$d> 4$, observed from our numerical experiments.  The use of
time derivative of $\gamma$ also suffers from the fact that
at some point the derivative may be zero.  For example, in
the plane, a curve can change the rotation from
counterclockwise to clockwise and as a result, the signed
curvature (the first order derivative of $e_0$) is zero at
the turning point.

We propose a robust construction of the basis by choosing
$ {e_1(\tau),\ldots, e_{d-1}(\tau)}$ for each $\tau$ as the
$d-1$ left-eigenvectors (by excluding the eigenvector
associated with the trivial eigenvalue $1$) of the monodromy
matrix $\bar{\Phi}_{\partial_x b(\gamma)} (\tau)$; refer to
Section \ref{ssec:LSLL}.  The sign of the eigenvectors
$e_j(\tau) $ at $\tau>0$ are chosen to be continuous in
$\tau$ after the direction at $\tau=0$ is fixed.  Then
$e_0(\tau)$ is always orthogonal to other basis vectors,
i.e., the normal plane $P(\tau)$ is
$\mbox{span} \set{e_1(\tau),\ldots,e_{d-1}(\tau)}$ for every
$\tau$.  We do not have that $\set{e_1,\ldots,e_{d-1}}$ are
orthogonal by themselves.  If one uses the
right-eigenvectors instead of the left-eigenvectors, then
the only difference is the loss of the orthogonality of
$e_0$ and $\set{e_1(\tau),\ldots,e_{d-1}(\tau)}$.

Our approach has the computational overhead of solving the
matrix-valued ODE \eqref{eqn:Phi} for {\it each} $\tau$ in
parallel. But this method is robust and produces better
numerical representations of $\wt{\Omega}$ and the
coefficients $\wt{M}$ and $\wt{A}$ for PRDE
\eqref{eqn:Riccati}, which is critical to solve $G$
successfully in the next step.

\subsection{Numerical method for the Riccati equation}
\label{ssec:numR}

We use the iterative method to find the periodic solution
$G$ of the PRDE \eqref{eqn:Riccati}.  Each iteration
$G_n \mapsto G_{n+1}$ simply maps a positive definite matrix
$G_n$ to the solution at time $\T$ of \eqref{eqn:Riccati}
with the initial value $G_{n}$.  This method is equivalent
to integrate the PRDE \eqref{eqn:Riccati} in forward time
for sufficiently long time, i.e., we seek for a stable limit
cycle of the dynamical system \eqref{eqn:Riccati} in the
space of positive definite matrix.  The convergence to the positive definite
periodic solution is guaranteed \cite{Shayman1085} if the initial guess $G_0$
is sufficiently large.  We use a scalar matrix $cI_{d-1}$
with a large $c$ for the initial.

As mentioned before, our basis $\set{e_j}$ is constructed
from the eigenvectors of the monodromy matrix.  For some
examples (see Section \ref{ssec:5D}), the eigenvectors may
not be periodic, but anti-periodic (a  simple analogy is the normal vector of  
the M\"{o}bius band).  The anti-periodic situation needs the
following special technique  of computing $G$ in 
\eqref{eqn:Riccati}.  We assume that the first $d_+$ basis
vectors are periodic while the last $d_-=d-d_+$ vectors are
anti-periodic.  Specifically, we have   one basis set
with the matrix form
\[
E^{+}(\tau)= [e^+_0(\tau),\ldots, e^+_{d-1}(\tau)]
\]
where all $e^+_j(\tau)$ are $C^1$ in $[0,\T)$, but
$e^+_j(\T)=e^+_j(0)$ for $0\leq j\leq d_+-1$ and
$e^+_j(\T)=-e^+_j(0)$ for $d_+\leq j \leq d-1$.  Then we
define the following  basis set
\[
E^{-}(\tau):= [e^-_0(\tau),\ldots, e^-_{d-1}(\tau)]
=E^+(\tau) F
\]
where $e^-_j :=e^+_j$ for $0\leq j\leq d_+-1$ and
$e^-_j:=-e^+_j$ for $d_+\leq j \leq d-1$.  Equivalently to
this definition of new basis set, the $d\times d$ matrix $F$ is
$\begin{bmatrix}
  I_{d_-} & 0 \\
  0 & - I_{d_+}\end{bmatrix}.$
Correspondingly, we can have $\wt{M}^+$, $\wt{M}^-$ and
$\wt{A}^+$, $\wt{A}^-$, based on these two local coordinate
systems $E^+$ and $E^-$, respectively.  We then solve the
following $2\T$-periodic Riccati equation for each of the
above iterations $G_n \mapsto G_{n+1}$:
\begin{equation}
\label{eqn:2T}
\begin{cases}
  \dot{G}(\tau)&= -\hat{M}(\tau)^{\tr}
  G(\tau)-G(\tau)\hat{M}(\tau)
  -G(\tau)\hat{A}(\tau)G(\tau),  ~~0\leq \tau \leq 2\T \\
  G(0)&=G_n,
\end{cases}
\end{equation}
where the coefficients   are defined by
\[
\hat{M} (\tau)=
\begin{cases}
  \wt{M}^+(\tau) &  0\leq \tau \leq \T\\
  \wt{M}^-(\tau) & \T <\tau\leq 2\T
\end{cases},~~
\hat{A} (\tau)=
\begin{cases}
  \wt{A}^+(\tau) &  0\leq \tau \leq \T\\
  \wt{A}^-(\tau) & \T <\tau\leq 2\T
\end{cases}.
\]
$G_{n+1}$ is chosen as $G(2\T)$.  It is not difficult to
show that $G(\tau+\T)=FG(\tau)F$ for $\tau\in [0,\T]$.  So,
$G(\tau+\T)$ and $G(\tau)$ have the same eigenvalues and
their eigenvectors are connected by the elementary matrix
$F$: they are essentially the same solution represented by
the two different local coordinate systems $E^+$ and $E^-$.
In other words, the solution of \eqref{eqn:2T} is
$2\T$-periodic but its eigenvalues are $\T$-periodic.  Any
point $x$ near the limit cycle has two representations
$(\tau, z^+)$ or $(\tau, z^-)$, based on $E^+$ or $E^-$,
with the relation $z^-=F z^+$.  The quadratic approximation
of the quasi-potential at $x$ then also has two equivalent
forms:
$Q(x)=\frac12 \inpd{z^+}{G(\tau)z^+}=\frac12
\inpd{z^-}{G(\T+\tau)z^-}$.

\subsection{Solving Hamiltonian systems to generate extremal trajectories  emitting from limit cycle}
\label{ssec:shoot}
Quite like the case of a stable fixed point, one can solve
the Hamiltonian ODE \eqref{eqn:HODE} near the limit cycle
with a pair of the correct initial value on the Lagrangian
manifold of the Hamiltonian system.  The initial values
should be set on the tangent space of the Lagrangian
manifold emitting from the limit cycle.  Select a tiny value
$h>0$ and set the initial condition
$\phi(t=0):=\gamma(\tau)+\sum_{i=1}^{d-1} z_i e_i(\tau) $
with the parameters $(\tau,z)$ on the tube
$ [0,\T)\times (h\mathbb{S}^{d-1})$.  The corresponding
momentum $ p(t=0) = \nabla_x Q $ is set by \eqref{eqn:qapp}.
Then the quasi-potential $V$ at the position $\phi(t)$ is obtained by
$V(\phi(t))= \frac12 \inpd{z}{G( \tau)z}+ \frac12 \int_0^T
\inpd{a(x(t)p(t))}{ p(t)}\dt$.
One need scan all initials on the tube to generate
Hamiltonian trajectories (instantons) for each of them. So,
this approach is preferred   for the low
dimension $d=2$ or $3$.  But the patterns revealed by these
instantons are insightful for understanding the exit
problem.

%

\subsection{Minimum action method to compute the minimum action path emitting from limit cycle}

We next formulate how to use the MAM to compute the minimum
action path and quasi-potential away from the limit
cycle. The version of MAM we used is gMAM.  There are three
approaches. The most straightforward one is to directly use
any traditional MAM by fixing one end of the path on an
arbitrary location of the limit cycle.  Since the initial
path is constructed as a straight line segment connecting
two end points, the choice of this fixed endpoint and the
initial path becomes very important and usually this method
performs bad with a large error both in the path and in the
quasi-potential. The second approach is to confine the end
on the limit cycle rather than on a specific location, so
that this end point of the path can move along the limit
cycle during the minimization procedure. We abbreviate this
approach to gMAM-LC. Theoretically, as the number of grid
points increases, this approach can find the true path with
infinitely long path.  In gMAM-LC, one needs to use very
long path to get relatively accurate result, since a large
part of the path will spiral around the limit cycle but
contribute very little to action.  We remedy this by
introducing the third approach which uses the quadratic
approximation of the \qp within a small tubular neighbor of
the limit cycle.  After splitting the total action as the
sum of the action from the limit cycle to the neighbor and
the action from the neighbor to the final designated
endpoint, one only needs to compute the second part by the
MAM.  Specifically, we use the following tube with a uniform
small radius $h>0$ as the neighbor.
\[
\A_h=\bigcup_{\tau\in(0,\T]} \set{x \in \Real^d:
  x=\gamma(\tau)+\sum_{i=1}^{d-1} z_i e_i(\tau), \ \
  |z|=\sqrt{\sum_{j=1}^{d-1}z^2_i}\leq h }.
\]
An alternative choice is to use the level set of
$\mathcal{Q}_\delta=\set{(\tau,z): Q=\frac12 z^\tr Gz \leq
  \delta}$
as neighborhood, which gives a slightly different version of
the following constraint problem. So we will not give
implementation details of this choice here.

We use the gMAM to calculate the optimal path escaping the
limit cycle $\Gamma$ and ending at some point $x$ outside of
$\A_\delta$. The constraint is that the initial point
$\varphi(0)$ lies on the surface of $\A_\delta$.  By
considering the local coordinate $(\tau,z)$ of $\varphi(0)$,
we have the following constrained minimization problem:
\begin{align}\label{eq:gMAMLQA}
  V(x)=&\min_{\substack
         { \tau\in[0,\mathcal{T}],~|z|=h,
  \\
  \varphi\in AC([0,1]),~\varphi(1)=x  \\
  \varphi(0)=\gamma(\tau)+\sum_{i=1}^{d-1} z_i e_i(\tau) 		
  }
  } \set{ \hat{S}[\varphi]
  +  \frac{1}{2}\inpd{z}{G(\tau)z}}.
\end{align}
We discretize this optimization problem using a linear
finite element approximation to the path $\varphi$, then
solve it by a standard optimization software, e.g. the
\textsf{fmincon} in \textsf{MATLAB}. To implement the
arc-length parametrization, we add equi-arclength
constraints on the discretized grid points. Denote the
objective function as
$F(\varphi, \tau, z) = \hat{S}[\varphi] +
\frac{1}{2}\inpd{z}{G(\tau)z} $.
After discretization of $\varphi$ using $N$ linear finite
elements, we have
\begin{equation}\label{eq:gMAMLQAaction}
  F_N(\varphi, \tau, z) = \hat{S}_N[{\bm \varphi}_N]
  +  \frac{1}{2}\inpd{z}{G(\tau)z}.
\end{equation}
The discrete gMAM action $\hat{S}_N[{\bm \varphi}_N] $ is given as
\begin{equation}\label{eq:gMAMaction}
  \hat{S}_N[{\bm \varphi}_N] = \sum_{i=1}^N
  \frac{\left|\varphi_{i}-\varphi_{i-1}\right|(\left|b(\varphi_{i})\right|+\left|b(\varphi_{i-1})\right|)}{2}
  -\left\langle \varphi_{i}-\varphi_{i-1},\frac{b(\varphi_{i})+b(\varphi_{i-1})}{2}\right\rangle ,
\end{equation}
where
${\bm \varphi}_N=\left\{\varphi_0, \varphi_1, \ldots,
  \varphi_N\right\}$
and we have assumed that $a$ be identity matrix. The
constraints for discretized problem are
\begin{align*}
& |z|^2 - h^2 = 0,
\\
& \gamma(\tau)+\sum_{i=2}^d z_i e_i(\tau) - \varphi_0 = 0,
\\
& |\varphi_i-\varphi_{i-1}|^2 - |\varphi_i-\varphi_{i+1}|^2 = 0, \quad i=1,\ldots, N-1.
\end{align*}
We abbreviate this gMAM approach with local quadratic
approximation for quasi-potential near limit cycle to
gMAM-LQA.  It is easy to see that gMAM-LC can be regarded as
a special case of gMAM-LQA with $h=0$.  
The adaptive choice of the value of  $h$ is to set 
$h\propto 1/N$ to get a second order convergence rate in
$N$.

\section{Numerical examples}

\subsection{Van der Pol oscillator}
The perturbed system takes the form
\begin{equation}\label{VdP}
\left\{
\begin{array}{lcl}
  \d{x} &=& y\dt +\sigma_1(x,y)  \sqrt{\eps}\d w_t^x,\\
  \d{y}&=& (-x+(1-x^2)y )\dt + \sigma_2(x,y)\sqrt{\eps}\d w_t^y.
\end{array}
\right.
\end{equation}
The period of the limit cycle $\Gamma$ in the deterministic
dynamics is $6.6633$. 
We consider the following three cases of
the diffusion coefficients $\sigma_1$ and $\sigma_2$:
\begin{enumerate}
	\item[(i)] isotropic noise:   $\sigma_1=\sigma_2\equiv 1$;
	\item[(ii)] degenerate noise:   $\sigma_1\equiv 0, \sigma_2\equiv 1$;
	\item[(iii)] discontinuous coefficients:  $\sigma_1(x,y)=\sigma_2(x,y)=\begin{cases}
	1 & \mbox{ if } x>0; \\
	0 & \mbox{ if } x<0.
	\end{cases}$
\end{enumerate}
The curvilinear coordinate in representing $G$ is the Frenet
frame.  For the case (i), we plot the limit cycle and the
contours of the approximated \qp in the left panel of Figure
\ref{fig:VD1}.  It is observed that although the level sets
outside of the limit cycle are smooth, the level set with a
small value $Q=0.02$ inside the limit cycle presents four
kinks.  The kinks may come from the breakdown of the
asymptotic approximation of the local coordinates or from
the caustics of the \qp inside the limit cycle.  To explore
this issue, we compute the behaviors of extremal paths near
the limit cycle by applying the symplectic integrator for
the Hamiltonian system as dictated in Section
\ref{ssec:shoot}.  The numerical value of Hamiltonian is
checked to be smaller than $10^{-7}$.  The right panel of
Figure \ref{fig:VD1} shows four extremal paths (projected
onto the $\Real^2$ position space ) emitting from the limit
cycle.  Two of them start from an interior position and
spiral in a quite chaotic way whose final destination is
either being trapped inside or leaving the limit cycle.
These observations may suggest the multi-valued and
self-similar features of the \qp inside the stable limit
cycle.  The similar behavior is observed\cite{SmelyPRE1997,McC2005}   for the
time inverted Van der Pol where the limit cycle is unstable
and the focus inside the limit cycle is linearly stable.

\begin{figure}
		\includegraphics[width=0.48\textwidth]{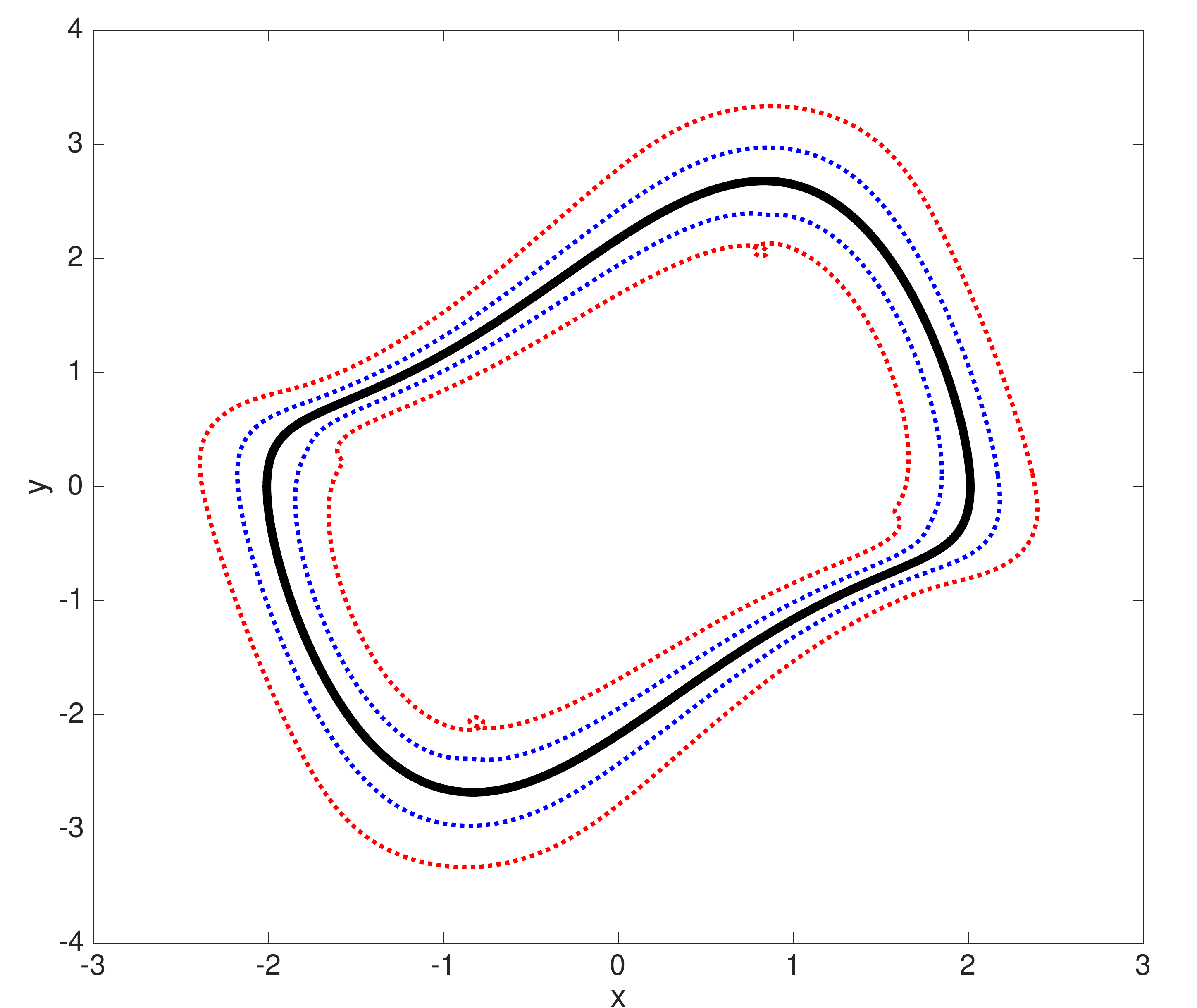}
		\includegraphics[width=0.48\textwidth]{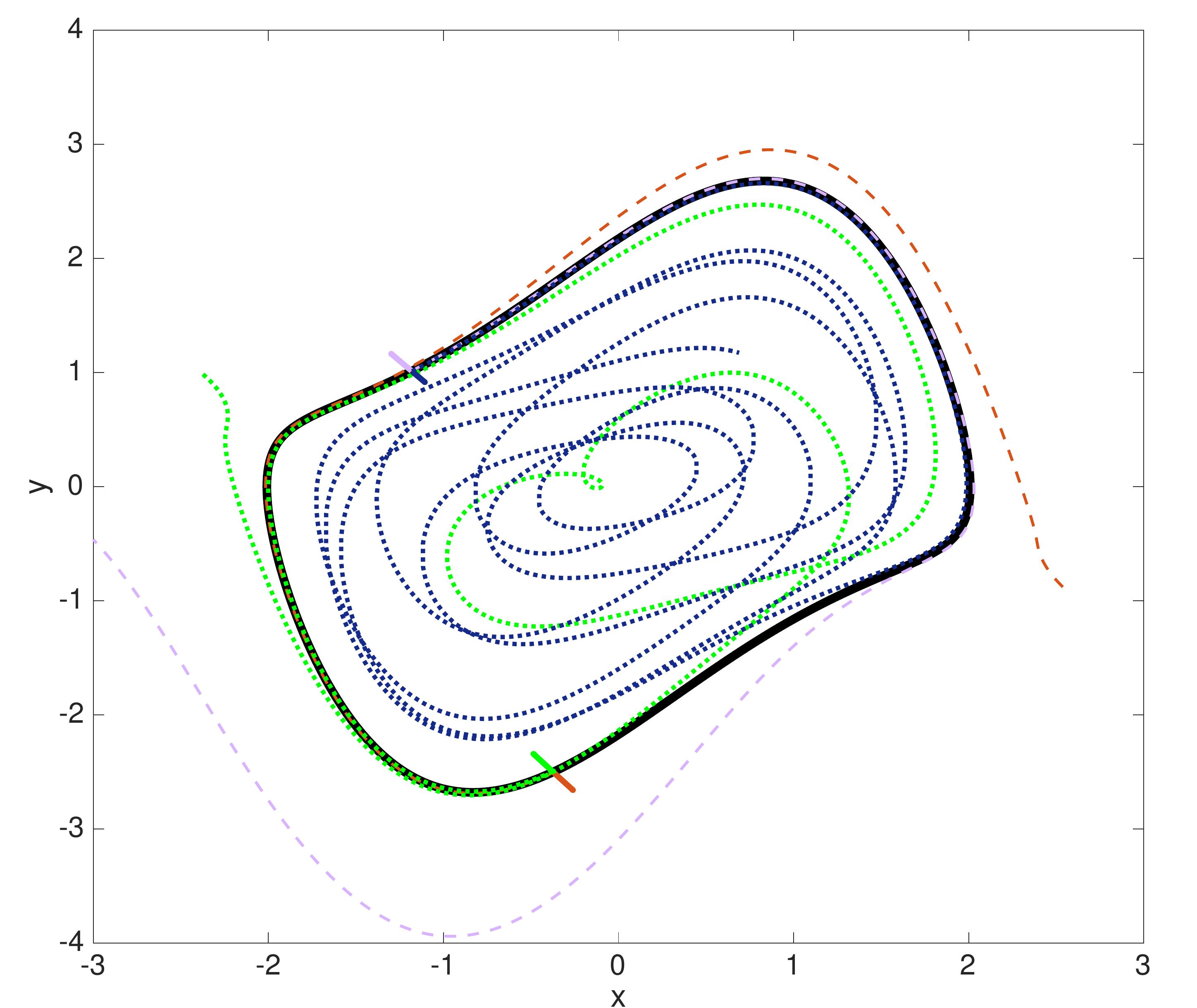}
        \caption{ {\it left}: The limit cycle (thick solid
          curve) and two level sets (dashed curves) of the
          \qp $Q(x)$ corresponding to $Q$ being $0.02$
          (blue) and $0.1$ (red), respectively.  {\it
            right}: The four extremal paths starting from
          four initial positions close to limit cycle.  The
          \qp at these initial positions is as small as
          $10^{-5}$.  Two paths (dotted curves) start from
          the inside of limit cycle (one of them finally
          spirals out of the domain encircled by the limit
          cycle) and the other two paths (dashed curves)
          start outside of the limit cycle.  The four short
          bars indicate the direction of the initial
          momentums associated with the  respective position variables.}
	\label{fig:VD1}
\end{figure}

Next, we study the effect of the diffusion coefficient by
comparing the solution $G$ in the above three cases of
diffusion coefficients.  We plot in Figure \ref{fig:VDG1}
the functions of $G(\tau)$ for the three cases.  As
expected, the \qp in the case (ii) is larger than the \qp in
the case (i) since the dynamics of $x$ component contains no
noise in the case (ii).  For the case (iii), it is seen that
after the diffusion matrix
$a=\mbox{diag}\set{\sigma_1^2,\sigma_2^2}$ is turned off
(the dashed black curve in the figure), the \qp does not
become steep immediately: it becomes significantly large
only after a short period of ``buffering'' region (where
$\tau $ is roughly $2.5\sim 3.5$ in the figure).  In this
region, even without the noise, the deterministic dynamics
on the limit cycle can still carry the previously perturbed
trajectory for a while until the trajectory significantly
leaves the limit cycle.  When the noise is turned on again
we see a sharp decrease of the \qp back to a small value
again. Note that the case (ii) and (iii) have the degeneracy
of the diffusion matrix $a(\tau)$ at certain points, but it
does not affect the existence of the positive solution $G$.

\begin{figure}
	\includegraphics[width=0.68\textwidth]{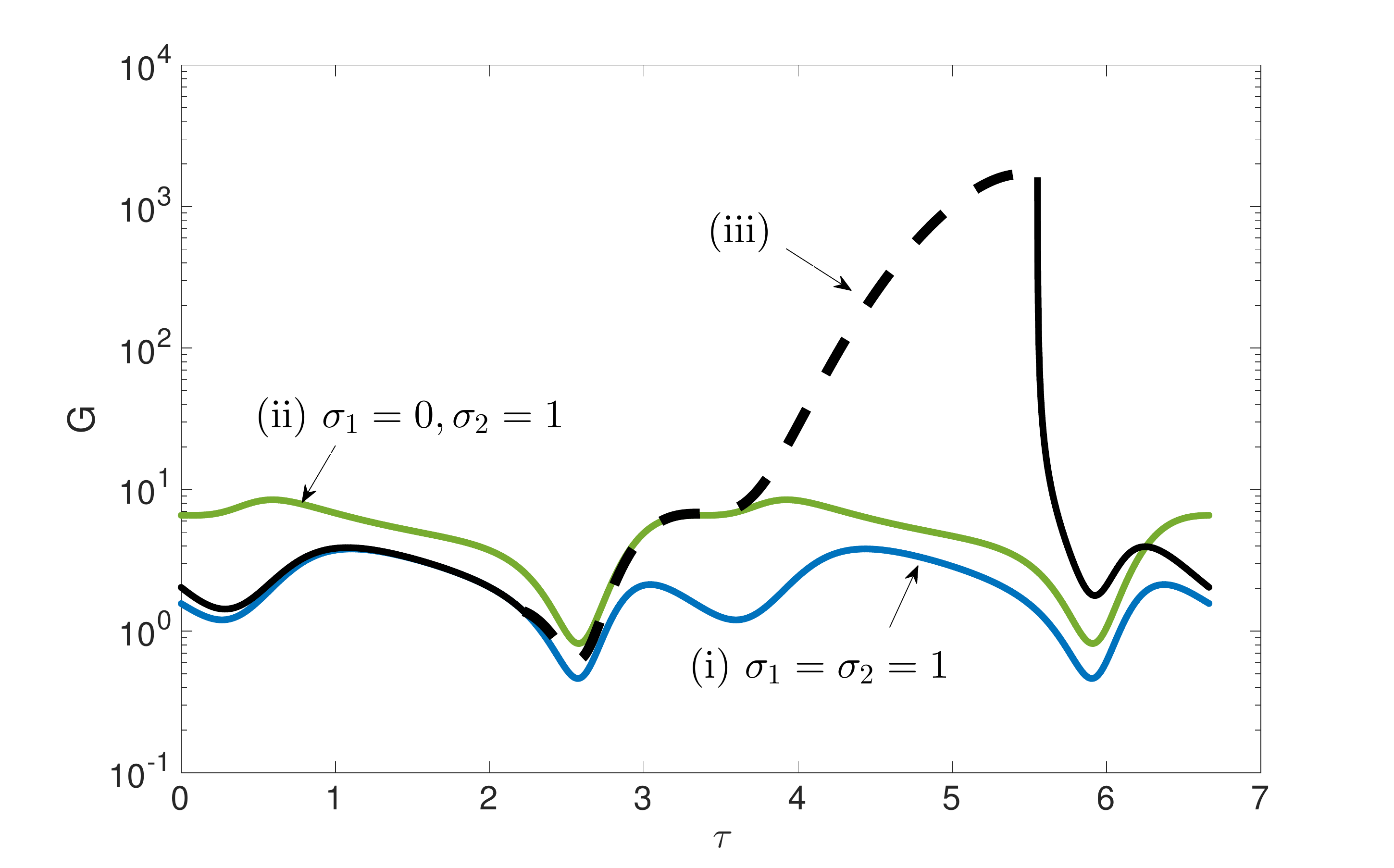}
	\caption{The profiles of $G(\tau)$ v.s $\tau$ for the
      cases (i), (ii), and (iii).  The dashed black segment
      in the case (iii) corresponds to the location where
      $\sigma_1=\sigma_2=0$ (i.e. $x<0$) and the solid black
      segments on the same curve corresponds to the location
      where $\sigma_1=\sigma_2=1$.  }
		\label{fig:VDG1}
\end{figure}

We use gMAM with local quadratic approximation for the
quasi-potential near limit cycle (abbr. gMAM-LQA) and the
gMAM with one end point attached to limit cycle
(abbr. gMAM-LC) to calculate the quasi-potential of point
$(2,-2.5)$ with respect to the given limit cycle.  The
results of the minimum action path is given in
Fig~\ref{fig:ex1MAP}.  The convergence behavior is given
Fig~\ref{fig:ex1conv}, where we use the results of gMAM-LQA
with 160 elements as reference solution. We observe that
while both algorithms have second order convergence, the
gMAM-LQA give better results than the gMAM-LQA method, that
because the gMAM-LC algorithm spends more grid points near
the limit cycle which contribute less action.

\begin{figure}
	\begin{subfigure}[t]{0.4\textwidth}
		\includegraphics[width=\textwidth]{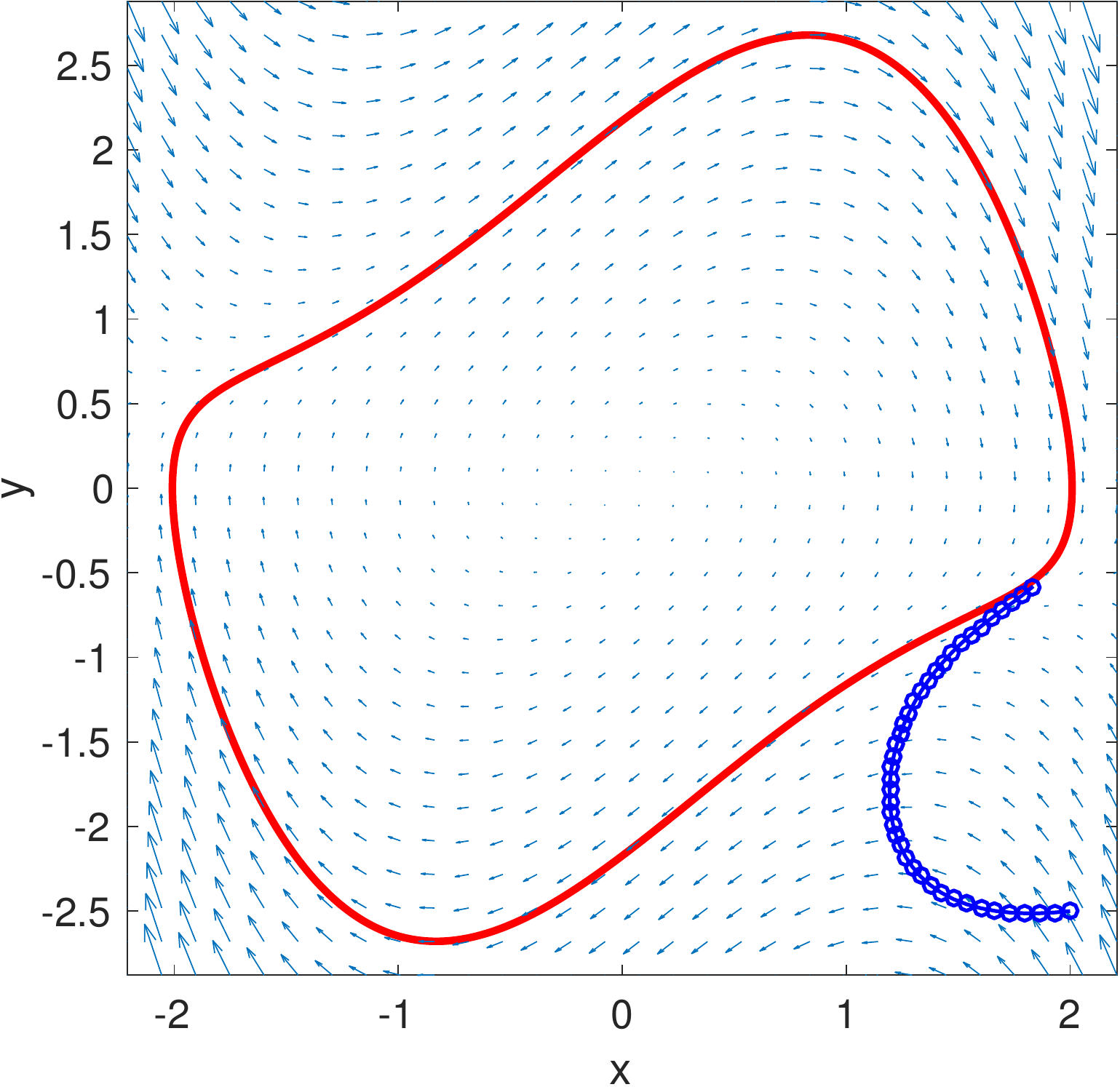}
		\caption{The minimum action path from the limit cycle to point $(2, -2.5)$.}
		\label{fig:ex1MAP}
	\end{subfigure}\hfill
	\begin{subfigure}[t]{0.4\textwidth}
		\includegraphics[width=\textwidth]{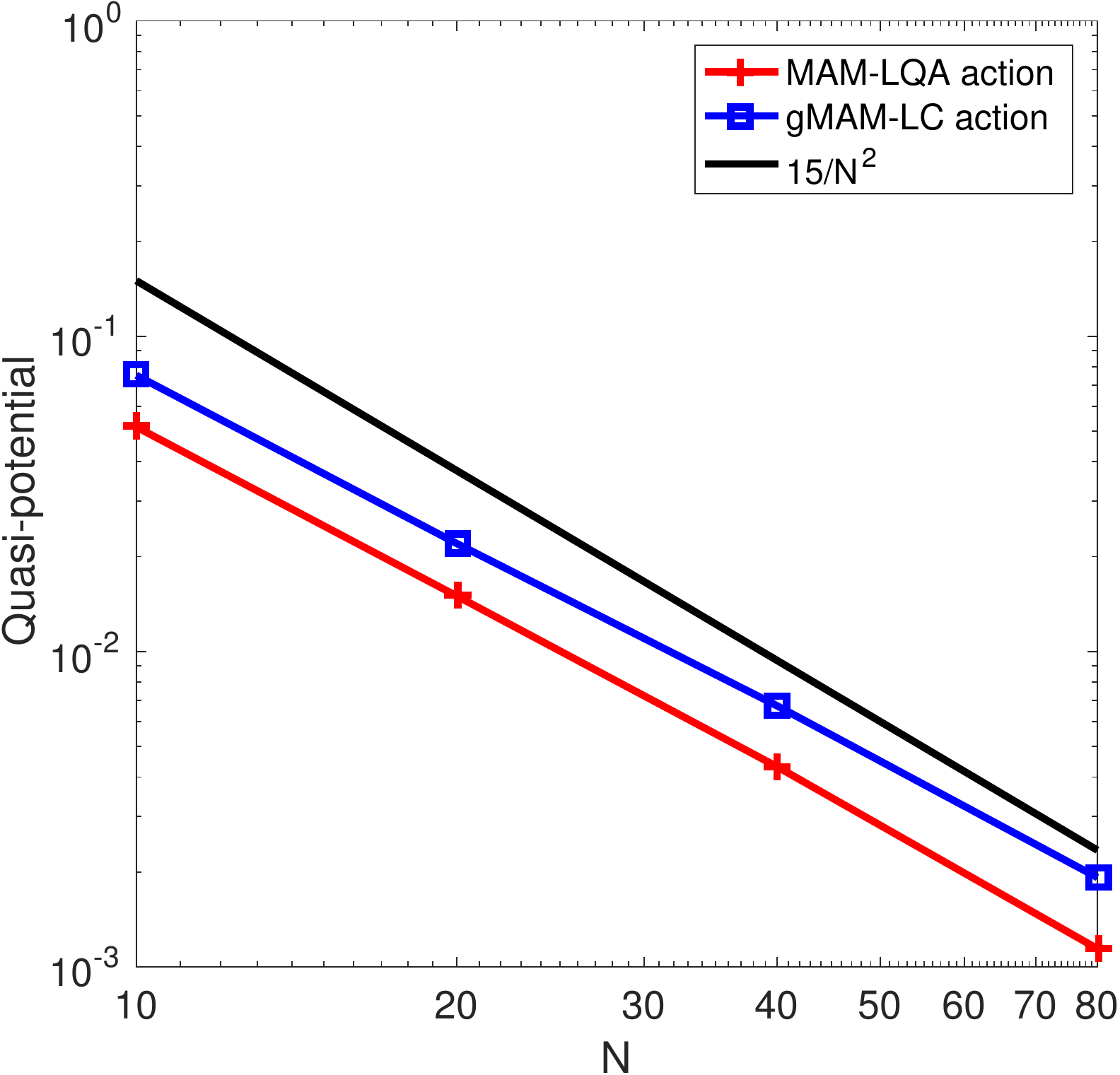}
		\caption{The convergences of the MAM-LQA algorithm and gMAM-LC algorithm.}
		\label{fig:ex1conv}
	\end{subfigure}
	\caption{Numerical results of the minimum action methods for the Van der Pol example.}
\end{figure}

\subsection{ A planar example}
The second example in 2-D is the following system
\cite{Cameron:2012physicaD}:
\begin{equation}\label{twolc}
  \left\{
\begin{array}{lcl}
  \d{x} &=& (x-x^3/3+y-y^3/9)\dt +\sqrt{\eps}\d w_t^x\\
  \d{y}&=& (x+0.9)\dt +\sqrt{\eps}\d w_t^y.
\end{array}
\right.
\end{equation}
This example has two stable limit cycles, separated by a
saddle point located at $(-0.9, 0.6942)$ and the stable and
unstable manifolds of this saddle point.  The \qp for this
example was calculated by directly solving the 2-D
Hamilton-Jacobi partial differential equation
\cite{Cameron:2012physicaD}.  We only compute the \qp around
one of the limit cycle on the top whose period is $5.7966$
and use the MAM to calculate the minimal escape action for
the transition toward the other limit cycle on the bottom by
selecting the final point at the saddle point.  Figure
\eqref{fig:ex2G} plots the  {$G(\tau)$ and MAP}.
The minimal action calculated in the previous work \cite
{Cameron:2012physicaD} is 0.1567. The result of gMAM-LQA
algorithm with $N=160$ is 0.1599 with   a relative error
about $0.1\%$. The minimum action paths obtained using gMAM
and gMAM-LQA with different $N$'s are given in
Fig~\ref{fig:ex2MAP}, in which we observe that the gMAM is
trapped in a local minimum.

\begin{figure}
  \begin{subfigure}[b]{0.49\textwidth}
    \includegraphics[width= \textwidth,height=0.8\textwidth]{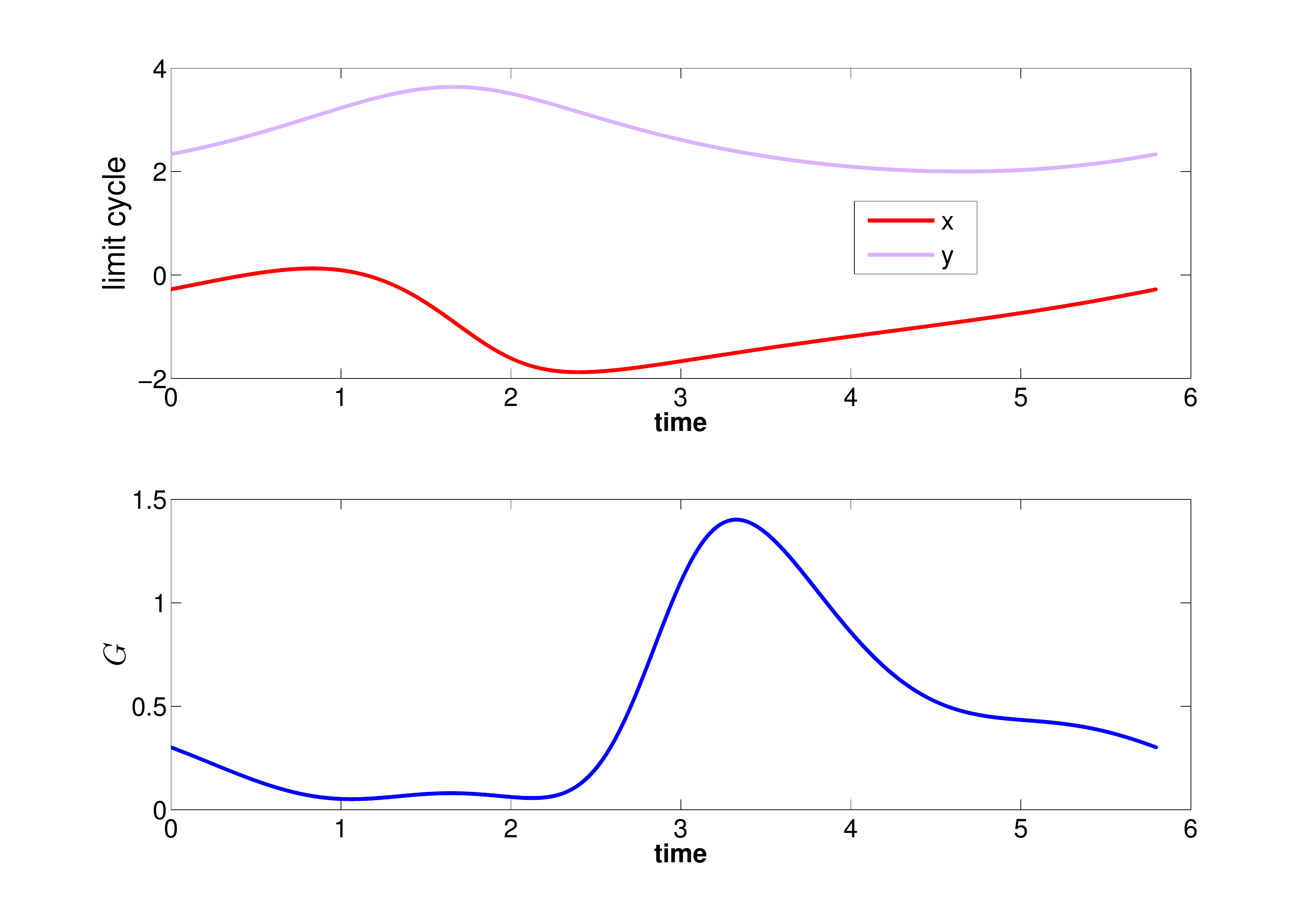}
  \end{subfigure}\hfill
  \begin{subfigure}[b]{0.47\textwidth}
    \includegraphics[width=\textwidth, height=0.76\textwidth]{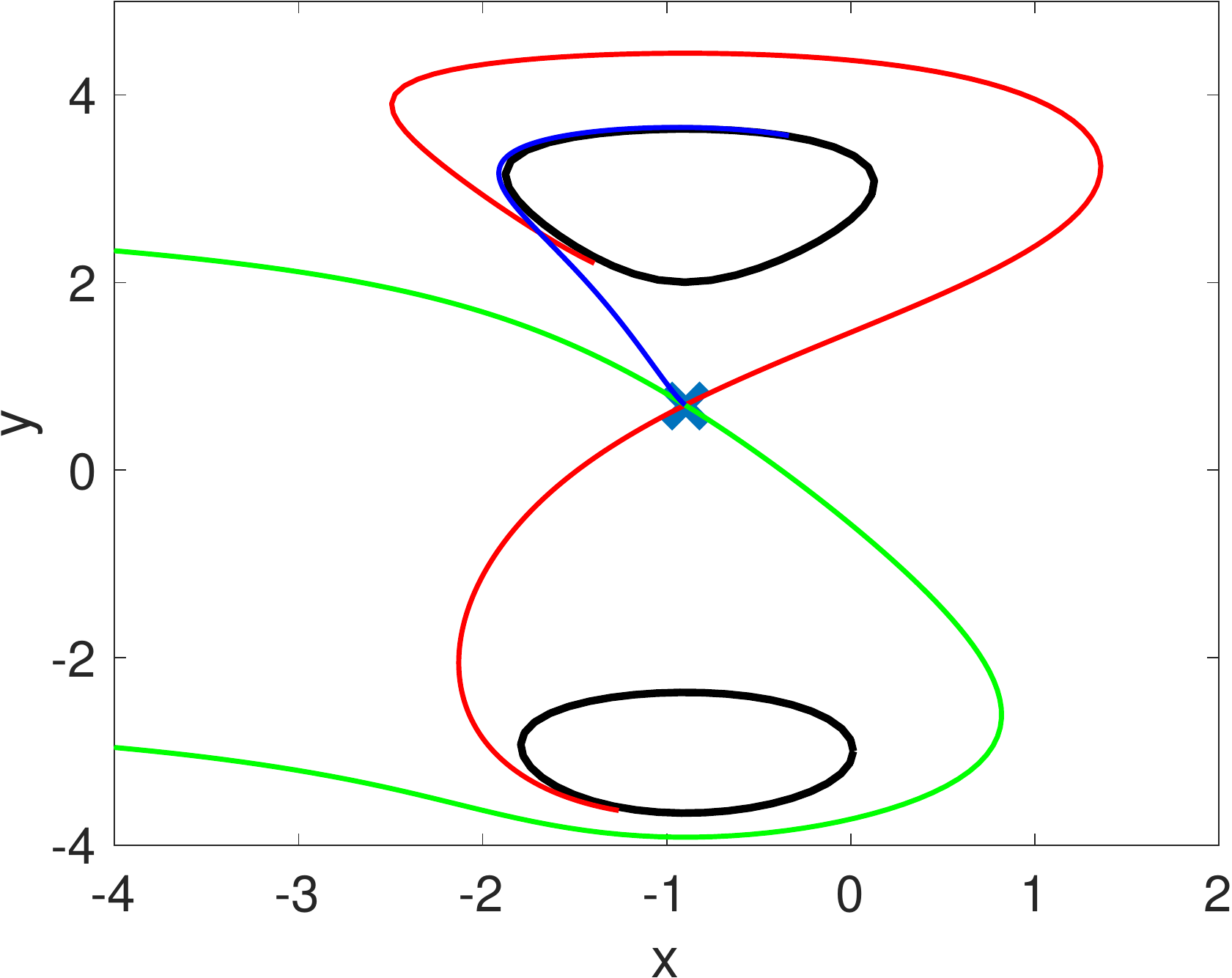}
  \end{subfigure}
  \caption{{\it left}: The profile of $G(\tau)$,
    $0\leq \tau \leq \T$, for the 2D example
    \eqref{twolc}.  The $x$ and $y$ components of the
    limit cycle $\Gamma$ is also shown.  {\it right}:
    The numerical MAP (blue curve) from the limit
    cycle (black) to the saddle point (the marker
    ``$\times$''). The red and yellow curves are,
    respectively, the unstable and stable manifolds of
    the saddle point.}
  \label{fig:ex2G}
\end{figure}

\begin{figure}[htb]
  \begin{subfigure}[b]{0.44\textwidth}
    \includegraphics[width=\textwidth]{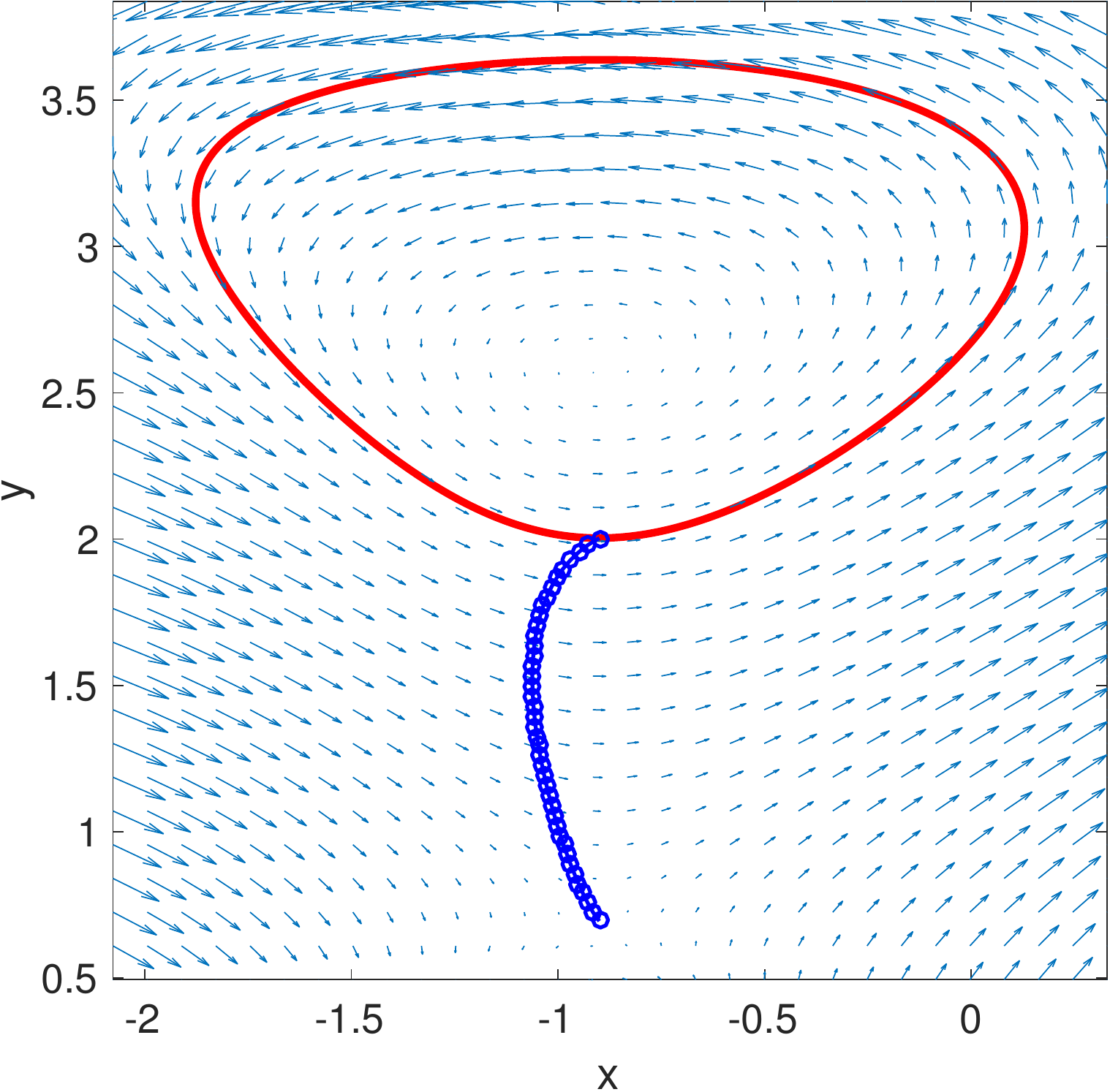}
    \caption{gMAM with initial point fixed on the limit cycle using 40  elements.}
    \label{fig:ex2MAPg40}
  \end{subfigure}
  \hfill
  \begin{subfigure}[b]{0.44\textwidth}
    \includegraphics[width=\textwidth]{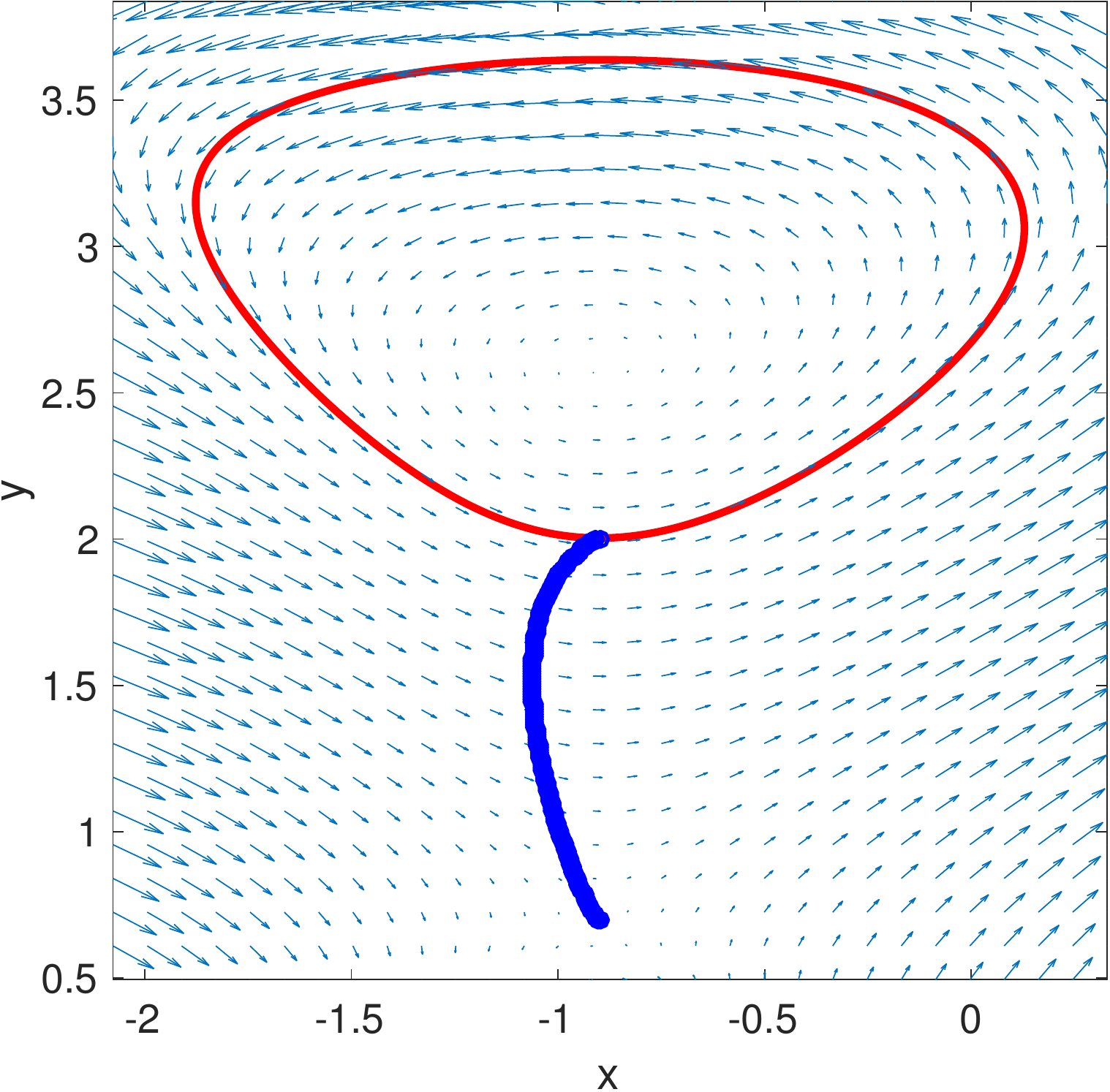}
    \caption{gMAM with initial point fixed on the limit cycle using 160 elements.}
    \label{fig:ex2MAPg160}
  \end{subfigure}
  \vfill
  \begin{subfigure}[b]
    {0.44\textwidth}
    \includegraphics[width=\textwidth]{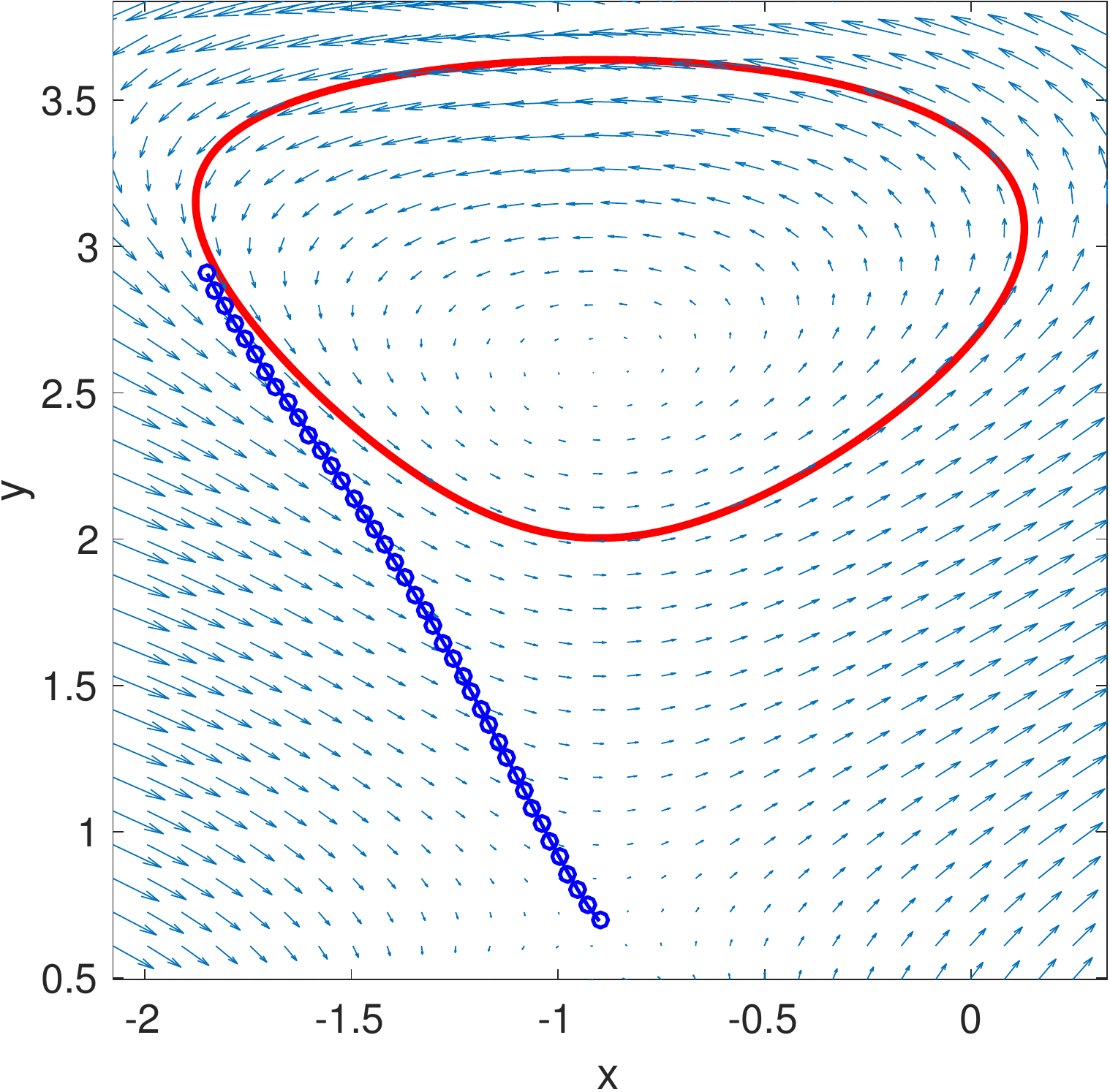}
    \caption{gMAM-LQA using $N=40$ elements.}
    \label{fig:ex2MAPq40}
  \end{subfigure}
  \hfill
  \begin{subfigure}[b]{0.44\textwidth}
    \includegraphics[width=\textwidth]{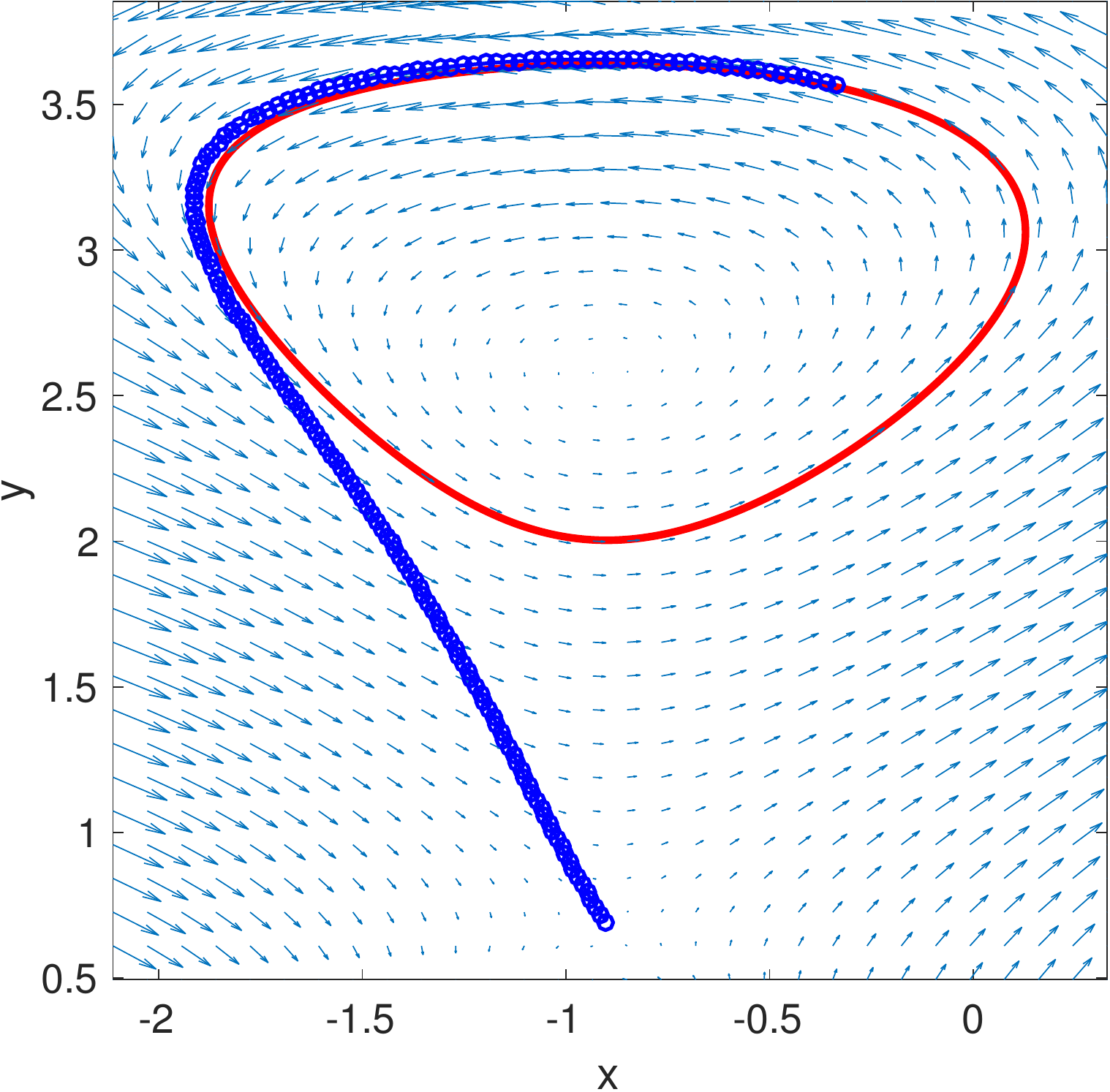}
    \caption{gMAM-LQA using $N=160$ elements.}
    \label{fig:ex2MAPq160}
  \end{subfigure}
  \caption{ The MAP from the limit cycle to the saddle
    point.  The same initial guess is used in all four
    cases: a straight line connecting the two ends in
    (A) and (B).  }
  \label{fig:ex2MAP}
\end{figure}

\subsection{3D Lotka-Volterra  model}
The following randomly perturbed system is the example of 3D
Lotka-Volterra model for   three species
$x=(x_1,x_2,x_3)\in \Real^3$:
\begin{equation}
  \d x_i(t) = x_i  \sum_{j=1}^3 c_{ij}(1-x_j) \dt +  \sqrt{\eps} \d w_i(t).
\end{equation}
We set the matrix $C=[c_{ij}]=
\begin{bmatrix}
  2 &5& 0.5 \\
  0.5 &1 & 1.48\\
  1 & 0.5 &1
\end{bmatrix}.  $
There exist two steady non-negative solutions: $x\equiv 1$
has an one-dimensional stable manifold and an unstable
 {spiral} two-dimensional manifold;
$x\equiv 0$ is unstable and its three unstable eigenvectors
are along three coordinate axes, respectively.  Around the
spiral saddle point $x\equiv 1$, there
is a stable limit cycle $\Gamma$ with the period $6.7965$.
The two eigenvalues of $G(\tau)$ are plotted in Figure
\ref{fig:3D_Geig} and they have a large gap, indicating a
strong directional preference of the \qp near the limit
cycle.  This results in a large aspect ratio in the tube
corresponding to the level set of $Q(x)=\delta $.  Figure
\ref{fig:LV3D} plots this tubular surface for
$\delta =2\times 10^{-5}$, which visually looks like a
ribbon rather than a tube.  The numerical MAP calculated by
gMAM-LQA from the limit cycle to the state $x\equiv 0$
(extinctions of all species) is also shown in
Fig~\ref{fig:LV3D}.

%
%

\begin{figure}
  \includegraphics[width=0.8\textwidth]{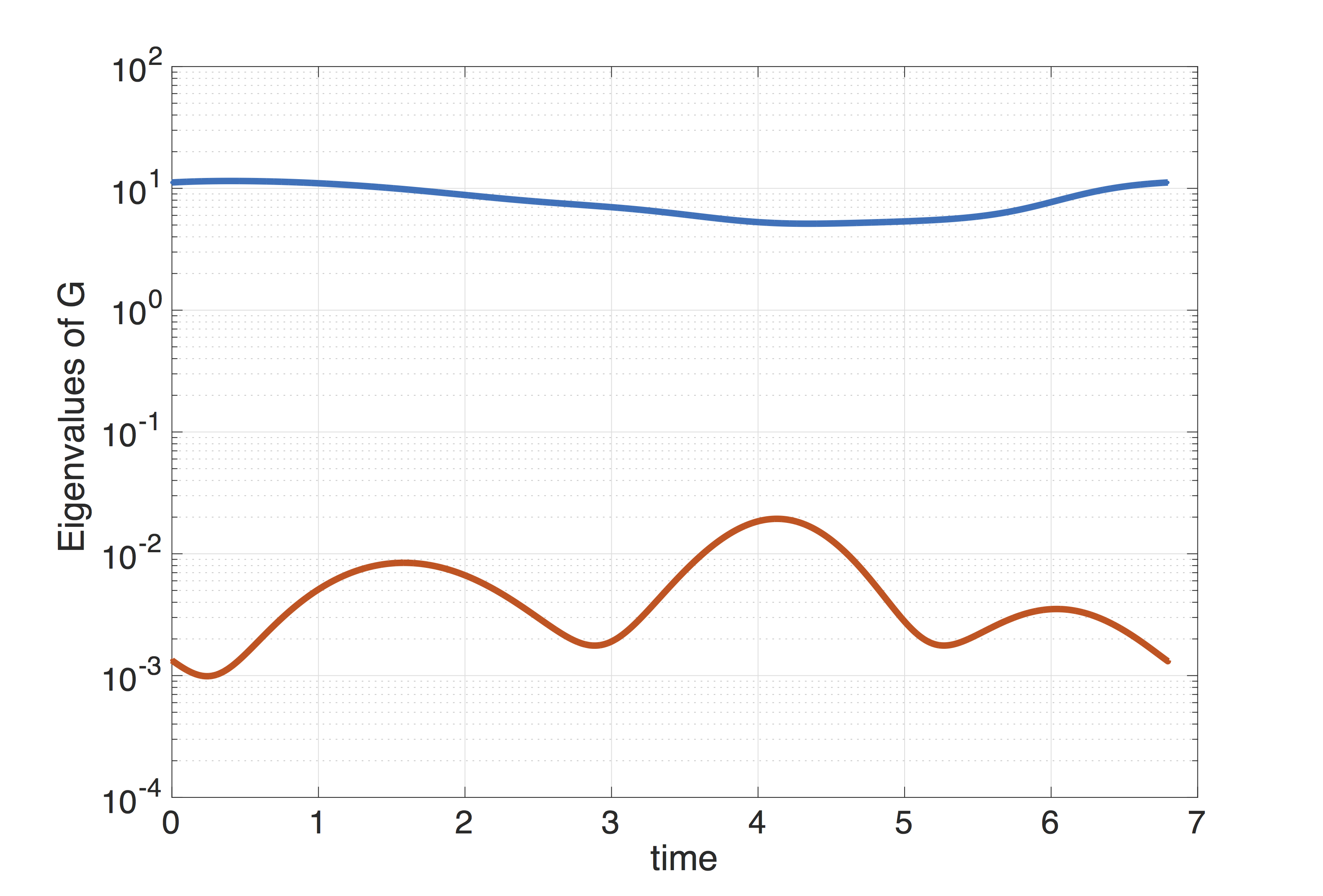}
  \caption{The two eigenvalues of $G$ for the 3D
    Lotka-Volterra model.  }
  \label{fig:3D_Geig}
\end{figure}

\begin{figure}
  \includegraphics[width=0.66\textwidth]{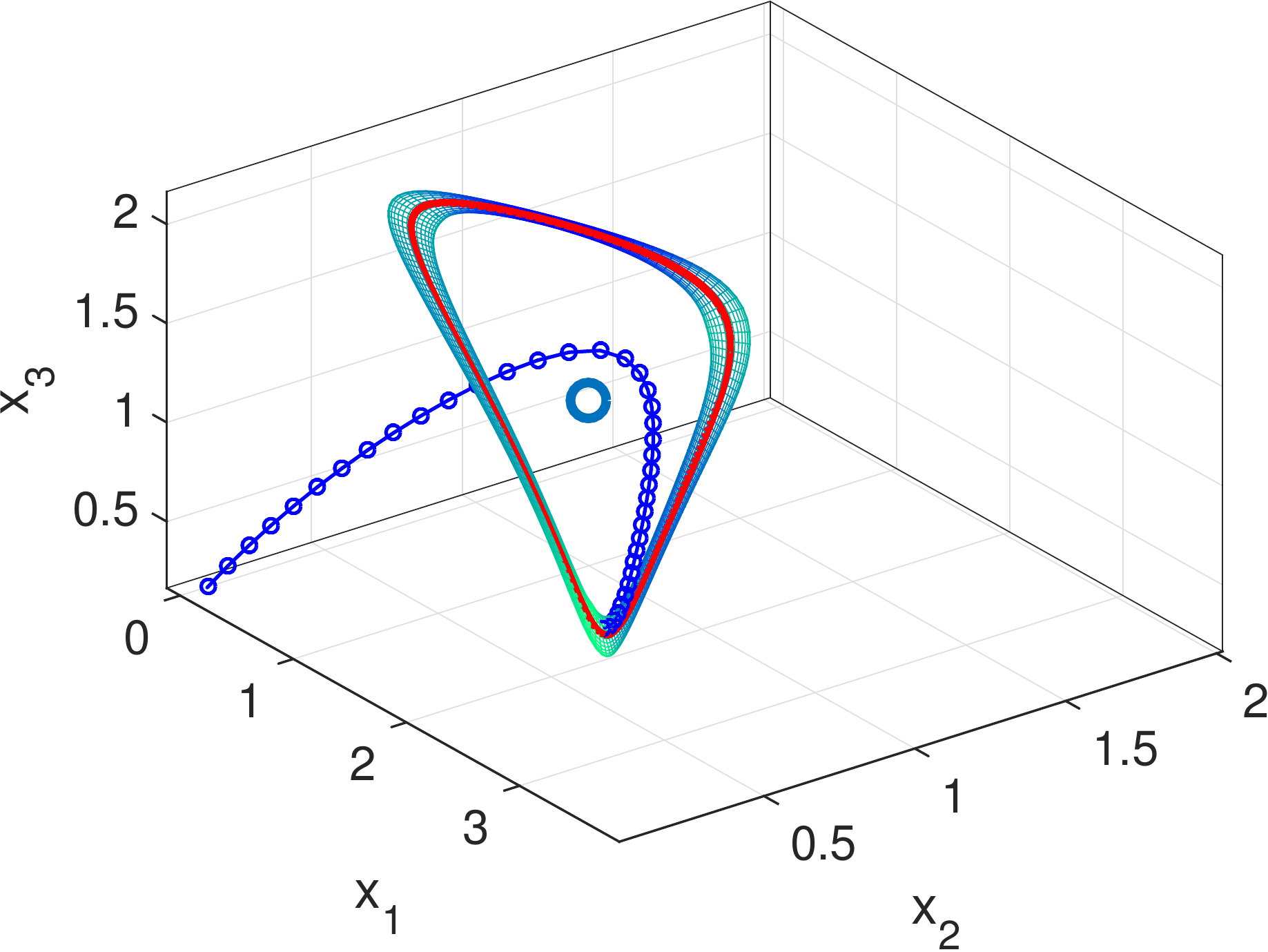}
  \caption{The contour set of the \qp\  for the limit cycle of
    the 3D Lotka-Volterra  model (the  ribbon-like tube in colors).
    The MAP from the limit cycle to the unstable point $x\equiv 0$ are plotted
    as the blue curve with  ``$\circ$'' markers).
    The isolated large circle is the  saddle point $x=(1,1,1)$.
    A red curve is shown to illustrate the flow of the
    deterministic flow.  }
  \label{fig:LV3D}
\end{figure}

\subsection{5D example}
\label{ssec:5D}
%
%

We consider a 5D example whose form is

\begin{equation}\label{fivelc}
  \left\{
    \begin{array}{lcl}
      \dot{x}_1 &=&
                    -x_1+1-2h(x_5)
                    -2h(x_2)+2h(x_2)h(x_5)
                    +2h(x_2)h(x_4)
      \\
                &&\qquad \qquad -2h(x_2)h(x_4)h(x_5)
      \\
      \dot{x}_2 &=&
                    -x_2 +1 -2h(x_1)\\
      \dot{x}_3 &=&
                    -x_3 +1 -2h(x_2)+2h(x_2)h(x_5)
      \\
      \dot{x}_4 &=&
                    -x_4+1-2h(x_3)
      \\
      \dot{x}_5 &=&
                    -x_5+1-2h(x_4)-2h(x_2)+2h(x_2)h(x_4)
\end{array}
\right.
\end{equation}
where $h$ is the sigmoid function
$h(x)=\frac12(1+\tanh (2 x))$.  The corresponding SDE is to
add the isotropic white noise $\sqrt{\eps}\dot{w}_i$ to the
right hand side of \eqref{fivelc}.  The unique limit cycle
is stable with the period $\T= 8.1165$.  When we constructed
the basis $\set{e_0,\ldots, e_4}$ from the left-eigenvectors
of the monodromy matrix, we found that $e_3$ and $e_4$ are
anti-periodic.  So we have to  solve $G$ by
the strategy  based on \eqref{eqn:2T}
which is introduced in Section \ref{ssec:numR}.
Figure \ref{fig:5D} shows the one-period profile of the component $G_{13}$ during the
time interval $(0,2\T)$.   The eigenvalues of $G$ is $\T$-periodic
and we plot the   four eigenvalues of
$G(\tau), \tau\in[0,\T]$ in the same figure.
By using this $G$ and the gMAM-LQA,  we  compute the MAP from the
limit cycle to an endpoint specified at $x= (0.6, 0.6, 0.7, 0.5, 0.3)$.
  Fig~\ref{fig:LV5D} demonstrates the projects of this 5 dimensional path.

\begin{figure}
  \includegraphics[width=0.48\textwidth]{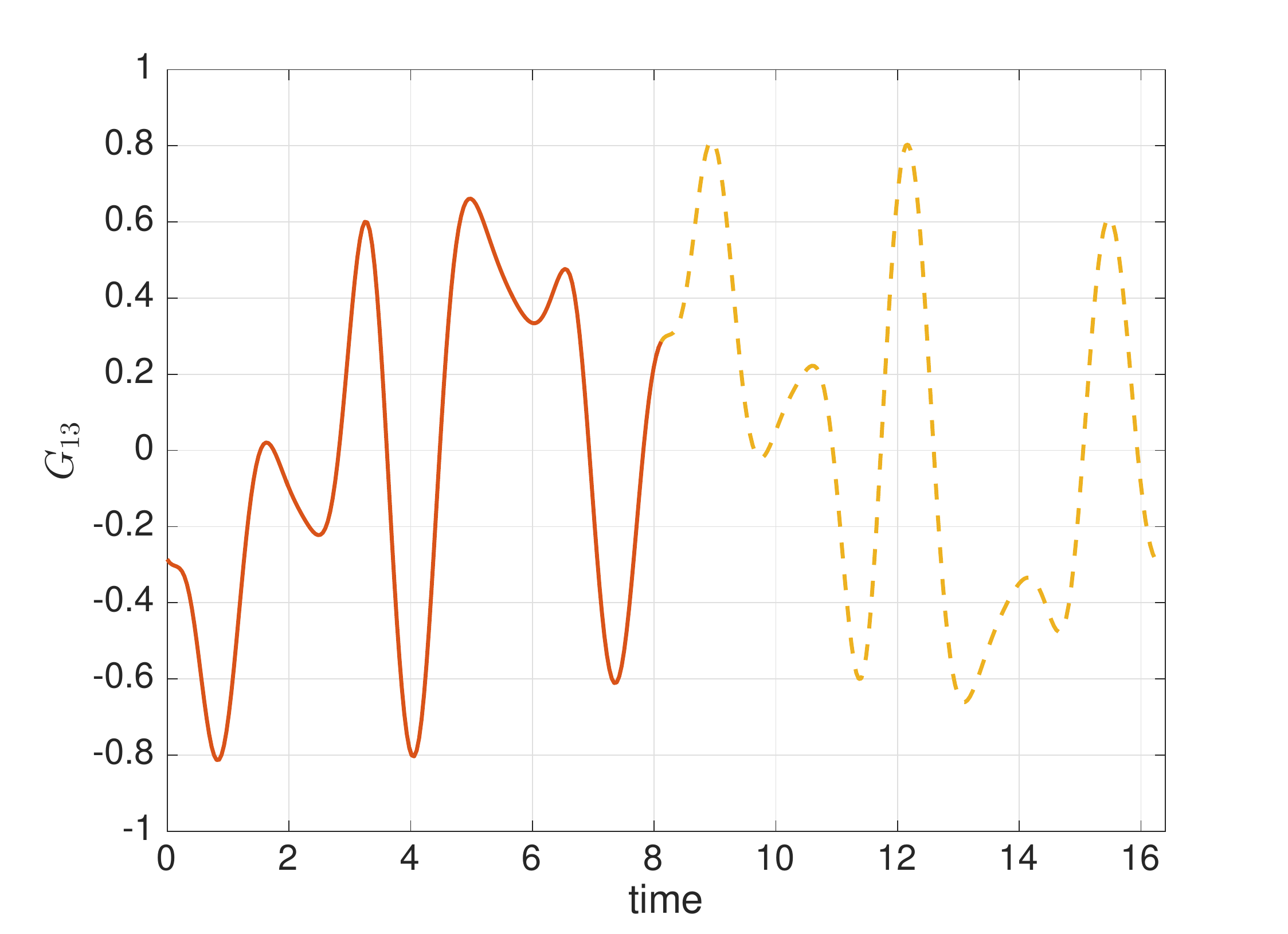}
  \includegraphics[width=0.49\textwidth]{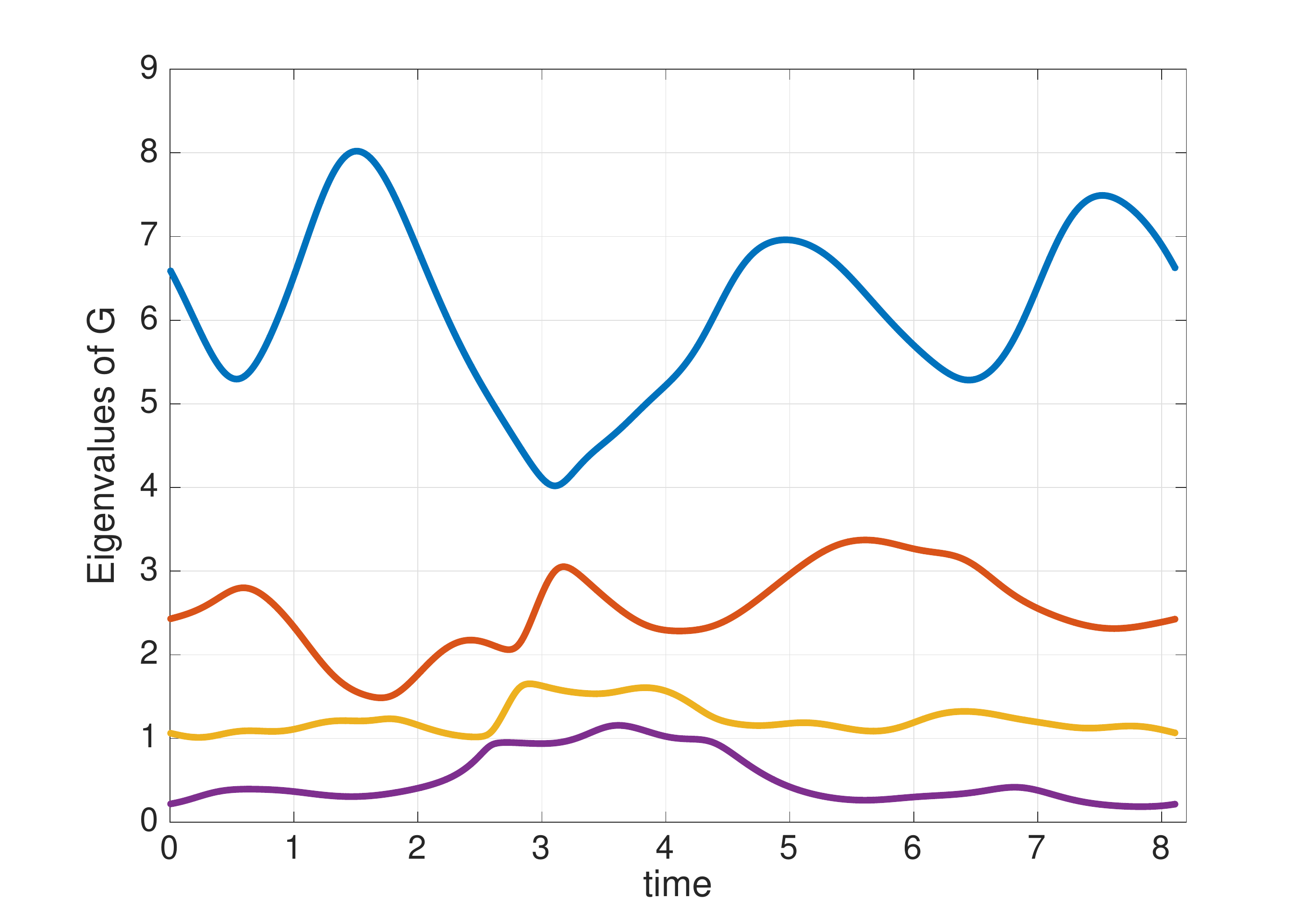}
  \caption{The profile of $(1,3)$ entry of $G(\tau)$,
    $0\leq \tau \leq 2\T$ and the four eigenvalues of $G$.
    $\T=8.1165$.  }
  \label{fig:5D}
\end{figure}

\begin{figure}
	\includegraphics[width=0.48\textwidth]{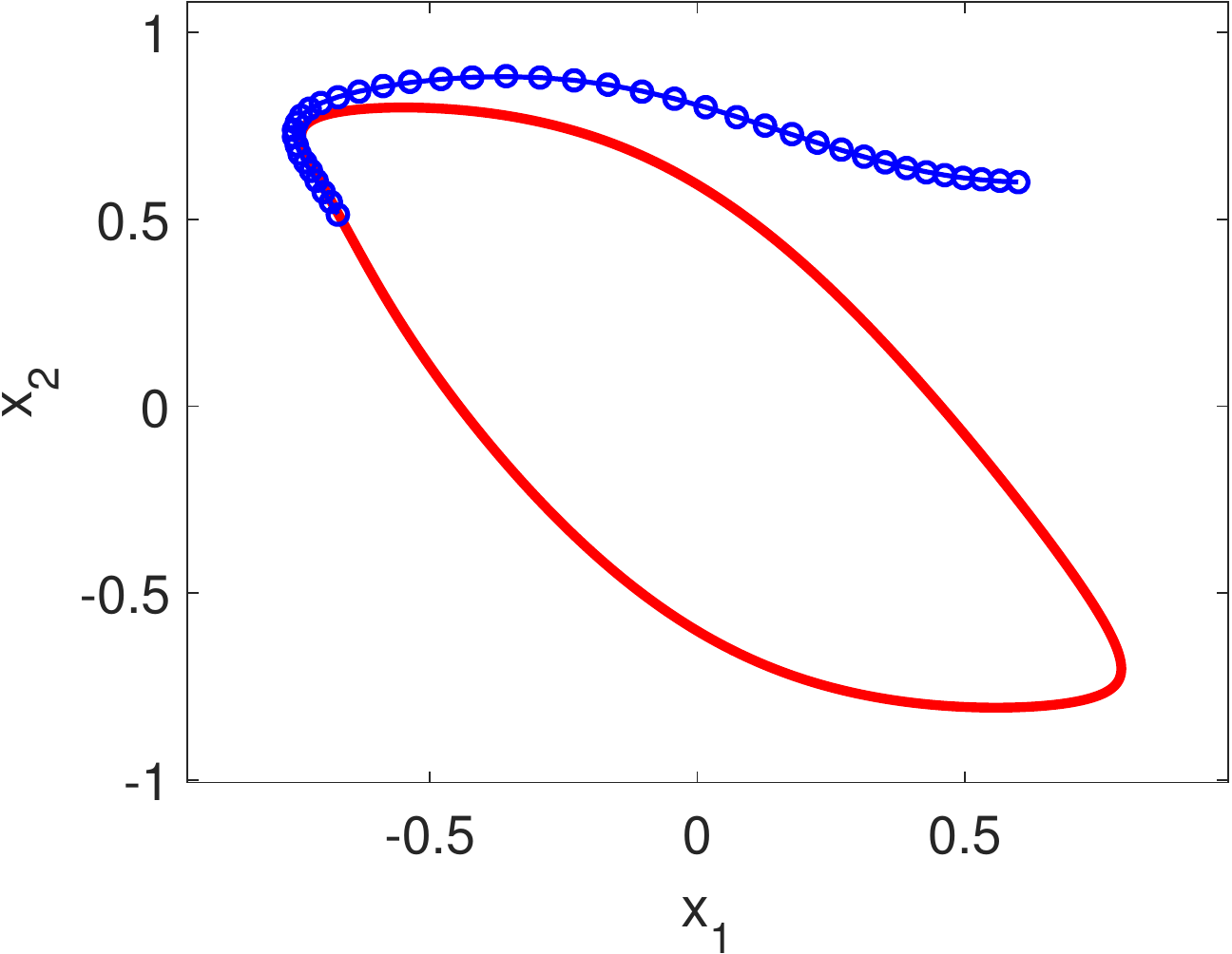}
	\includegraphics[width=0.48\textwidth]{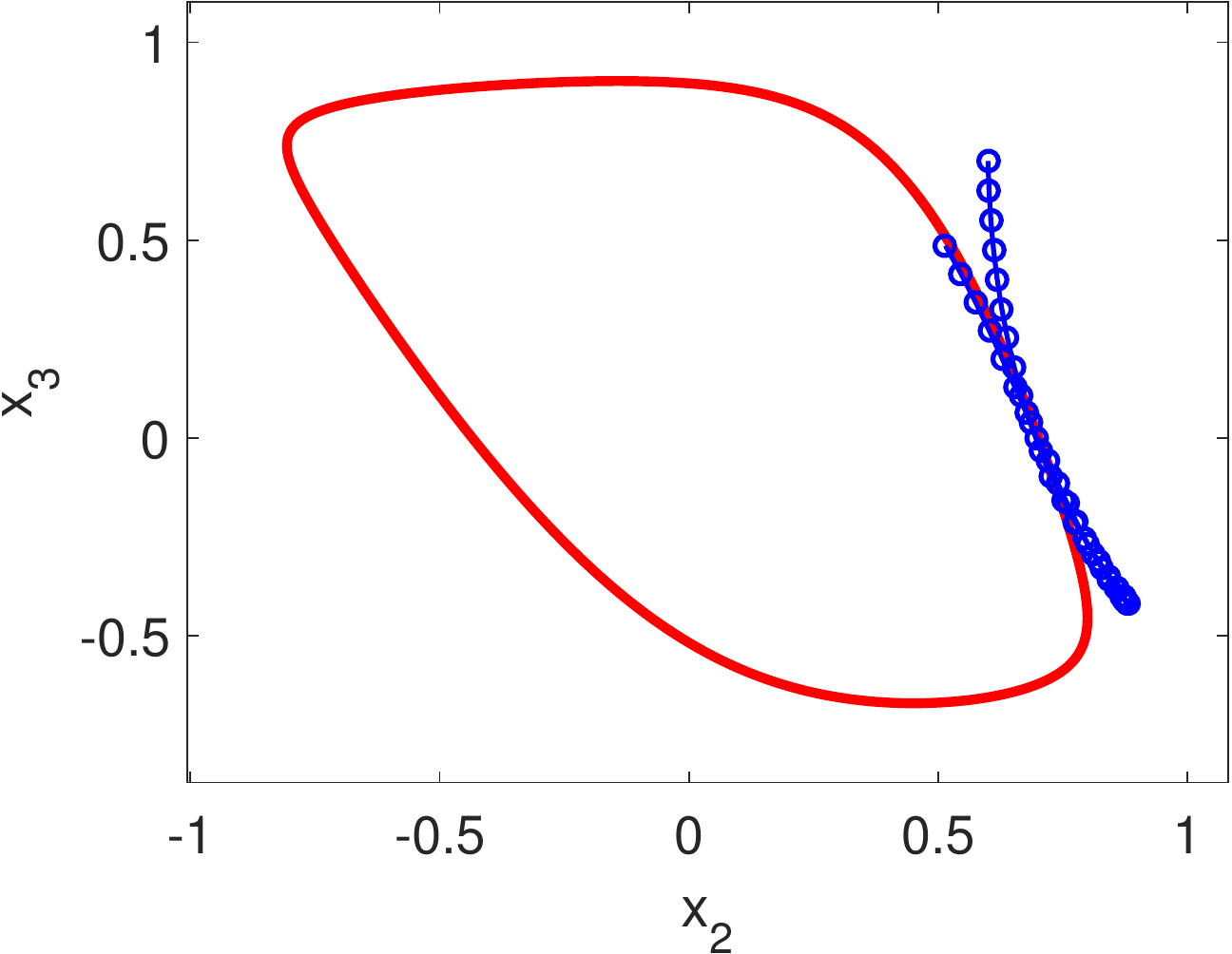}
	\includegraphics[width=0.48\textwidth]{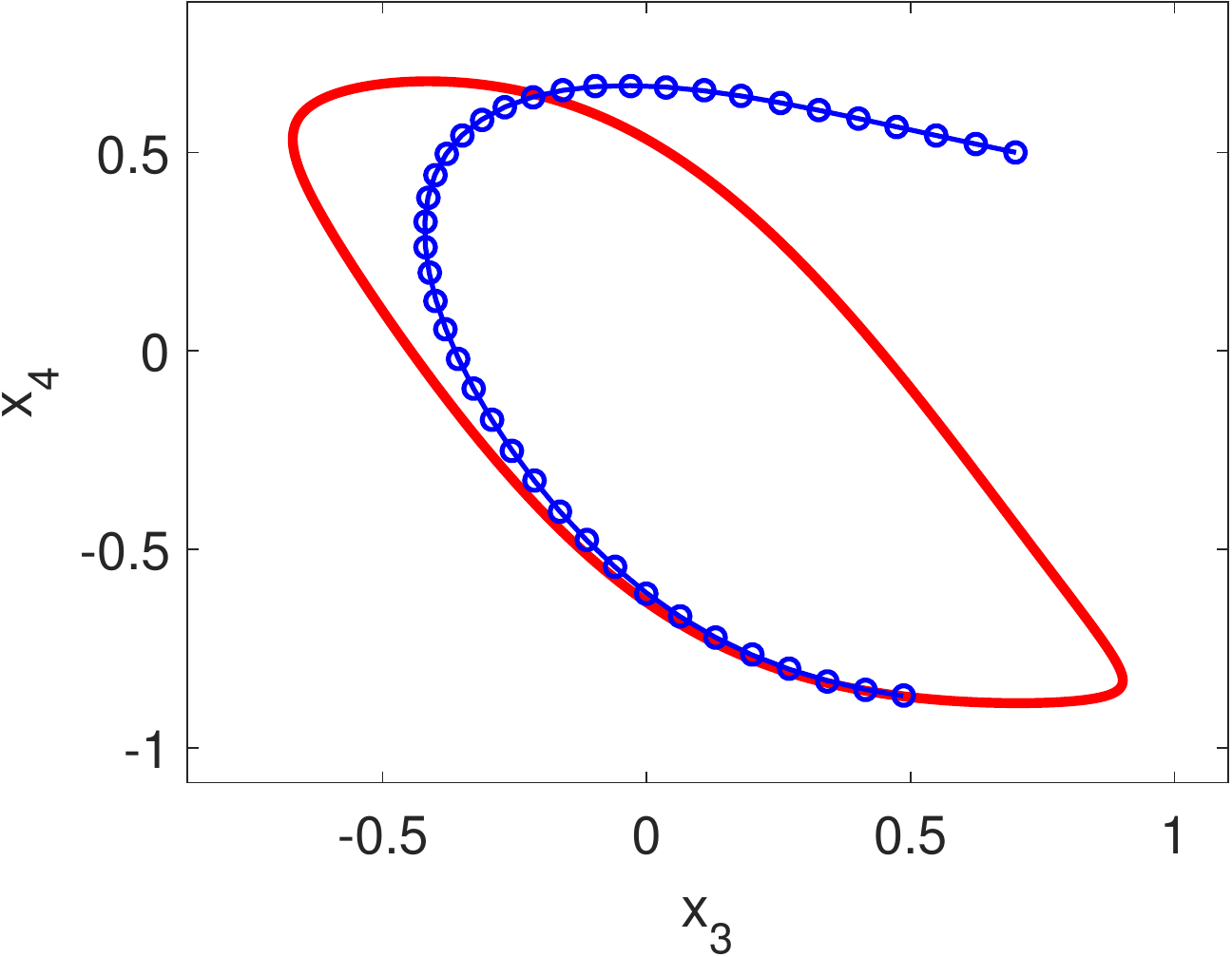}
	\includegraphics[width=0.48\textwidth]{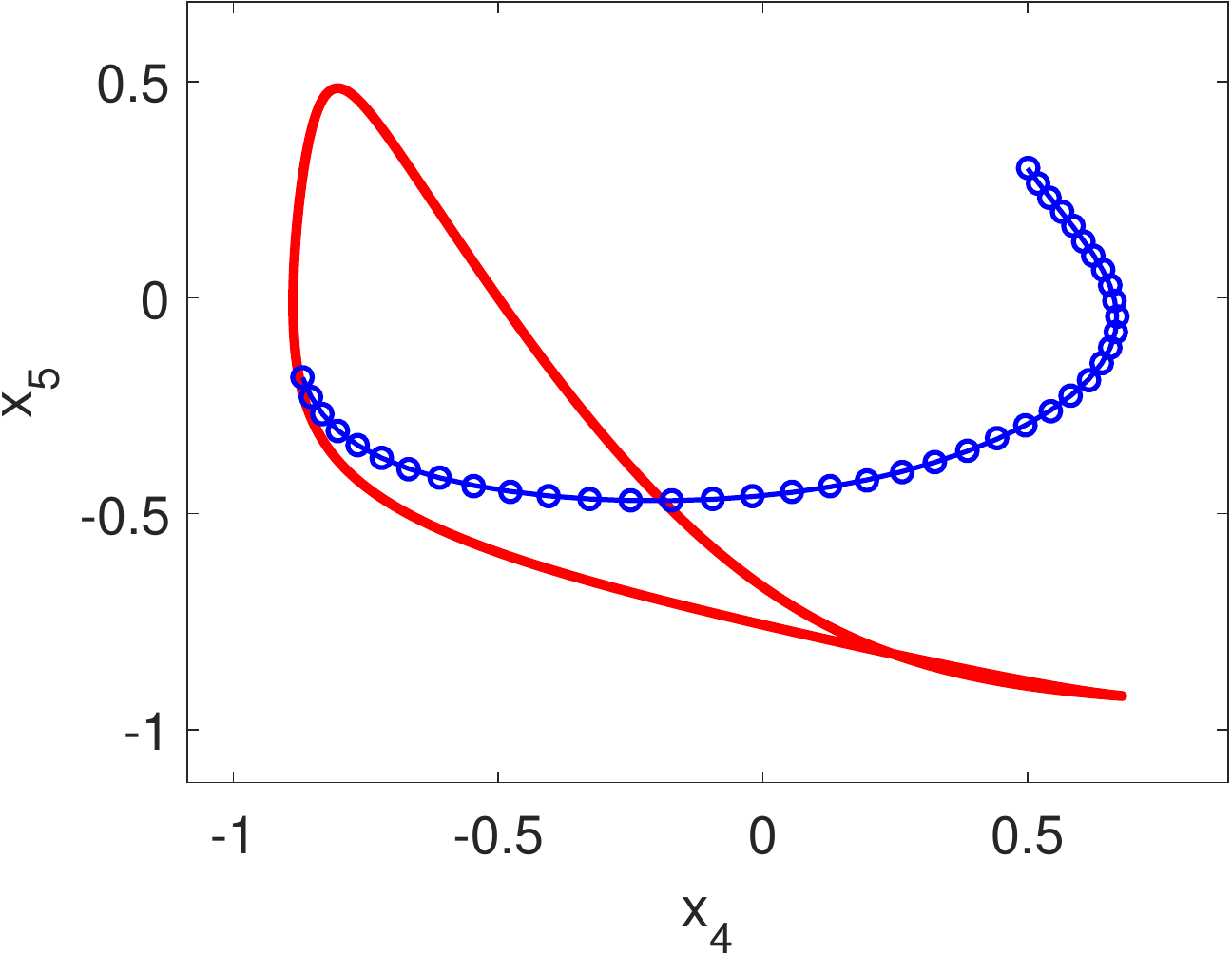}
	\caption{ The MAP from the limit cycle to the point
      $x=(0.6, 0.6, 0.7, 0.5, 0.3)$ calculated using
      gMAM-LQA with $N=40$.  }
	\label{fig:LV5D}
\end{figure}

\section{Conclusion}

We showed how to derive and compute a quadratic approximation of the \qp
near a stable limit cycle in random perturbed systems. The minimum  action method
is improved with the aid of this local approximation  to effectively handle the
infinite length of the optimal path.
With these tools,  one can explore the transition behaviors
related to stochastic oscillatory  in many  high dimensional  problems.

\appendix

\section{}
\label{appendix:A}

\begin{proposition}
  \label{lem:grad}
  The gradient $\nabla f$ of a differential function $f(x)$,
  $x\in\Real^d$, can be written in the curvilinear
  coordinates as
  \[
  \begin{split}
	\nabla f(x)&=  \frac{\partial_\tau f- \sum_{i,j=1}^{d-1}z^j\omega^i_j\partial_{z^i} f}{\lambda(\tau)^{-1}+\sum_{j=1}^{d-1} z^{j}\omega^{0}_j(\tau) }
    e^0(\tau)
    + \sum_{i=1}^{d-1}(\partial_{z^i} f )e^{i}(\tau)\\
	&=\frac{\partial_\tau f-
      \inpd{z}{\tomega(\tau)^\tr\partial_z
        f}}{\lambda(\tau)^{-1}+
      \inpd{e^{0}(\tau)}{\dot{\wt{E}}(\tau) z}} e^0(\tau) +
    \sum_{i=1}^{d-1}(\partial_{z^i} f )e^{i}(\tau),
  \end{split}
  \]
  where $\lambda(\tau)$ is defined in \eqref{eqn:e0},
  $\partial_z f$ denotes the column vector whose components
  are given by $\partial_{z^i} f$.

\end{proposition}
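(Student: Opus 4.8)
The plan is to read off the coefficients of $\nabla f$ in the reciprocal basis $\set{e^0(\tau),\ldots,e^{d-1}(\tau)}$ directly from the chain rule, exploiting the duality relation \eqref{eqn:ee}. Since $\set{e^i(\tau)}$ is dual to $\set{e_i(\tau)}$, every vector $w\in\Real^d$ admits the expansion $w=\sum_{i=0}^{d-1}\inpd{w}{e_i(\tau)}e^i(\tau)$, because pairing both sides with $e_j(\tau)$ and using $\inpd{e^i}{e_j}=\delta^i_j$ recovers the stated coefficients. Applying this to $w=\nabla f$, it therefore suffices to compute the $d$ inner products $\inpd{\nabla f}{e_i(\tau)}$.

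First I would use the Jacobian of the coordinate map \eqref{eqn:coord} already computed in \S\ref{ssec:gradc}. The identities $\partial_{z^i}x=e_i(\tau)$ for $1\leq i\leq d-1$, combined with the chain rule $\partial_{z^i}f=\inpd{\nabla f}{\partial_{z^i}x}$, yield immediately $\inpd{\nabla f}{e_i(\tau)}=\partial_{z^i}f$ for $1\leq i\leq d-1$. These are exactly the coefficients of $e^i(\tau)$ in the claimed formula, so the only remaining unknown is the coefficient $\inpd{\nabla f}{e_0(\tau)}$ of $e^0(\tau)$.

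To extract this last coefficient I would use the $\tau$-derivative. The chain rule gives $\partial_\tau f=\inpd{\nabla f}{\partial_\tau x}$, into which I substitute the expression for $\partial_\tau x$ derived just above, namely $\partial_\tau x=\bigl[\lambda(\tau)^{-1}+\sum_{j=1}^{d-1}z^j\omega^0_j(\tau)\bigr]e_0(\tau)+\sum_{i,j=1}^{d-1}z^j\omega^i_j(\tau)e_i(\tau)$. Expanding the inner product and replacing $\inpd{\nabla f}{e_i}$ for $i\geq 1$ by the already-known values $\partial_{z^i}f$ produces a single scalar equation that is linear in $\inpd{\nabla f}{e_0}$; solving it gives
\[
\inpd{\nabla f}{e_0(\tau)}=\frac{\partial_\tau f-\sum_{i,j=1}^{d-1}z^j\omega^i_j\,\partial_{z^i}f}{\lambda(\tau)^{-1}+\sum_{j=1}^{d-1}z^j\omega^0_j(\tau)},
\]
which is precisely the coefficient of $e^0(\tau)$ in the first displayed form. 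The second displayed form then follows by rewriting $\sum_{i,j}z^j\omega^i_j\partial_{z^i}f=\inpd{z}{\tomega(\tau)^\tr\partial_z f}$ (since $(\tomega)_{ij}=\omega^i_j$) and $\sum_j z^j\omega^0_j=\inpd{e^0(\tau)}{\dot{\wt{E}}(\tau)z}$ (since $\omega^0_j=\inpd{e^0}{\dot e_j}$ and $\dot{\wt{E}}z=\sum_j z^j\dot e_j$).

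The only point requiring care, rather than a genuine obstacle, is that the division in the $e^0$ coefficient is legitimate: the denominator $\lambda(\tau)^{-1}+\sum_j z^j\omega^0_j(\tau)$ equals $\lambda(\tau)^{-1}\neq 0$ at $z=0$ and hence remains nonzero throughout the tubular neighborhood where \eqref{eqn:coord} is a diffeomorphism. This is consistent with, and in effect re-derives, the non-singularity of the coordinate Jacobian noted below \eqref{eqn:coord}, so no additional hypothesis is needed.
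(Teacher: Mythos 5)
Your proof is correct: the paper itself states Proposition \ref{lem:grad} without proof, and the dual-basis expansion $\nabla f=\sum_i\inpd{\nabla f}{e_i}e^i$ combined with the chain rule applied to the Jacobian columns $\partial_\tau x$ and $\partial_{z^i}x=e_i$ (which the paper computes in \S 2.5 precisely for this purpose) is exactly the intended derivation. Your closing remark on the nonvanishing denominator correctly ties it to the nonsingularity of the coordinate Jacobian near $z=0$, so nothing is missing.
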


{\it On proof of Theorem \ref{thm:main}:}

It is clear that
a periodic  positive definite solution to the PRDE \eqref{eqn:Riccati} is the inverse of a periodic  positive definite solution to the
PLDE \eqref{eqn:inverse}, and vice versa.
Hence the proof of   Theorem \ref{thm:main} is mainly based on the following classical result on the
PLDE. To state this result, we  need  introduce another type of asymptotical stability for periodic matrix-valued functions.

\begin{definition}\label{def:char multi}
Let $M(\cdot)$ be a $\T$-periodic matrix-valued continuous function. We say that $M(\cdot)$ is  asymptotically stable
if  all the characteristic multipliers of $M(\cdot)$  lie inside the open unit disk.
\end{definition}

The asymptotical stability of $M(\cdot)$ defined above assures that the periodic linear system $\dot{y}(\tau)=M(\tau)y(\tau)$ has
the trivial identically zero solution as an asymptotically stable solution.

In the following proposition, we establish the connection  between the two types of asymptotic stability  in Definition \ref{def:char multi}
and Definition \ref{def:stability of limit cycle}.

\begin{proposition}\label{lem:equivalence}
$\tM(\cdot)$ defined by \eqref{eqn:M} is asymptotically stable if and only if the limit cycle $\Gamma$ of \eqref{eqn:ODE} is asymptotically stable.
\end{proposition}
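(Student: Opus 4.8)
The plan is to reduce the full $d$-dimensional first variation equation $\dot y=\partial_x b(\gamma(\tau))\,y$ to the $(d-1)$-dimensional system generated by $\tM$ by passing to the moving frame, and then to read off the multipliers by Floquet theory. Writing $J(\tau):=\partial_x b(\gamma(\tau))$ and substituting $y=E(\tau)\eta$ into the variational equation, I would use $\dot E=E\Omega$ from \eqref{eqn:general FS matrix form} to get $\dot\eta=\Pi(\tau)\eta$ with $\Pi:=E^{-1}JE-\Omega$. Since the frame is $\T$-periodic and $E(\tau)$ is invertible for every $\tau$, the substitution $\eta=E(\tau)^{-1}y$ is a Lyapunov transformation, so the characteristic multipliers of \eqref{eqn:first var} coincide with those of $\Pi(\cdot)$.

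The crux is to show that $\Pi$ is block upper triangular with $\tM$ as its lower diagonal block. Entrywise $\Pi_{ij}=\inpd{e^i(\tau)}{J(\tau)e_j(\tau)-\dot e_j(\tau)}$ for $0\le i,j\le d-1$; for $1\le i,j\le d-1$ this is $\tJ_{ij}-\tomega_{ij}=\tM_{ij}$ by \eqref{eqn:defJ}, \eqref{def:omega} and \eqref{eqn:M}. For the first column $j=0$ I would compute $Je_0-\dot e_0$ directly: since $e_0=\lambda\dot\gamma$, $\dot\gamma=b(\gamma)$ and hence $\ddot\gamma=\partial_x b(\gamma)\dot\gamma=J\dot\gamma$,
\[
Je_0-\dot e_0=\lambda J\dot\gamma-\dot\lambda\dot\gamma-\lambda\ddot\gamma=-\dot\lambda\,\dot\gamma=-\frac{\dot\lambda}{\lambda}\,e_0 .
\]
Pairing with $e^i$ and using $\inpd{e^i}{e_0}=\delta^i_0$ shows the first column of $\Pi$ equals $(-\dot\lambda/\lambda,0,\ldots,0)^\tr$, so
\[
\Pi(\tau)=\begin{bmatrix} -\dot\lambda(\tau)/\lambda(\tau) & * \\[2pt] 0 & \tM(\tau)\end{bmatrix}.
\]
This decoupling of the $e_0$-direction is the key structural fact, and the step I expect to be the main obstacle: it is precisely where the defining property of the limit cycle (that $\dot\gamma$ solves the variational equation) enters.

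With the triangular structure in hand the conclusion is immediate. The state transition matrix of a block-triangular generator is block triangular with the transition matrices of the diagonal blocks on its diagonal, so the monodromy matrix $\bar\Phi_\Pi(0)$ is upper triangular with diagonal blocks $\exp\!\big(-\int_0^\T \dot\lambda/\lambda\,\d\tau\big)$ and $\bar\Phi_{\tM}(0)$. Because $\lambda$ is $\T$-periodic the scalar block equals $\exp(-[\ln\lambda]_0^\T)=1$, which is exactly the trivial multiplier guaranteed after \eqref{eqn:first var}. Hence the eigenvalues of $\bar\Phi_\Pi(0)$ are $1$ together with those of $\bar\Phi_{\tM}(0)$, so the $d-1$ \emph{nontrivial} characteristic multipliers of \eqref{eqn:first var} are precisely the $d-1$ characteristic multipliers of $\tM$. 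Comparing Definition \ref{def:stability of limit cycle} with Definition \ref{def:char multi}, all multipliers of $\tM$ lie in the open unit disk if and only if the nontrivial multipliers of \eqref{eqn:first var} do, i.e.\ if and only if $\Gamma$ is asymptotically stable; the remaining steps are routine Floquet bookkeeping.
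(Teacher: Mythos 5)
Your proof is correct and follows essentially the same route as the paper: both pass to the moving frame via $y=E(\tau)\eta$, identify the transformed generator $E^{-1}\partial_x b(\gamma)E-\Omega$ whose lower-right block is $\tM$, and use the resulting block-triangular structure to split the trivial multiplier $1$ off from the spectrum of $\bar{\Phi}_{\tM}(0)$. The only (harmless) difference is that you establish the triangularity pointwise at the level of the generator by computing $Je_0-\dot e_0=-(\dot\lambda/\lambda)e_0$, whereas the paper establishes it for the transition matrix $\Theta(\tau)=E(\tau)^{-1}\Phi_{\partial_x b(\gamma(\cdot))}(\tau,0)E(0)$ via the observation that $\dot\gamma$ solves the first-variation equation, and then infers $M_{21}\equiv 0$ from the ODE for $\Theta$ --- the same key fact entering in a slightly different order.
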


\begin{proof}
We will proof this proposition by showing that the characteristic multipliers associated with \eqref{eqn:first var}
consist of 1 and the characteristic multipliers associated with $\tM(\tau)$, i.e.,
the eigenvalues of $\bar{\Phi}_{\partial_x b(\gamma(\cdot))}(0)$ consist of 1 and the eigenvalues of $\bar{\Phi}_\tM(0)$.

Let $\Theta(\tau)=E(\tau)^{-1}\Phi_{\partial_x b(\gamma(\cdot))}(\tau,0)E(0)$.
We claim that $\Theta(\tau)$ is a block upper triangular matrix which has the form
$
\Theta(\tau)=\begin{bmatrix}
\Theta_{11}(\tau) & \Theta_{12}(\tau)\\
0 & \Theta_{22}(\tau)
\end{bmatrix},
$
where $\Theta_{11}(\tau)$ is a scalar and is equal to $\lambda(0)/\lambda(\tau)$.
To see this, we note that
the first column of $\Theta(\tau)$ is given by $E(\tau)^{-1}\Phi_{\partial_x b(\gamma(\cdot))}(\tau,0)e_0(0)$,
and the key observation is that
\begin{equation}\label{eqn:observation}
\Phi_{\partial_x b(\gamma(\cdot))}(\tau,0)e_0(0)=\frac{\lambda(0)}{\lambda(\tau)}e_0(\tau).
\end{equation}
To justify \eqref{eqn:observation}, recall that $e_0(\tau)=\lambda(\tau)\dot{\gamma}(\tau)$ and $\dot{\gamma}(\tau)$ solves
the equation \eqref{eqn:first var} of first variation.
Since $\Phi_{\partial_x b(\gamma(\cdot))}(\tau,0)$, by definition, is  the fundamental matrix solution of \eqref{eqn:first var}
and $\Phi_{\partial_x b(\gamma(\cdot))}(0,0)=I$, we infer from the uniqueness of the solution to the initial value problem of \eqref{eqn:first var}
that
\[
\dot{\gamma}(\tau)=\Phi_{\partial_x b(\gamma(\cdot))}(\tau,0)\dot{\gamma}(0),
\]
and \eqref{eqn:observation} follows immediately.

Next, we will show that $\Theta_{22}(\tau)=\Phi_{\tM}(\tau,0)$.
To this end, we will derive the ODE for $\Theta(\tau)$.
Using \eqref{eqn:general FS matrix form}, we have
\[
\frac{\d}{\d\tau}E(\tau)^{-1}=-E(\tau)^{-1}\dot{E}(\tau)E(\tau)^{-1}=-\Omega(\tau)E(\tau)^{-1}.
\]
Recall that  $\Phi_{\partial_x b(\gamma(\cdot))}(\tau,0)$ satisfies
\[
\frac{\partial }{\partial \tau}\Phi_{\partial_x b(\gamma(\cdot))}(\tau,0)=\partial_x b(\gamma(\tau))\Phi_{\partial_x b(\gamma(\cdot))}(\tau,0).
\]
Hence we find
\begin{align}
\dot{\Theta}(\tau)
&=-\Omega(\tau)E(\tau)^{-1}\Phi_{\partial_x b(\gamma(\cdot))}(\tau,0)E(0)+E(\tau)^{-1}
\partial_x b(\gamma(\tau))\Phi_{\partial_x b(\gamma(\cdot))}(\tau,0)E(0)\nonumber\\
&=-\Omega(\tau)\Theta(\tau)+E(\tau)^{-1}\partial_x b(\gamma(\tau))E(\tau)\Theta(\tau)\nonumber\\
&=M(\tau)\Theta(\tau),\label{eqn:ODE for Theta}
\end{align}
where
$
M(\tau):=E(\tau)^{-1}\partial_x b(\gamma(\tau))E(\tau)-\Omega(\tau).
$
Comparing the last equation with \eqref{eqn:M}, we see that
$\tM(\tau)$ is the principal submatrix of $M(\tau)$ by deleting the first row and  the first column.
Partition $M(\tau)$ into blocks that are compatible with the partitions of $\Theta(\tau)$
 and then \eqref{eqn:ODE for Theta} can be written in the blocked form as
\begin{align*}&
\begin{bmatrix}
\dot{\Theta}_{11}(\tau) & \dot{\Theta}_{12}(\tau)\\
0 & \dot{\Theta}_{22}(\tau)
\end{bmatrix}=\begin{bmatrix}
M_{11}(\tau) & M_{12}(\tau)\\
M_{21}(\tau) & \tM(\tau)
\end{bmatrix}\begin{bmatrix}
\Theta_{11}(\tau) & \Theta_{12}(\tau)\\
0 & \Theta_{22}(\tau)
\end{bmatrix}\\
&=\begin{bmatrix}
M_{11}(\tau)\Theta_{11}(\tau) & M_{11}(\tau)\Theta_{12}(\tau)+M_{12}(\tau)\Theta_{22}(\tau)\\
M_{21}(\tau)\Theta_{11}(\tau) & M_{21}(\tau)\Theta_{12}(\tau)+\tM(\tau)\Theta_{22}(\tau)
\end{bmatrix}.
\end{align*}
Since $\Theta_{11}(\tau)=\lambda(0)/\lambda(\tau)\neq 0$, we deduce that $M_{21}(\tau)=0$ and  then it follows that
$
\dot{\Theta}_{22}(\tau)=\tM(\tau)\Theta_{22}(\tau).
$
Also,  we have $\Theta(0)=E(0)^{-1}\Phi_{\partial_x b(\gamma(\cdot))}(0,0)E(0)=I$, so $\Theta_{22}(\tau)$
is a $(d-1)\times(d-1)$ identity matrix. Again, by the uniqueness of the solution to the initial value problem,
we obtain $\Theta_{22}(\tau)=\Phi_{\tM}(\tau,0)$.

Since $\lambda(\tau)$ and $E(\tau)$ are $\T$-periodic, we have
\[
\Theta(\T)=E(\T)^{-1}\Phi_{\partial_x b(\gamma(\cdot))}(\T,0)E(0)=E(0)^{-1}\bar{\Phi}_{\partial_x b(\gamma(\cdot))}(0)E(0),
\]
and
\[
\Theta(\T)=\begin{bmatrix}
\lambda(0)/\lambda(\T) & \Theta_{12}(\T)\\
0 & \Phi_{\tM}(\T,0)
\end{bmatrix}
=\begin{bmatrix}
1 & \Theta_{12}(\T)\\
0 & \bar{\Phi}_{\tM}(0)
\end{bmatrix}.
\]
From the last two equations, we conclude that
 $\bar{\Phi}_{\partial_x b(\gamma(\cdot))}(0)$ and $\Theta(\T)$ have the same eigenvalues since they are similar,
and the  eigenvalues of $\Theta(\T)$ consist of 1 and the eigenvalues of $\bar{\Phi}_\tM(0)$.
This justifies what we asserted and the proof is complete.
\end{proof}

\begin{remark}
In Definition \ref{def:stability of limit cycle},
the asymptotical stability of the limit cycle $\Gamma$ is clearly defined  without using the moving affine frame
$e_i(\tau)$, $0\leq i\leq d-1$. Thus in view of Proposition \ref{lem:equivalence},
the asymptotical stability of $\tM(\cdot)$ is also a property independent of the choice of
the moving affine frame
$e_i(\tau)$, $0\leq i\leq d-1$, although
the matrix $\tM(\cdot)$ defined by \eqref{eqn:M} depends explicitly on  the moving affine frame
$e_i(\tau)$, $0\leq i\leq d-1$.
\end{remark}

Now Theorem \ref{thm:main}
 can be easily derived from  Proposition \ref{lem:equivalence}, Lemma \ref{lem:control} and the following classical results   \cite{Bolzern1988}.
%
%
%

\begin{proposition}[Theorem 20 in \cite{Bolzern1988}]\label{thm:well-posed}
For any $\T>0$, let  $M(\cdot)$, $N(\cdot)$ be  two $\T$-periodic matrix-valued functions
with size $n\times n$ and  $n\times m$ respectively.
Then the following    PLDE
\[
\dot{P}(\tau)=M(\tau)P(\tau)+P(\tau)M(\tau)^{\tr}+N(\tau)N(\tau)^\tr
\]
admits a unique $\T$-periodic positive definite solution $P(\tau)$ if and only if the following two conditions hold:
\begin{enumerate}
\item[(i)] $M(\cdot)$ is asymptotically stable;
\item[(ii)] $(M(\cdot),N(\cdot))$ is controllable.
\end{enumerate}
\end{proposition}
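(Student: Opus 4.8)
The plan is to reduce the periodic ODE problem to an algebraic (discrete-time) Lyapunov equation attached to the monodromy matrix, and then invoke the classical discrete Lyapunov theory. Since the PLDE is linear, any solution is completely determined by its value $P(\tau_0)$ at one instant through the variation-of-constants formula
\[
P(\tau)=\Phi_M(\tau,\tau_0)\,P(\tau_0)\,\Phi_M(\tau,\tau_0)^{\tr}+W(\tau,\tau_0),
\]
exactly as in \eqref{eqn:Hsol}--\eqref{eqn:W} with $\tA=NN^{\tr}$. A solution is $\T$-periodic if and only if $P(\tau_0+\T)=P(\tau_0)$, which, on writing $\Psi:=\bar{\Phi}_M(\tau_0)$ and $Q:=W(\tau_0+\T,\tau_0)\succeq 0$, is equivalent to the discrete Stein equation
\[
P(\tau_0)-\Psi\,P(\tau_0)\,\Psi^{\tr}=Q.
\]
Hence the existence and uniqueness of a $\T$-periodic positive definite solution of the PLDE is equivalent to the existence and uniqueness of a positive definite solution $P_0=P(\tau_0)$ of this Stein equation.

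For sufficiency, I would argue as follows. Under (i) every characteristic multiplier of $M(\cdot)$, hence every eigenvalue of $\Psi$, lies in the open unit disk. The spectrum of the linear operator $X\mapsto \Psi X\Psi^{\tr}$ consists of the products $\lambda_i\lambda_j$ of eigenvalues of $\Psi$, all of modulus strictly less than one, so $X\mapsto X-\Psi X\Psi^{\tr}$ is invertible and the Stein equation has the unique, norm-convergent solution
\[
P_0=\sum_{k=0}^{\infty}\Psi^{k}\,Q\,(\Psi^{\tr})^{k}.
\]
Since $Q\succeq 0$ this gives $P_0\succeq 0$, and $P_0\succ 0$ precisely when no nonzero vector $v$ satisfies $Q^{1/2}(\Psi^{\tr})^{k}v=0$ for all $k$; by Cayley--Hamilton this is the Kalman controllability of the constant pair $(\Psi,Q)$. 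Using Definition \ref{def:control}, Lemma \ref{lem:control} and Proposition \ref{prop:control}, the controllability of $(\bar{\Phi}_M(\tau_0),W(\tau_0+\T,\tau_0))$ is equivalent to condition (ii), so $P_0\succ 0$ and a unique positive definite $\T$-periodic solution exists.

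For necessity, suppose a unique $\T$-periodic positive definite solution exists; then the associated $P_0\succ 0$ solves the Stein equation. Its positive definiteness immediately forces $(\Psi,Q)$ to be controllable, giving (ii). To recover (i) I would use $P_0$ as a Lyapunov certificate for the adjoint map $v\mapsto \Psi^{\tr}v$: the identity $v^{\tr}P_0 v-(\Psi^{\tr}v)^{\tr}P_0(\Psi^{\tr}v)=v^{\tr}Qv\geq 0$ shows that $v_k^{\tr}P_0 v_k$ is nonincreasing along $v_{k+1}=\Psi^{\tr}v_k$, and controllability excludes the uncontrollable modes on which this quantity would stay constant, forcing all eigenvalues of $\Psi$ strictly inside the unit disk, i.e. asymptotic stability of $M(\cdot)$.

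The main obstacle is precisely this necessity of (i): turning the monotone Lyapunov inequality, together with controllability, into the strict spectral bound $|\lambda_i|<1$ requires care to rule out eigenvalues lying exactly on the unit circle, which is the delicate point of the classical argument. A secondary, more bookkeeping, difficulty is the exact equivalence between the periodic controllability of $(M(\cdot),N(\cdot))$ in Definition \ref{def:control} and the Kalman controllability of the constant pair $(\Psi,Q)$ built from the finite-horizon Gramian, which is supplied by Lemma \ref{lem:control} and Proposition \ref{prop:control}. Since the statement is Theorem 20 of \cite{Bolzern1988}, one may alternatively invoke it directly; the sketch above is the self-contained periodic analogue of the standard Lyapunov equation theory.
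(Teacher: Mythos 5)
First, a point of comparison: the paper does not prove this proposition at all --- it is imported verbatim as Theorem 20 of \cite{Bolzern1988} and used as a black box in the proof of Theorem \ref{thm:main}, so there is no in-paper argument to match yours against. Your reduction of the PLDE to the discrete Stein equation $P_0-\Psi P_0\Psi^{\tr}=Q$ at the monodromy matrix is the standard route, and your sufficiency half is sound and essentially complete: under (i) the Stein operator is invertible, the series $P_0=\sum_k\Psi^kQ(\Psi^{\tr})^k$ converges to the unique solution, and its positive definiteness is exactly Kalman controllability of $(\Psi,Q^{1/2})$, which Lemma \ref{lem:control} and Proposition \ref{prop:control} translate into condition (ii).

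The necessity direction, however, contains a genuine gap, and it is not quite where you locate it. You assert that positive definiteness of $P_0$ ``immediately forces $(\Psi,Q)$ to be controllable.'' It does not. If $u$ is a left eigenvector of $\Psi$ with $u\Psi=\lambda u$ and $uQ=0$, the Stein equation gives only $(1-\abs{\lambda}^2)\,uP_0u^{*}=0$, which contradicts positive definiteness only when $\abs{\lambda}\neq 1$. A concrete counterexample to the ``immediate'' claim is $M\equiv 0$, $N\equiv 0$: every constant positive definite matrix is a $\T$-periodic positive definite solution, yet the pair is uncontrollable and $M$ is not asymptotically stable. What rescues the theorem there is the \emph{uniqueness} hypothesis, which your argument never actually invokes. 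The correct order is: (a) existence of a positive definite $P_0$ forces $\abs{\lambda}\leq 1$ for every eigenvalue of $\Psi$, by the same eigenvector computation with $uQu^{*}\geq 0$; (b) if some $\abs{\lambda}=1$, the homogeneous Stein equation $X=\Psi X\Psi^{\tr}$ has a nonzero real symmetric solution (for instance $vv^{*}+\bar{v}v^{\tr}$ with $\Psi v=\lambda v$), so $P_0+\epsilon X$ yields a second $\T$-periodic positive definite solution for small $\epsilon$, contradicting uniqueness; hence (i) holds; (c) only now does an uncontrollable left eigenvector, necessarily with $\abs{\lambda}<1$, force $uP_0u^{*}=0$ and contradict positive definiteness, giving (ii). With that reordering your sketch becomes a complete self-contained proof; alternatively one simply cites \cite{Bolzern1988}, which is what the paper does.
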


\begin{proposition}[Proposition 9 in \cite{Bolzern1988}]\label{prop:control}
Let  $M(\cdot)$, $N(\cdot)$ be given $n\times n$, $n\times m$ $\T$-periodic matrices respectively.
For any $\tau_0\in [0,\T)$, let $ {D}(\tau_0)$ be a matrix such that
\[
 {D}(\tau_0) {D}(\tau_0)^\tr=\int_{\tau_0}^{\tau_0+\T} \Phi_M(\tau_0+\T,s)N(s)N(s)^\tr\Phi_M(\tau_0+\T,s)^{\tr}\,\mathrm{d} s.
\]
Then the pair  $(M(\cdot),N(\cdot))$ is controllable if and only if the  time-invariant pair
$(\bar{\Phi}_M(\tau_0),  {D}(\tau_0))$ is controllable.
\end{proposition}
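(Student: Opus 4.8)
The plan is to recast both controllability statements as Popov--Belevitch--Hautus (PBH) eigenvector tests on the relevant monodromy matrix and then show the two tests are logically equivalent. The bridge between them is the one-period reachability Gramian $W_{\tau_0}:=D(\tau_0)D(\tau_0)^{\tr}$, whose integral form is exactly the right-hand side defining $D(\tau_0)$, together with a quasi-periodicity identity for the covector that would witness a failure of controllability. I would argue by contraposition, characterizing when each pair is \emph{not} controllable.

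First I would unwind the time-invariant side. By the PBH test, the constant pair $(\bar{\Phi}_M(\tau_0),D(\tau_0))$ fails to be controllable if and only if there is a nonzero left eigenvector $v$ of $\bar{\Phi}_M(\tau_0)$, say $v\bar{\Phi}_M(\tau_0)=\mu v$, with $vD(\tau_0)=0$. Because $W_{\tau_0}=D(\tau_0)D(\tau_0)^{\tr}$ is Hermitian positive semidefinite, $vD(\tau_0)=0$ is equivalent to $vW_{\tau_0}v^{*}=0$, and substituting the integral expression for $W_{\tau_0}$ forces the nonnegative integrand to vanish identically, i.e. $v\Phi_M(\tau_0+\T,s)N(s)=0$ for all $s\in[\tau_0,\tau_0+\T]$. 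Here $v^{*}$ is the conjugate transpose; since eigenvectors may be complex, the Gramian nullspace step is run over the Hermitian form.

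The heart of the argument is to transport this annihilation condition to the reference time $0$ and match it against Definition \ref{def:control}. Using the similarity $\bar{\Phi}_M(\tau_0)=\Phi_M(\tau_0,0)\bar{\Phi}_M(0)\Phi_M(\tau_0,0)^{-1}$ from Definition \ref{def:Phi}, the covector $u:=v\Phi_M(\tau_0,0)$ is a left eigenvector of $\bar{\Phi}_M(0)$ for the same eigenvalue $\mu$. Writing $\Phi_M(\tau_0+\T,s)=\bar{\Phi}_M(\tau_0)\Phi_M(\tau_0,s)$, peeling off the factor $\mu\neq 0$ (monodromy matrices are invertible), and using $\Phi_M(\tau_0,s)=\Phi_M(\tau_0,0)\Phi_M(s,0)^{-1}$, the condition becomes $g(s):=u\Phi_M(s,0)^{-1}N(s)=0$ on $[\tau_0,\tau_0+\T]$. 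The period-shift relation $\Phi_M(s+\T,0)=\Phi_M(s,0)\bar{\Phi}_M(0)$ together with $N(s+\T)=N(s)$ then yields $g(s+\T)=\mu^{-1}g(s)$; since $\mu\neq 0$, $g$ vanishes on one length-$\T$ window if and only if it vanishes on every such window, in particular on $[0,\T]$. Thus the existence of such a $u$ is precisely the failure of controllability of $(M(\cdot),N(\cdot))$ in Definition \ref{def:control}, and taking contrapositives gives the stated equivalence.

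The main obstacle I expect is purely the transition-matrix bookkeeping across shifted intervals: keeping the cocycle identity $\Phi_M(t,s)=\Phi_M(t,r)\Phi_M(r,s)$, the period shift $\Phi_M(t+\T,s+\T)=\Phi_M(t,s)$, and the inverse relations consistent so that the quasi-periodicity of $g$ emerges cleanly, and being careful to run the Gramian step over the complex field so that the single covector $v$ serves simultaneously as eigenvector and annihilator. Once these identities are assembled the equivalence is immediate, and no regularity beyond continuity of $M(\cdot)$ and $N(\cdot)$ is needed.
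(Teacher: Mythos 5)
Your argument is correct and complete. Note, however, that the paper does not prove this statement at all: it is imported verbatim as Proposition 9 of the cited reference \cite{Bolzern1988} and used as a black box to weaken the controllability hypothesis in Theorem \ref{thm:main}. So there is no in-paper proof to compare against; what you have supplied is a self-contained justification of the citation.

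On the merits, your chain of reductions is sound. The PBH characterization of the discrete-time pair $(\bar{\Phi}_M(\tau_0),D(\tau_0))$, the passage from $vD(\tau_0)=0$ to the vanishing of the Hermitian form $vW_{\tau_0}v^{*}$ and hence of the continuous nonnegative integrand $\bigl|v\,\Phi_M(\tau_0+\T,s)N(s)\bigr|^{2}$, the transport of the eigenvector via the similarity $\bar{\Phi}_M(\tau_0)=\Phi_M(\tau_0,0)\bar{\Phi}_M(0)\Phi_M(\tau_0,0)^{-1}$ recorded in Definition \ref{def:Phi}, and the quasi-periodicity $g(s+\T)=\mu^{-1}g(s)$ used to move the annihilation window from $[\tau_0,\tau_0+\T]$ to $[0,\T]$ are all correct, and you rightly flag the two points where care is needed: $\mu\neq 0$ because monodromy matrices are invertible, and the Gramian step must be run over $\mathbb{C}$ since the witnessing eigenvector may be complex. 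Both implications of the equivalence follow by reversing the same steps, so the contrapositive argument closes. This is essentially the standard reachability-Gramian proof one would find in the periodic-systems literature, and it matches the paper's Definition \ref{def:control} exactly.
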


\bibliographystyle{siam} 

\bibliography{./gad,./ms}

\end{document}